\documentclass{amsart}

\usepackage{amssymb,bbm,mathdots,mathrsfs}
\usepackage{hyperref}
\usepackage{enumitem}
\usepackage{graphicx,psfrag,subfigure}
\usepackage{tikz}
\usetikzlibrary{arrows.meta,patterns}

\usepackage{todonotes}

\newtheorem{theorem}{Theorem}[section]
\newtheorem{lemma}[theorem]{Lemma}
\newtheorem{corollary}[theorem]{Corollary}

{\theoremstyle{definition}
\newtheorem{definition}[theorem]{Definition}
\newtheorem{example}[theorem]{Example}
\newtheorem{remark}{Remark}}

\newcommand{\Z}{\mathbb{Z}}
\newcommand{\R}{\mathbb{R}}
\newcommand{\C}{\mathbb{C}}
\newcommand{\ind}{\mathbbm{1}}

\graphicspath{{Fig/}}

\begin{document}

\title[On the real Jacobian conjecture]{A sharp degree bound in the real Jacobian conjecture}

\author[F. Braun, J. Gwo\'zdziewicz, F. Fernandes \MakeLowercase{and} B. Or\'{e}fice-Okamoto]
{Francisco Braun$^{*}$,  Filipe Fernandes$^\dagger$, Janusz Gwo\'zdziewicz$^{\ddagger}$, \MakeLowercase{and} Bruna Or\'{e}fice-Okamoto$^*$}

\address{$^{*}$ Departamento de Matem\'{a}tica, Universidade Federal de S\~ao Carlos, 13565--905 S\~ao Carlos, S\~ao Paulo, Brazil}
\email{franciscobraun@ufscar.br}
\email{brunaorefice@ufscar.br}

\address{$^\dagger$ Universidade do Distrito Federal Jorge Amaury, 70635--815 Bras\'ilia, Distrito Fe\-de\-ral, Brazil}
\email{filipematonb@gmail.com} 

\address{$^{\ddagger}$ Institute of Mathematics\\ 
University of the National Education Commission, Kra\-kow\\
Podchor\c a{\accent95 z}ych 2\\
PL-30-084 Cracow, Poland}
\email{janusz.gwozdziewicz@uken.krakow.pl}

\subjclass[2020]{Primary: 14R15; Secondary: 14H20, 37C10, 34C05.}

\keywords{Real Jacobian conjecture, Jacobian mates, Newton polygon, hyperbolic sectors}

\date{\today}

\begin{abstract}
Let $F=(p,q):\mathbb R^2\to \mathbb R^2$ be a polynomial map with nowhere zero Jacobian determinant. 
A long-standing problem is to determine the largest integer $k$ such that the condition $\deg p\le k$ guarantees the global injectivity of $F$. 
Although several partial results have been obtained over the past $30$ years, the sharp degree bound has remained unknown.
In this paper, we prove that $F$ is injective whenever $\deg p=6$. 
On the other hand, we construct a non-injective polynomial map with nowhere vanishing Jacobian determinant for which $\deg p=7$. 
Combined with the previously known injectivity results for $\deg p\le 5$, our results completely settle the problem and establish the optimal degree bound. More precisely, we show that $7$ is the minimal degree for which non-injective examples can occur.
\end{abstract}

\maketitle

\section*{Introduction}
Let $F=(p,q):\R^2\to\R^2$ be a smooth map such that
\begin{equation}\label{1}
JF(x,y)\neq 0 \ \forall (x,y)\in\R^2, 
\end{equation}
where $JF(x,y)$ stands for the Jacobian determinant of $F$ at $(x,y)$. 
By the Inverse Function Theorem, $F$ is a local diffeomorphism but not necessarily a global one. 

If we assume in addition that $F$ is a polynomial map, the \emph{real Jacobian conjecture} states that~\eqref{1} implies that $F$ is a global diffeomorphism. 
This conjecture is not true: in 1994 Pinchuk \cite{P} constructed a class of non-injective polynomial maps $F = (p,q)$ satisfying~\eqref{1}. 
In this class $p$ is a fixed polynomial of degree $10$ and $q$ varies, but the smallest degree it can have is $25$, see \cite{C} for more details. 

On the other hand, if $p$ has degree $1$, then we can assume $p(x,y) = x$ and $q_y(x,y) \neq 0$, so that $y \mapsto q(x,y)$ is injective for each $x$ and then the real Jacobian conjecture holds for $F$. 
Let $k$ be the positive integer such that the real Jacobian conjecture is true with the additional assumption that the degree of $p$ is less than $k$ and such that there exists a counterexample with $\deg p = k$. 
Then $2 \leq k \leq 10$. 

Since the appearance of the Pinchuk counterexample, researchers have been trying to determine the exact value of $k$. 
It is not difficult to conclude, by extending a little the above reasoning that $k > 2$. 

In \cite{G} it was proved that the real Jacobian conjecture holds if both $\deg p < 4$ and $\deg q < 4$. 
Subsequently, in  \cite{BFOO,BOO,BST}, it was concluded that the conjecture holds by assuming that $\deg p < 6$, see also \cite{G1}. 

On the other hand, in \cite{F} it was constructed a counterexample to the real Jacobian conjecture with $\deg p = 10$ and $\deg q = 15$. 
Then in \cite{BF}, a counterexample with $\deg p = 9$ and $\deg q = 15$ appeared. 

So far it follows then that $6 \leq k \leq 9$. 
In this work, we show that $k = 7$. 
Indeed, Corollary \ref{cmain} shows that $k \geq 7$, and Corollary \ref{ultimo} shows that $k \leq 7$. 

Our results were obtained by combining tools from algebraic geometry and dynamical systems. 
Indeed, in Section \ref{453} we recall general key facts in the context of the global injectivity, briefly explaining our strategy. 
In Section \ref{NewtonP} we recall the tools from algebraic geometry we will employ in our paper. 
It is here that we will introduce the function $N_S(c)$, which counts the number of branches at infinity of the curve $p(x,y) = c$ associated with the outer edge $S$ of the Newton polygon of the polynomial $p$. 
When $p$ is a submersion, the sum of the functions $N_S(c)$, considering all the outer edges, is $N(c)$, which is precisely the  number of connected components of the fiber $p^{-1}(c)$. 
In this section, Lemma \ref{bounded}, with results on $N_S$, will be the main one. 
We finish the section by defining the Euler integral of $N_S$ and recalling S\c{e}kalski's theorem relating the Euler integral with the index of the gradient of $p$ at infinity. 

In Section \ref{ds} we recall the Poincar\'e compactification of polynomial vector fields as well as some key concepts on dynamical systems we will need. 
Then we particularize to the Hamiltonian vector field $\nabla p^{\perp}$ associated with a polynomial function $p$. 
The concept of hyperbolic sectors at infinity of $\nabla p^{\perp}$ will be recalled. 
It is the main subject of Theorem \ref{hyperbolic}, a result from \cite{BFOO}, that gives conditions for the non-existence of Jacobian mates for polynomial submersions. 

In Section \ref{agmds} we put together the techniques of the previous sections to construct the main tools we will use in our proof. 
In particular, we prove Theorem \ref{lattice_points}, that precisely describes the function $N_S(c)$ when $S$ has only one interior lattice point. 
Lemmas \ref{new1} and \ref{lemma2} will relate the existence of Jacobian mates with some properties on edges of the Newton polygon of $p$. 

In Section \ref{mr} we prove that $k \leq 7$ by applying the results of the previous sections. 

Then we end the paper in Section \ref{explicitizing} presenting a non-injective polynomial map $(p,q): \R^2 \to \R^2$ whose Jacobian determinant is nowhere zero such that $\deg p = 7$. 

\section{Candidates for $p$}\label{453}
\subsection{Connectedness of fibers and bifurcation set}\label{45601}
It is well known that a polynomial map $F$ satisfying~\eqref{1} is injective if and only if the fibers of $p$ are connected. 
Indeed,~\eqref{1} ensures the monotonicity of $q$ along the connected components of $p^{-1}(c)$, what proves the ``only if'' part. 
The ``if'' part follows because injective polynomial maps of $\R^n$ are automatically onto, see for instance \cite{BBR}, so the fibers of $p$ are connected. 
Then for $F$ satisfying~\eqref{1}, the injectivity of $F$ is equivalent to $p$ being a \emph{locally trivial fibration}, that is, the \emph{bifurcation set of} $p$, that we will denote by $B(p)$, is empty, see more details, for instance, in \cite{BST, TZ}. 

\subsection{Highest homogeneous part}
For a polynomial function $p$, we will denote by $p_+$ its homogeneous part of highest degree. 
From \cite[Lemma 4.2]{BFOO}, as well as from \cite[Theorem 2.2]{CGM} and \cite[Proposition 4]{BV}, it follows that if a polynomial submersion has a disconnected fiber then $p_+$ must have linear factors of multiplicity at least $2$. 
Therefore, since we want to prove the injectivity of maps $F$ satisfying~\eqref{1}, we only need to be concerned with polynomial submersions $p$ such that 
\begin{equation}\label{multiplefactor}
p_+(x,y) = (a x + b y)^2 r(x,y), 
\end{equation}
where $a,b \in \R$ and $r(x,y)$ is a homogeneous polynomial. 

\subsection{Jacobian mates}
We say that a polynomial submersion $p: \R^2 \to \R$ has a \emph{Jacobian mate} when there exists a polynomial submersion $q: \R^2 \to \R$ such that the map $(p,q)$ satisfies~\eqref{1}. 
In this case, $q$ is a \emph{Jacobian mate of $p$}. 

Our strategy is to consider polynomial submersions of degree $6$ with disconnected fibers or, equivalently, such that $B(p) \neq \emptyset$, and to prove that they have no Jacobian mates. 

\subsection{Quadratic-like}\label{ql}
It was proved in \cite{BFM} that quadratic-like polynomial submersions -- those of degree less than $3$ in $x$ or $y$ -- either have all fibers connected or have no Jacobian mates. 
Therefore, we only need to be concerned with the polynomial submersions having degree at least $3$ in both $x$ and in $y$. 

\section{Newton polygon for counting connected components of fibers: the branches at infinity technique}\label{NewtonP}
\subsection{Branches at infinity}
Throughout this paper, any mention to \emph{infinity}, or to the \emph{line at infinity}, will mean the border of $\R^2$ in $\R \mathbb P^2$. 
As usually, we shall represent $\R \mathbb P^2$ by the closed unit disc identifying antipodal points on its boundary. 
In this model, the boundary of the disc corresponds to the line at infinity. 
For any polynomial $p(x,y)$ of degree $d$, by its \emph{homogenization} we mean the polynomial $P(z_1,z_2,z_3) = z_3^d p(z_1/z_3, z_2/z_3)$. 

We will say that an infinity point \emph{$[a : b : 0]$ is contained in the algebraic curve $p(x,y) = 0$} when $P(a, b, 0) = 0$. 
This is equivalent to the real linear factor $a y - b x$ being a divisor of $p_+(x,y)$. 

\subsection{Milnor at infinity}
It is well known, see Milnor \cite[Lemma 3.3]{M2}, that for a non-isolated point $z$ of a given real algebraic curve $p(x,y) = 0$ there exists a neighborhood $U$ of $z$ such that $p^{-1}(0) \cap U$ is the union of finitely many \emph{branches} intersecting only at $z$. 
Each branch is homeomorphic to an interval $(-\epsilon, \epsilon)$ under an analytic parametrization $s(t) = \left(s_1(t), s_2(t)\right)$ with $s(0) = z$. 
The images $s(-\epsilon, 0)$ and $s(0, \epsilon)$ are the \emph{half-branches} of the branch. 
In this case, we say that each of these branches or half-branches is \emph{at $z$}.  

For a \emph{branch at infinity} of an algebraic curve we will mean a branch of the curve at a point in the line of infinity. 

By considering the homogeneous coordinates and their inverses at the chart of $\R \mathbb P^2$ (see also Section \ref{Pc} below), neighborhood of a point in the line of infinity contained in an algebraic curve $p(x,y) = 0$, we can ``convert'' to $\R^2$ the parametrization of any branch at infinity not entirely contained in the coordinate axes at infinity to a Laurent series. 
To be more precise, let $s(t)$ be the above parametrization of a branch of an algebraic curve at infinity with $s_1(t) s_2(t) \not \equiv 0$. 
Deleting the point at infinity, we can put this branch back to $\R^2$, and parametrize it as $\gamma(t) = (x(t), y(t))$, $0 < |t|\leq \epsilon$, where 
\begin{equation}\label{branch}
\begin{aligned}
x(t) & = A t^{-\alpha} + \mbox{ terms of higher degrees, } \\
y(t) & = B t^{-\beta} + \mbox{ terms of higher degrees, } 
\end{aligned}
\end{equation}
for suitable $A, B \in \R$, $A B \neq 0$, and $(\alpha, \beta) \in \Z^2$, with $\alpha>0$ or $\beta>0$. 
It is easy to conclude that this \emph{branch} $\gamma$ is at $[A:B:0]$ if $\alpha=\beta$, at $[1:0:0]$ if $\alpha>\beta$, and at $[0:1:0]$ if $\alpha<\beta$. 

\subsection{Newton polygon}
For a given polynomial $p(x,y) = \sum_{i,j} a_{i j} x^i y^j$, we define its \emph{support} as the subset of $\Z^2$ given by $\{(i,j)\ |\ a_{i j} \neq 0\}$, and its \emph{Newton polygon}, denoted by $NP(p)$, as the convex hull of its support in $\R^2$. 
Up to a linear change of variables, we can always assume that $NP(p)$ is \emph{convenient}, that is, that it touches both the $x$ and $y$ axes away from the origin. 
In this case we also say that \emph{$p$ is convenient}. 
Up to adding a constant, we may assume that a convenient Newton polygon has positive area. 
In this case, for each edge $S$ of a convenient $NP(p)$, we define the vector $\xi \in \Z^2$, with coprime coordinates, that is orthogonal to $S$ and points outward $NP(p)$ as the \emph{outer normal vector of $S$}. 

Any edge such that its outer normal vector has at least one positive coordinate is said to be an \emph{outer edge}. 

For a face $S$ of $NP(p)$ we define the polynomial $p_S(x, y) = \sum_{(i,j) \in S} a_{i,j} x^i y^j$, that can be written as, see \cite{LO}, 
\begin{equation}\label{restriction}
p_S(x,y) = \delta x^r y^s \prod_{i=1}^k (y^{\xi_1} - c_i x^{\xi_2})^{\nu_i}, 
\end{equation}
for suitable $r, s, \nu_i \in \Z$, $\delta \in \R$, and $c_i \in \C$, $i = 1,\ldots, k$. 
The $c_i$, $i = 1,\ldots, k$, are pairwise distinct and non-zero. 

\begin{definition}
The polynomial $p(x,y)$ is \emph{nondegenerate on the edge $S$} of $NP(p)$ if $\nu_i = 1$ whenever $c_i \in \R$, for all $i = 1, \ldots, k$. 
Besides, $p(x,y)$ is \emph{nondegenerate with respect to $NP(p)$}, or simply \emph{nondegenerate}, when $NP(p)$ is convenient and $p(x,y)$ is nondegenerate on each outer edges of $NP(p)$. 
\end{definition}

Sometimes we will  extend the definition of Newton polygon for forms $p(x,y) = \sum_{(i,j) \in \Z^2} a_{i j} x^i y^j$ in the natural way. 

\subsection{Branches associated with outer edges of $NP(p)$}
\begin{definition}\label{mab}
Let $p(x,y)$ be a polynomial. 
We say that a \emph{branch $\gamma$ of $p(x,y) = 0$ at infinity is associated with the outer edge $S$ of $NP(p)$} if there exists a positive integer $m$ such that $(\alpha,\beta) = m \xi$, where $(\alpha, \beta)$ is given in~\eqref{branch} and $\xi$ is the outer normal vector of $S$. 
\end{definition} 

\begin{remark}\label{pAB}
If $p(x,y)$ is convenient, it follows that each branch at infinity of $p(x,y) = 0$ is associated with an outer edge of $NP(p)$. 
Indeed, let $\gamma$ be such a branch parametrized as in~\eqref{branch}, and write  
$$
p = p_\ell + p_{\ell + 1} + \dots + p_d, 
$$
with $\ell < d \in \Z$, the weighted homogeneous decomposition of $p$ with respect to the weights $(\alpha,\beta)$. 
Then
$$
\begin{aligned}
0 = t^d  \sum_{k = \ell}^d p_k\big(x(t), y(t)\big) & = t^d \sum_{k=\ell}^d p_k\Big(t^{-\alpha}(A + \sum_{j = 1}^\infty a_j t^j ),t^{-\beta}(B + \sum_{j = 1}^{\infty} b_j t^j)\Big)\\
&=\sum_{k=\ell}^dt^{d - k}p_k\Big(A + \sum_{j = 1}^\infty a_j t^j , B + \sum_{j = 1}^{\infty} b_j t^j\Big), 
\end{aligned}
$$
for suitable $a_j, b_j \in \R$. 
Letting $t \to 0$ above, we conclude that 
$$
p_d(A,B)=0. 
$$ 
Since $AB\neq 0$, it follows that $NP(p)$ has an outer edge with normal vector parallel to $(\alpha,\beta)$.
\end{remark}

Note that outer edges of a convenient Newton polygon do not change when $p$ is replaced by $p - c$ for any $c \in \R$. 
So it makes sense to define the following. 
\begin{definition}
Let $p(x,y)$ be a convenient polynomial and $S$ be an outer edge of $NP(p)$. 
We define $N_S(c)$ as the number of branches of $p(x,y) - c = 0$ at infinity associated with $S$. 

We further denote by $N(c)$ the number of branches of $p(x,y) - c = 0$ at infinity. 
\end{definition}

\begin{remark}\label{45r}
\begin{enumerate}
\item It is clear that for a convenient $p(x,y)$ it follows that $N(c) = \sum N_S(c)$, where the sum runs over the outer edges of $NP(p)$. 
\item Also, for a polynomial submersion $p(x,y)$, it is clear that $N(c)$ is precisely the number of connected components of the fiber $p^{-1}(c)$. 
So, in this case, the function $N$ agrees with the definition given in \cite{BFOO}. 
\end{enumerate}
\end{remark}

The following lemma provides some general properties of the function $N_S$. 

\begin{lemma}\label{bounded}
Let $p(x,y)$ be a convenient polynomial and let $S$ be an outer edge of $NP(p)$. 
Denote $\overline N_S:=\# (S\cap \mathbb{Z}^2)-1$. 
Then 
\begin{enumerate}[label={\textnormal{(\roman*)}}]
\item the function $N_S$ is bounded from above by $\overline N_S$, 
\item the function $N_S$ is constant $\mod 2$ outside a finite set, 
\item if $p$ is nondegenerate on $S$ then $N_S$ is constant  and $N_S \equiv \overline{N}_S \mod{2}$, 
\item if $S$ has no interior lattice points, then $N_S\equiv1$,
\item if $N_S(c_0)>0$, then there exists $\epsilon >0$ such that the restrictions of $N_S$ to the intervals $(c_0-\epsilon,c_0)$ and $(c_0,c_0+\epsilon)$ are constant and at least one of these functions is nonzero.
\end{enumerate}
\end{lemma}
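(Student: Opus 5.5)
The plan is to reduce everything to the local study of the curve $p(x,y)=c$ near the torus orbit at infinity determined by the outer normal $\xi=(\xi_1,\xi_2)$ of $S$. First I would perform a toric (monomial) change of coordinates adapted to $\xi$. Choose a unimodular matrix sending $\xi$ to $(1,0)$, say $x=u^{-\xi_1}v^{a}$ and $y=u^{-\xi_2}v^{b}$ with $a\xi_2-b\xi_1=\pm1$. Then branches associated with $S$, parametrized as in~\eqref{branch} with $(\alpha,\beta)=m\xi$, correspond to real branches of the transformed curve through points $(0,v_0)$ with $v_0\neq0$ real.

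Writing $p-c=u^{-d_S}\tilde P(u,v)$, where $d_S$ is the $\xi$-weighted degree of $S$, gives $\tilde P(0,v)=p_S(v^{a},v^{b})$, a Laurent monomial times a polynomial $g(v)$ of degree $\#(S\cap\Z^2)-1=\overline N_S$. Its nonzero roots are the values $v_0$ associated with the factors $y^{\xi_1}-c_ix^{\xi_2}$ in~\eqref{restriction}, and the multiplicity of a root is $\nu_i$. The key point is that the constant $c$ enters $\tilde P$ only through $-c\,u^{d_S}$, which lies in a higher order in $u$ because $d_S>0$ for an outer edge. Thus $\tilde P(0,v)$ is independent of $c$.

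\textbf{(i)} The real branches of $\tilde P_c=0$ through a point $(0,v_0)$ that are transversal to $\{u=0\}$ number at most the intersection multiplicity of the curve with $u=0$ at that point, which is the multiplicity of $v_0$ as a root of $g$. Branches with $u\equiv0$ do not occur, since $\tilde P_c(0,v)\not\equiv 0$. Summing over the real roots gives $N_S(c)\le\deg g=\overline N_S$. One must count each whole branch (both half-branches) once, which is consistent with the definition of $N_S$ counting branches.

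\textbf{(iii)} If $p$ is nondegenerate on $S$, each real root is simple. By the implicit function theorem, through each such point there is exactly one smooth branch $v=v(u)$, independent of $c$. So $N_S$ equals the number of real roots of $g$, which is congruent to $\deg g=\overline N_S$ modulo $2$, because nonreal roots come in conjugate pairs.

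\textbf{(iv)} If $S$ has no interior lattice points, then $g$ has degree $1$, so (iii) applies and $N_S\equiv1$.

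\textbf{(ii)} For general $S$, I would use the parity fact that the number of real branches of a real plane curve germ at a point has the same parity as its intersection multiplicity with a line transverse to the tangent cone there. The honest version of this argument should count half-branches in the $u>0$ and $u<0$ sides of the blow-up. Since the intersection multiplicity with $u=0$ is $\nu_i$, which is independent of $c$, the parity would follow. The difficulty is that $u=0$ may be tangent to the curve, so this is only a heuristic. Therefore I would instead argue via the discriminant. The function $c\mapsto$ (number of real Puiseux roots $v(u)$ at $v_0$ with real coefficients) can change only at values of $c$ where the Newton–Puiseux process for $\tilde P_c$ at $(0,v_0)$ degenerates. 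These are the values where some coefficient polynomial in $c$ in the Newton polygon algorithm acquires a multiple root, and they form a finite set. Between two such values the root configuration varies continuously, so nonreal roots stay paired and $N_S$ is constant; in particular it is constant mod $2$ outside a finite set.

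\textbf{(v)} For this local constancy near $c_0$ I would use the same finiteness: on small punctured intervals around $c_0$ the count is constant. For the nonvanishing, take a branch at infinity for $c_0$, parametrized by $\gamma(t)$. Since $\nabla p$ does not vanish identically along it (otherwise $p$ would be constant), nearby levels $p=c$ on at least one side of $c_0$ have curves close to $\gamma$ escaping to infinity along the same direction. Concretely, perturbing the Puiseux root: $\tilde P_{c}=\tilde P_{c_0}-(c-c_0)u^{d_S}$, and a real root $v(u)$ of $\tilde P_{c_0}$ produces a real root of $\tilde P_c$ for $c-c_0$ of a suitable sign, by a sign-change (intermediate value) argument on a small arc $u=\text{const}$ crossing $v(u)$. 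This yields a branch at infinity associated with $S$ for such $c$.

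The main obstacle is (ii) (and the constancy in (v)), namely making rigorous the claim that real Puiseux roots change only at finitely many $c$. I would try to handle it by treating $\tilde P(u,v,c)$ as a polynomial in three variables and applying Puiseux/toroidal resolution to the family, where the exceptional values of $c$ form the projection of a finite algebraic set.
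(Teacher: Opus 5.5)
The toric reduction and your arguments for (i), (iii) and (iv) are sound and essentially match the paper's. In (i), every branch not contained in $\{u=0\}$ contributes at least $1$ to $\mathrm{ord}_{v_0}\tilde P_c(0,\cdot)=\nu_i$, whether or not it is transversal.

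\textbf{Gap in (ii).} Statement (ii) says the \emph{parity} of $N_S$ is the same on all of $\R$ minus a finite set. So parity cannot jump at the exceptional values; this excludes, e.g., a type $0\mathbf{1}1$. Your Newton--Puiseux/discriminant argument, even if made rigorous, shows only that $N_S$ is constant on each of finitely many intervals. The inference ``in particular it is constant mod $2$ outside a finite set'' does not follow.

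The parity principle you first invoke is false even without tangency. The cusp $w^2=u^3$ has one real branch but meets the transverse line $u=0$ with multiplicity $2$. What is true is that the number of half-branches in $\{u>0\}$ is $\equiv\nu_i \pmod 2$, and this says nothing about the number of branches mod $2$.

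The missing ingredient is a global parity invariant. The paper resolves the indeterminacy of the rational function $F=p\circ\sigma$ at the point. Then for all but finitely many $c$, the branches of $\tilde P_c=0$ there correspond bijectively to transversal intersections of $\hat F^{-1}(c)$ with the exceptional divisor $E=E_1\cup\dots\cup E_s$. Hence the count equals $\sum_j\#(\hat F|_{E_j})^{-1}(c)$. Each $\hat F|_{E_j}$ is a rational map $\R\mathbb{P}^1\to\R\mathbb{P}^1$ whose fibre cardinality keeps its parity as $c$ crosses any value: near each preimage point the map is either monotone, contributing $1$, or has a local extremum, contributing $0$ or $2$. The same construction also gives the local constancy on punctured intervals that you leave unproved.

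\textbf{Problems in (v).} Apart from relying on that local constancy, your nonvanishing argument fails at two points.
\begin{enumerate}
\item $\nabla p$ can vanish along $\gamma$ without $p$ being constant, since $p-c_0$ may have a multiple factor; the lemma is for arbitrary convenient $p$.
\item More seriously, an intermediate value argument on a small arc $\{u=\mathrm{const}\}$ around $v(u)$ produces zeros of $\tilde P_c$ only for $|c-c_0|$ below a bound that tends to $0$ with $u$. So it yields no branch at $(0,v_0)$ for any fixed $c\neq c_0$.
\end{enumerate}
For example, take $w=v-v_0$ and $\tilde P_c=w^2-u^{2n}-(c-c_0)u^{d}$ with $d<2n$ even; this is the case $b=2$, $B$ even, of Theorem~\ref{lattice_points}. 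Here $\tilde P_{c_0}$ changes sign across $w=u^n$, so your argument would apply to both signs of $c-c_0$. Yet for $c<c_0$ there is no branch at all. For $c>c_0$ the branches are $w\approx\pm\sqrt{c-c_0}\,u^{d/2}$, which are not close to $\gamma$.

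The side is selected by the pole of $p\circ\sigma$ along the divisor, not by $\gamma$. The paper uses that $\hat F$ takes both the values $c_0$ and $\infty$ on the connected set $E$. Hence some $E_j$ carries a nonconstant restriction whose image contains $c_0$.

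Your idea can be repaired in this spirit. Along $\gamma$ one has $\tilde P_c(u,v(u))=-(c-c_0)u^{d_S}$, while $\tilde P_c(u,v_0+\rho)\to\tilde P(0,v_0+\rho)\neq0$ as $u\to0$. Choose the sign of $c-c_0$ so that these two values have opposite signs. This gives a zero between them for every small $u$ on the side of $\gamma$, so $(0,v_0)$ lies in the closure of $\{\tilde P_c=0,\ u\neq0\}$. The curve selection lemma then yields a branch.
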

\begin{proof}
In the proof we will use a specific monomial substitution. 
Its aim is to replace counting branches at infinity with counting local branches of algebraic curves.  

Let $\xi=(\xi_1,\xi_2)$ be the outer normal vector to $S$, and write the polynomial $p_S$ as in~\eqref{restriction}, where, without loss of generality, we may assume that $\delta=1$. 
We may also assume that $p(0,0)\neq0$; in this case $(0,0)$ is a vertex of $NP(p)$.  

Since the components of $\xi$ are coprime integers, there exists $\eta=(\eta_1,\eta_2)\in\Z^2$ such that  $\xi_1\eta_2-\xi_2\eta_1=1$. 
Then the matrix 
$
\begin{pmatrix} 
\eta_1 & -\xi_1 \\  
\eta_2 & -\xi_2 
\end{pmatrix}
$
is invertible, with inverse
$
\begin{pmatrix}
-\xi_2 & \xi_1 \\ 
-\eta_2 & \eta_1 
\end{pmatrix}. 
$
Consequently the rational mapping $\sigma: (\R\setminus\{0\})^2\to (\R\setminus\{0\})^2$ given by 
\begin{equation}\label{toric}
\begin{aligned}
x &=  u^{\eta_1} v^{-\xi_1},\\
y & =  u^{\eta_2} v^{-\xi_2}
\end{aligned} 
\end{equation}
has the inverse $\sigma^{-1}$ given by 
\begin{equation*}
\begin{aligned} 
u & = x^{-\xi_2}y^{\xi_1}, \\
v & = x^{-\eta_2}y^{\eta_1}. 
\end{aligned} 
\end{equation*}

Now we describe the effect of substituting~\eqref{toric} in the polynomial $p$. 
We have 
\begin{equation}\label{monomial}
x^a y^b = (u^{\eta_1} v^{-\xi_1})^a(u^{\eta_2} v^{-\xi_2})^b = u^{\eta_1 a+\eta_2 b}v^{-\xi_1a - \xi_2b}, 
\end{equation}
\begin{equation}\label{binomial}
y^{\xi_1} - c x^{\xi_2} = (u^{\eta_2} v^{-\xi_2})^{\xi_1}-c (u^{\eta_1} v^{-\xi_1})^{\xi_2} = u^{\eta_1\xi_2}v^{-\xi_1\xi_2}(u-c). 
\end{equation}

It follows from~\eqref{monomial} that the Newton polygon of 
$$
\tilde p(u,v):=p(u^{\eta_1} v^{-\xi_1},u^{\eta_2} v^{-\xi_2})
$$ 
is the image of $NP(p)$ by the linear mapping $T:\R^2 \to \R^2$ given by 
$$
T(q) = \big(\langle\eta, q\rangle,-\langle\xi,q\rangle \big). 
$$ 
Let $q_1$, $q_2$, $q_3$ be three consecutive (in clockwise orientation) vertices of  $NP(p)$ such that $q_1$ and $q_2$ are the endpoints of $S$ and let $\tilde q_i=T(q_i)$ for $i=1,2,3$. Then the points  
$\tilde q_1$, $\tilde  q_2$, $\tilde q_3$ are three consecutive  vertices of the Newton polygon of $\tilde p$ and the edge $\tilde S$ of $NP(\tilde p)$ with the endpoints $\tilde q_1$, $\tilde  q_2$ is the image of $S$ by $T$. 

By our choice of $\xi$ and $\eta$, we have 
$\langle\xi,q_1\rangle = \langle\xi,q_2\rangle > \langle\xi,q_3\rangle$ by convexity 
and $\langle\eta,q_2\rangle < \langle\eta,q_1\rangle$.
Replacing $\eta$ by $\eta-K\xi$, where $K$ is a sufficiently large integer, we can additionaly assume that $\langle\eta,q_2\rangle \leq \langle\eta,q_3\rangle$.
Then the edge $\tilde S$ is horizontal and $\tilde q_1$ and $\tilde q_3$ belong to the quadrant $\tilde q_2+\R_+^2$. 
Thus, by convexity the entire Newton polygon of $\tilde p$ is contained in this quadrant. 
See Figure \ref{sketch}. 
\begin{figure}[htpb]
\begin{center}
\begin{tikzpicture}
\draw[->,thick] (-1,0) -- (4,0);
\draw[->,thick] (0,-1) -- (0,3.5);
\coordinate (q3) at (2.8,0.4);
\coordinate (q2) at (2.5,2);
\coordinate (q1) at (1,3);
\draw (q3) -- (q2);
\draw[ultra thick, blue!70!black] (q2) -- (q1) node[pos=0.2, below left, yshift=4pt, black] {$S$};
\draw (q1) -- (0.3,2.5); 
\fill (q1) circle (1.5pt) node[above] {$q_1$};
\fill (q2) circle (1.5pt) node[right] {$q_2$};
\fill (q3) circle (1.5pt) node[above right] {$q_3$};
\draw[->,thick] (1.7,2.5) -- (2.5,3.6) node[pos=1, below, xshift=4pt, yshift=3pt , black] {$\xi$};
\draw[->,thick] (1.7,2.5) -- (0.5,1.8) node[pos=0.9, below, xshift=-4pt, black] {$\eta$};
\draw[->, bend left=30] (4.5,2) to node[midway, above] {$T$} (6.5,2);
\draw[->,thick] (7,0) -- (11,0);
\draw[->,thick] (8,-1) -- (8,3.5);
\coordinate (q3t) at (8.5,2.5);
\coordinate (q1t) at (10.5,-0.8);
\coordinate (q2t) at (7.5,-0.8);
\fill (q1t) circle (1.5pt) node[below] {$\tilde{q}_1$};
\fill (q2t) circle (1.5pt) node[below] {$\tilde{q}_2$};
\fill (q3t) circle (1.5pt) node[left] {$\tilde{q}_3$};
\draw (10.8,0.5) -- (q1t); 
\draw[ultra thick, blue!70!black] (q1t) -- (q2t) node[midway,below, black] {$\tilde{S}$};
\draw (q2t) -- (q3t); 
\fill[pattern=north east lines, pattern color=gray!60] 
(7.5,-0.8) -- (11,-0.8) -- (11,3.5) --(7.5,3.5)-- cycle;
\end{tikzpicture}
\caption{The Newton polygons of $p$ and of $\tilde{p}$.}\label{sketch}
\end{center}
\end{figure}

The edge $\tilde S$ is the Newton polygon of  $p_S\circ\sigma$. Moreover by~\eqref{monomial} and~\eqref{binomial} we get 
$$ 
(p_S\circ\sigma)(u,v)= u^{-A} v^{-B}  \prod_{i=1}^k (u - c_i)^{\nu_i}
$$ 
for suitable $A,B \in \Z$. 
By convexity argument the origin $(0,0) = T(0,0)$ belongs to $\tilde q_2+\R_+^2$, so $A\geq 0$. Since $(0,0)$ does not belong to the line containing $S$, we get that $(0,0)$ does not belong to the line containing $\tilde S$ and consequently $B>0$. 

Summing up 
\begin{equation}\label{local}
\tilde p(u,v)= u^{-A} v^{-B} \left[  \prod_{i=1}^k (u - c_i)^{\nu_i} + v \tilde p_1(u,v) \right]
\end{equation}
for some nonzero \emph{polynomial} $\tilde p_1\in \R[u,v]$, because the Newton polygon of the ``polynomial'' in square parenthesis is contained in the first quadrant and $\tilde S$ is horizontal. 
Since the edges $S$ and $\tilde S$ have the same number of lattice points, because $\sigma$ is an automorphism on $\Z^2$, we have 
\begin{equation}\label{67}
\overline N_S=\deg  \prod_{i=1}^k (u - c_i)^{\nu_i} = \sum_{i=1}^k \nu_i. 
\end{equation}

\medskip
Next we describe the effect of the transformation~\eqref{toric} to analytic parametrizations.  

Any analytic curve $\tilde\gamma:(\R,0)\ni t\to (u(t),v(t))\in\R^2$ such that 
$$
\begin{aligned}
u(t) & = c + \cdots \\ 
v(t) & = d t^m + \cdots
\end{aligned}\ , 
\quad  c,d\neq 0,\; m\in \Z,\; m \geq 1, 
$$
where dots mean terms of higher degrees, will be called a \emph{curve of type $(c,0)_m$}. 

Any meromorphic curve $\gamma:(\R,0)\ni t\to (x(t),y(t))\in\R^2$ such that 
\begin{equation*}
\begin{aligned}
x(t) & =  a t^{-m\xi_1}+ \cdots \\
y(t) & = b t^{-m\xi_2} + \cdots
\end{aligned}\ , \quad  a,b\neq 0,\;  m\in \Z,\; m\geq 1, 
\end{equation*}
where dots mean terms of higher degrees will be called a \emph{curve of type $(\xi, a^{-\xi_2}b^{\xi_1})$}. 

\medskip
\noindent
\textbf{Claim.}
If $\tilde\gamma$ is a curve of type $(c,0)_m$ then $\sigma\circ\tilde\gamma$ is a curve of type $(\xi, a^{-\xi_2} b^{\xi_1})_m$, with $a = c^{\eta_1}$ and $b = c^{\eta_2}$. 
Conversely, if $\gamma$ is a curve of type $(\xi, a^{-\xi_2} b^{\xi_1})_m$, then $\sigma^{-1}\circ\gamma$ is a curve of type $(c,0)_m$, with $c = a^{-\xi_2}b^{\xi_1}$. 
\begin{proof}
Let $\tilde\gamma(t)=(c+\cdots, d t^m+\cdots)$. 
Then 
$$
\begin{aligned}
\sigma(\tilde\gamma(t)) & =\left((c+\cdots)^{\eta_1} (d t^m+\cdots)^{-\xi_1}, (c+\cdots)^{\eta_2} (d t^m+\cdots)^{-\xi_2}\right)\\
& =\left(c^{\eta_1}d^{-\xi_1}t^{-m\xi_1}+\cdots, c^{\eta_2}d^{-\xi_2}t^{-m\xi_2}+\cdots\right). 
\end{aligned}
$$
Since $(c^{\eta_1}d^{-\xi_1})^{-\xi_2}(c^{\eta_2}d^{-\xi_2})^{\xi_1} = (c^{\eta_1})^{-\xi_2} (c^{\eta_2})^{\xi_1} = c$, it follows that $\sigma\circ\tilde\gamma$ is of type $(\xi, a^{-\xi_2} b^{x_1})_m$. 
Let $\gamma(t)=(a t^{-m\xi_1}+\cdots, b t^{-m\xi_2}+\cdots)$. 
Then $\sigma^{-1}(\gamma(t))$ is equal to 
$$
\begin{aligned}
\left((a t^{-m\xi_1}+\cdots)^{-\xi_2}(b t^{-m\xi_2}+\cdots)^{\xi_1}, (a t^{-m\xi_1}+\cdots)^{-\eta_2}(b t^{-m\xi_2}+\cdots)^{\eta_1}\right) \\
 = \big(a^{-\xi_2}b^{\xi_1}+\cdots, a^{-\eta_2}b^{\eta_1}t^{m(\xi_1\eta_2-\xi_2\eta_1)}+\cdots \big) \\ 
 = \big(a^{-\xi_2}b^{\xi_1}+\cdots, a^{-\eta_2}b^{\eta_1}t^m+\cdots\big),
\end{aligned}
$$ 
hence $\sigma^{-1}\circ\gamma(t)$ is of type $(a^{-\xi_2}b^{\xi_1},0)_m$. 
\end{proof}

It follows from the claim and~\eqref{local} that the number of branches at infinity 
of $p^{-1}(c)$ associated with the edge $S$ is equal to the number of analytic branches 
of the curve 
$$
\tilde P_c(u,v):=\prod_{i=1}^k (u - c_i)^{\nu_i} + v \tilde p_1(u,v)-cu^Av^B=0
$$ 
passing through points $(c_i,0)$ where $1\leq i\leq k$ and $c_i$ is a real number.  Hence it is enough to prove the following result. 

\begin{lemma}\label{local-bound}
Let $\tilde N_i(c)$ denotes the number of branches of the curve 
$\tilde P_c(u,v)=0$ passing through $(c_i,0)$, $c_i \in \R$. 
Then 
\begin{enumerate}[label={\textnormal{(\roman*)}}]
\item the function $\tilde N_i$  is bounded from above by $\nu_i$,
\item the function $\tilde N_i$ is constant $\mod{2}$ outside a finite set, 
\item if $\nu_i=1$ then $\tilde N_i=1$,
\item for every $c_0\in\R$ there exists $\epsilon >0$ such that the restrictions of $\tilde N_i$ to the intervals $(c_0-\epsilon, c_0)$ and $(c_0, c_0 + \epsilon)$ are constant.
If $\tilde N_i(c_0)>0$ then at least one of these restrictions is nonzero.
\end{enumerate}
\end{lemma}
We will prove this lemma later. 
First we complete the proof of Lemma~\ref{bounded}. 
We have 
\begin{equation}\label{local-global}
N_S=\sum_i \tilde N_i
\end{equation}
where the sum runs over $i\in\{1,\dots,k\}$ such that $c_i$ is real. 
Hence by~\eqref{67} and (i)~of Lemma~\ref{local-bound} we obtain~(i) of the lemma. 
Item~(ii) follows from~\eqref{local-global} and Statement (ii)~of Lemma~\ref{local-bound}.  
If $p$ is nondegenerate on $S$, then by (iii)~of Lemma~\ref{local-bound} $N_S$ is equal 
to the number of real roots of the polynomial $\prod_{i=1}^k (u - c_i)^{\nu_i}$ which in turn 
is congruent modulo 2 to the degree of this polynomial (see \eqref{67}). 
This gives (iii).  
If $S$ has no interior lattice points then  $k=1$ and $v_1=1$ in formula~\eqref{local}. 
Therefore by (ii)~of Lemma~\ref{local-bound} $N_S=\tilde N_1=1$.  Item (v) follows from~\eqref{local-global} and (iv)~of Lemma~\ref{local-bound}.
\end{proof}

\begin{proof}[Proof of Lemma~\ref{local-bound}]
The number of branches of the curve $\tilde P_c(u,v)=0$ passing through $(c_i,0)$ is bounded 
from above by the order of the polynomial $\tilde P_c(u,v)$ at $(c_i,0)$ which is less than or equal 
to $\nu_i$. 
This gives~(i). 
If $\nu_i=1$, then $(\tilde P_c)_u (c_i,0) \neq 0$, so by the implicit function theorem, there is exactly one smooth branch of $\tilde P_c(u,v) = 0$ passing through $(c_i,0)$, that can be written as a graphic $(u(t), t)$. 
In particular, this branch is of type $(c_i,0)_1$. 
This gives~(iii).  
	
In order to prove (ii) and (iv) we will use resolution of singularities. Consider a rational function 
$ F:\R^2\to \R\mathbb{P}^1$,  $F(u,v)=u^{-A}v^{-B}\bigl(\prod_{i=1}^k (u - c_i)^{\nu_i} + v \tilde p_1(u,v)\bigr)$. As the function $F$ is not defined at $(c_i,0)$ it may not extend continuously 
at this point.  

However one can find a smooth algebraic manifold $M$ and a proper mapping $\pi:M\to \R^2$ 
which is a composition of blowing ups such that $\pi$ restricted to $M\setminus \pi^{-1}(c_i,0)$
is a biregular map onto $\R^2\setminus \{(c_i,0)\}$ and the set $E=\pi^{-1}(c_i,0)$ (called an exceptional divisor) is a connected union of irreducible algebraic components $E_1\cup \dots \cup E_s$, each biregular with the projective line $\R\mathbb{P}^1$.  
Moreover there exists a neighbourhood $U$ of $(c_i,0)$ and a regular function $\hat F: \pi^{-1}(U)\to \R\mathbb{P}^1$ such that $\hat F=F\circ\pi$ on the set $ \pi^{-1}(U)\setminus E$. This proces is called a resolving of indeterminacy of $F$ at $(c_i,0)$.  For a reference on this technique, see, for instance, \cite[Corollary 1.76]{kollar}.

We will also use the property that for any analytic branch $\gamma$ passing through $(c_i,0)$ 
there exists a unique point $P$ of $E$ and an analytic branch $\hat\gamma$ passing through 
$P$ such that $\gamma =\pi(\hat\gamma)$. We will call $\hat\gamma$ the lift of $\gamma$.

For any $j\in\{1,\dots,s\}$ the restriction $\hat F|_{E_j}$ can be identified 
with a rational function $H:\R\mathbb{P}^1\to\R\mathbb{P}^1$.  
	
\medskip\noindent
\textbf{Claim.}  The function $c\to \# H^{-1}(c)$ is constant $\mod{2}$ outside a finite set. 
	
\begin{proof}[Proof of the claim]
If $H$ is constant then there is nothing to prove.  Thus assume that $H$ is nonconstant. 
Let $c_0$ be any point of $\R\mathbb{P}^1$. We will show that the function $c\to \# H^{-1}(c)$
preserves parity while passing through $c_0$.  Applying a homography at the source and at the 
target we may assume that $c_0=0$ and the preimage $H^{-1}(c_0)=:\{z_1,\dots,z_m\}$ does 
not contain the point at infinity of $\R\mathbb{P}^1$.  For every point $z_l$ for $1\leq l\leq m$ 
there exists a neighborhood $U_l$ such that the function $H$ is monotonous on the left of $z_l$
and on the right of $z_l$. 
Let $\epsilon >0$ be small enough. 
We have three possibilities: 
\begin{itemize} 
\item[-] if $z_l$ is a local minimum of $H$ then $H^{-1}(c)\cap U_l$ is empty for $c\in (-\epsilon,0)$ and 
has two points for $c\in (0,\epsilon)$, 
\item[-] if $z_l$ is a local maximum of $H$ then $H^{-1}(c)\cap U_l$ has two points  for $c\in (-\epsilon,0)$ and is empty for $c\in (0,\epsilon)$, 
\item[-] if $H$ is increasing or decreasing in $U_l$ then  $\#(H^{-1}(c)\cap U_l)=1 $ for $c\in (-\epsilon,\epsilon)$. 
\end{itemize}
		
The function $H$ is separated from $0$ in the complement of $U_1\cup\dots \cup U_m$. Thus shrinking $\epsilon$ if necessary we have: $\#H^{-1}(c)$ is constant for $c\in (-\epsilon,0)$,   $\#H^{-1}(c)$ is constant for $c\in (0,\epsilon)$,	and $\#H^{-1}(c)$ preserves parity for $c\in (-\epsilon,0)\cup (0,\epsilon)$.  This proves the claim.
\end{proof}
	
	
It follows from the claim that for any $c_0\in\R$ there exists $\epsilon>0$ such that the function 
$c\to \# (\hat F^{-1}(c)\cap E)$ is constant $\mod 2$ for $c\in (-\epsilon,0)\cup (0,\epsilon)$.
We may also assume, shrinking $\epsilon>0$ if necessary, that every $c\in (-\epsilon,0)\cup (0,\epsilon)$ is a regular value of $\hat F$ restricted to any component $E_j$ of the exceptional divisor $E$. This implies that the curve $\hat F^{-1}(c)$ intersects $E$ transversally, so the number 
of branches of the curve $\hat F^{-1}(c)$ passing through $E$ is equal to $\# (\hat F^{-1}(c)\cap E)$.
Since the branches of the curve $\tilde P_c(u,v)=0$ passing through $(c_i,0)$ lift to the branches 
of $\hat F^{-1}(c)$ passing through $E$, we get (ii). 
	
In order to prove (iv) assume that $\tilde N_i(c_0)>0$. 
Then there is at least one branch of the curve $\tilde P_{c_0}(u,v)=0$ passing through $(c_i,0)$. Since the function $\hat F$ has value $c_0$ on the lift of this branch,  it has value $c_0$ at some point of $E$. 
Observe also that $\hat F(u,0)=\infty$ if $u\notin \{c_1,\dots, c_k\}$. 
The lift of a branch $v=0$ to $M$ intersects $E$, hence by continuity of $\hat F$ this function 
attains value $\infty$ at some point of $E$.  The image of every $E_j$ by $\hat F$ is either a point 
or a connected infinite subset of $\R\mathbb{P}^1$.  Since $E$ is a connected set, $\hat F(E)$ is connected too. 
Moreover $\hat F(E)$ contains at least two points: $c_0$ and $\infty$. 
Thus there exists $E_j$ such that $\hat F$ restricted to $E_j$ is nonconstant and $c_0\in \hat F(E_j)$. 
Hence at least one of the functions $H$ considered in the claim is nonconstant and thus is nonzero in the interval $(c_0,c_0+\epsilon)$ or in the interval $(c_0-\epsilon,c_0)$ (see the proof of the claim). This gives (iv).
\end{proof}

\begin{corollary}\label{type}
Let $p(x,y)$ be a convenient polynomial such that $p$ is nondegenerate on the edge $S$ of $NP(p)$. 
Then $m$ of \emph{Definition~\ref{mab}} can be taken to be $1$. 
\end{corollary}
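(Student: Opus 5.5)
Let $\gamma$ be a branch of $p(x,y)=c$ at infinity associated with $S$. The plan is to pass to the $(u,v)$ coordinates of the proof of Lemma~\ref{bounded}, read off the exponent $m$ there, and come back via the Claim.

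We use the monomial substitution $\sigma$ of~\eqref{toric} and the local form~\eqref{local}. As in the proof of Lemma~\ref{bounded}, we may first arrange $p(0,0)\neq 0$ by adding a constant; this does not change the outer edges. We may also take $\delta=1$.

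By Remark~\ref{pAB}, $\gamma$ is parametrized as in~\eqref{branch} with $(\alpha,\beta)=m\xi$ and $AB\neq 0$. So $\gamma$ is a curve of type $(\xi,a^{-\xi_2}b^{\xi_1})_m$ with $a=A$, $b=B$. By the Claim in the proof of Lemma~\ref{bounded}, $\sigma^{-1}\circ\gamma$ is a curve of type $(c',0)_m$ with $c'=A^{-\xi_2}B^{\xi_1}\neq0$. In particular it is an analytic branch of $\tilde P_c(u,v)=0$ through $(c',0)$. Plugging $v=0$ into $\tilde P_c$ leaves $\prod(u-c_i)^{\nu_i}$, because $B>0$ kills the term $c u^Av^B$. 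Hence $c'=c_i$ for some real root $c_i$.

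Nondegeneracy on $S$ gives $\nu_i=1$ for every real $c_i$. So $(\tilde P_c)_u(c_i,0)\neq 0$, exactly as in item (iii) of Lemma~\ref{local-bound}. By the implicit function theorem, the only branch of $\tilde P_c=0$ through $(c_i,0)$ is the smooth graph $u=g(v)$, parametrized by $t\mapsto(g(t),t)$, which is of type $(c_i,0)_1$. Therefore $\sigma^{-1}\circ\gamma$ is this branch.

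The main obstacle is that $m$ is not an invariant of the branch as a set: one could reparametrize by $t\mapsto t^2$ and get a different exponent. So the precise content to prove is that $\gamma$ \emph{admits} a parametrization with $m=1$, namely one in which $\gamma$ is injective and the parameter is a uniformizer of the branch. I will handle this by working with the branch as a geometric object. The map $\sigma$ is a bijection of $(\R\setminus\{0\})^2$, so it maps the half-branches of $\sigma^{-1}\circ\gamma$ to those of $\gamma$. The smooth graph is parametrized by $t\mapsto(g(t),t)$ with $0<|t|\le\epsilon$. Applying the first part of the Claim, $\sigma(g(t),t)$ is a parametrization of $\gamma$ of type $(\xi,\cdot)_1$, whose leading exponents are $(\alpha,\beta)=1\cdot\xi$. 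This is exactly the statement that $m$ in Definition~\ref{mab} can be taken to be $1$.

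One remaining point needs checking: that $\sigma(g(t),t)$ has the Laurent form~\eqref{branch}. The coordinates are $g(t)^{\eta_j}t^{-\xi_j}$ with $g(0)=c_i\neq0$, so they are genuinely Laurent series in $t$ with nonzero leading coefficients. This verifies the form.
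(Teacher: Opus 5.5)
Your proposal is correct and follows the paper's argument: you pull the branch back by $\sigma^{-1}$ via the Claim to an analytic curve through a real root $(c_i,0)$ of $\tilde P_c$, use $\nu_i=1$ and the implicit function theorem to get the smooth graph $t\mapsto(g(t),t)$ of type $(c_i,0)_1$, and push it forward by $\sigma$. You are more careful than the paper in stating that the conclusion concerns the existence of a reparametrization with $m=1$; as a minor correction, the right example of an inflated exponent is $t\mapsto t^3$, not $t\mapsto t^2$, since $t\mapsto t^2$ does not parametrize the whole branch injectively.
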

\begin{proof} 
The positive integer $m$ of Definition~\ref{mab} is the same $m$ appearing in the definition of the curves of type $(c_i,0)_m$ and $(\xi, A^{-\xi_2} B^{\xi_1})_m$ from the proof of Lemma~\ref{bounded}. 
In the nondegenerate case, all local analytic curves of type $(c_i,0)_m$ have $m = 1$, according to the proof of Lemma \ref{local-bound}. 
Hence for all the branches of infinity of $p(x,y) = 0$ associated with $S$, we will have $m = 1$ in Definition~\ref{mab}. 
\end{proof}

\subsection{The type of $N_S$}
Let $C_1, \ldots, C_k$ be semialgebraic subsets of $\R$ and $m_1, \ldots, m_k$ be real numbers. 
As usually we define a \emph{constructible function} $f: \R \to \R$ as
\begin{equation}\label{simples}
f(x) = \sum_{i = 1}^k m_i \ind_{C_i}(x)
\end{equation}
By considering Thom's result on the finiteness of the bifurcation values of a polynomial $p(x,y)$, it follows in particular that for a given outer edge of $NP(p)$ convenient, the function $N_S$ has finitely many number of jumps, and so it is a constructible function with $k = 2 l + 1$ and 
$$
C_1 = (-\infty, c_1), \ C_2 = \{c_1\}, \ C_3 =  (c_1, c_2),  \ldots, \ C_{k-1} = \{c_l\}, \ C_k = (c_{l}, \infty), 
$$
for $c_1 < c_2 < \cdots < c_{l}$ suitable real numbers. 
In this case we say that \emph{$N_S$ is of type $m_1 {\bf m_2}m_3\cdots m_{k-2} {\bf m_{k-1}} m_k$} 
(here we are not excluding the possibility of $m_{i-1} = m_i = m_{i+1}$ for some $i$, and so the ``type" is not unique). 
When $N_S$ is \emph{constant}, we simply say so. 
Clearly function $N$ has exactly the same property than each function $N_S$. 
And we can define the type of $N$ similarly. 
When $p$ is a submersion, see again Remark \ref{45r}, and we use the set $\{c_1, \ldots, c_l\}$ of the definition of $N$ to agree with $B(p)$, it follows that the type $m_1 {\bf m_2}m_3\cdots m_{k-2} {\bf m_{k-1}} m_k$, $k = 2 l + 1$, of $N$ agrees with the definition of \emph{type of $p$} given in \cite{BFOO}. 

\subsection{The Euler integral}
Let $f: \R \to \R$ be as in~\eqref{simples} with $C_1, \ldots, C_k$ connected. 
The \emph{Euler integral} of $f$ is defined by 
$$
\int f d \chi = \sum_{i = 1}^k m_i \chi(C_i), 
$$
where $\chi(A)$ stands for the Euler characteristic of the connected subset $A$ of $\R$. 
So for $N_S$, for instance, $\chi(C_{2i}) = 1$ and $\chi(C_{2 i + 1}) = -1$, $i = 0, \ldots, l$. 

The following theorem establishes the relationship between the \emph{degree of the gradient vector field of $p$ at infinity}, denoted by $\deg_{\infty} \nabla p$ and the Euler integral. 
Recall that for a given polynomial function $p:\R^2\to \R$ having a compact set of critical points, the \emph{degree of the gradient vector field of $p$ at infinity}, denoted by $\deg_{\infty} \nabla p$, is defined by the topological degree of the mapping 
	$ S_r\ni a \to \frac{\nabla  p(a)}{\| \nabla p(a)\|}\in S_1$ where $S_r$ is a circle with the center at the origin and radius $r$ big enough.
Equivalently, $\deg_{\infty} \nabla p$ agrees with the sum of the index of $\nabla p$ over all its finite equilibrium points. 

The following result is a formulation of S\c{e}kalski's theorem \cite{sekalski}.  

\begin{theorem}[{\cite[Theorem 7]{Gii}}]
Let $p(x,y)$ be a polynomial with finite set of critical points. 
Then 
$$ 
\deg_{\infty} \nabla p = 1 + \int N d\chi.
$$
\end{theorem}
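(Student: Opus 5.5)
The plan is to compute $\deg_\infty\nabla p$ on a large circle $S_r$ and to compare it with an Euler characteristic computation carried out on the closed disc $D_r$ via Fubini for the Euler integral. First I choose $r$ large so that three things hold: all critical points of $p$ lie in the open disc; the circle $S_r$ meets every branch at infinity of every fiber $p^{-1}(c)$ transversally and exactly once; and $p|_{S_r}$ has finitely many critical points. The existence of such $r$ follows from the curve selection lemma at infinity together with the Puiseux/Laurent parametrizations~\eqref{branch}.

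\textbf{Step 1: the degree in terms of $p|_{S_r}$.} Writing $\nabla p$ on $S_r$ in the frame (outward normal, tangent), the tangential component vanishes exactly at the critical points $z$ of $p|_{S_r}$. At such a point the radial derivative $\partial_r p(z)$ is nonzero, for $r$ generic and large. Counting the rotation of $\nabla p$ relative to the normal frame, which itself has degree $1$, gives
$$
\deg_\infty\nabla p = 1 + \tfrac12\bigl(\#\{\text{min of } p|_{S_r},\ \partial_r p>0\}+\#\{\text{max},\ \partial_r p<0\}-\#\{\text{min},\ \partial_r p<0\}-\#\{\text{max},\ \partial_r p>0\}\bigr).
$$
Note that the first two sets consist of the points where the boundary is ``inward'' for the sublevel or superlevel sets.

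\textbf{Step 2: Fubini on the disc.} Since $p|_{D_r}$ is a proper semialgebraic map, I have
$$
1=\chi(D_r)=\int \chi\bigl(p^{-1}(c)\cap D_r\bigr)\,d\chi(c).
$$
I then evaluate $\chi(p^{-1}(c)\cap D_r)$ as a function of $c$.
\begin{itemize}
\item For $c$ different from the finitely many critical values of $p$ and of $p|_{S_r}$, the fiber in $D_r$ is a compact $1$-manifold whose boundary lies on $S_r$. It therefore consists of circles and $N(c)/2$ closed arcs, since each branch at infinity gives exactly one endpoint on $S_r$. Its Euler characteristic is thus $N(c)/2$ plus nothing from the circles.
\item At the finitely many special values, the local changes come from two sources. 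Interior critical points contribute their Morse-type local Euler contributions, whose total is expressed through the indices, i.e.\ $\sum \mathrm{ind}\nabla p=\deg_\infty\nabla p$ by the remark preceding the theorem. The boundary critical points of $p|_{S_r}$ each add or remove a small arc, which changes $N(c)$ by $\pm2$.
\end{itemize}
Assembling these contributions, the Euler integral of the generic part is $\tfrac12\int N\,d\chi$ corrected at the jump points.

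\textbf{Step 3: eliminating the boundary terms.} The boundary critical points appear both in Step 1, through the signed count, and in Step 2, as the jumps of $N$ across values of $p|_{S_r}$. I would combine the two identities so that these counts cancel, leaving
$$
\deg_\infty\nabla p = 1+\int N\,d\chi.
$$
I would sanity-check the normalization on $p=x$, where $N\equiv1$, $\int N\,d\chi=-1$ and $\deg=0$, and on $p=x^2+y^2$, where $\int N\,d\chi=0$ and $\deg=1$.

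\textbf{Main obstacle.} The hard step is the careful bookkeeping in Steps 2--3. One must match the local Euler contributions at the special values $c$, where $N$ jumps and the Euler integral weights points by $+1$ and intervals by $-1$, with the signed counts of boundary extrema in Step 1. A sign or factor-of-$2$ error is easy to make here. I would first try to avoid this by replacing $D_r$ with the exterior region $\{|z|\ge r\}$, where the fibers are exactly the $N(c)$ half-branches, and using compactly supported Euler characteristics. There the only contributions are the boundary critical points, which makes the cancellation more transparent. If that turns out to be awkward, I would fall back on simply citing S\c{e}kalski~\cite{sekalski} in the form of \cite[Theorem 7]{Gii}.
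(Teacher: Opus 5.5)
The paper gives no proof of this statement. It simply quotes S\c{e}kalski's theorem in the form \cite[Theorem 7]{Gii}, so your fallback is exactly what the authors do. The argument you sketch, however, has a genuine gap.

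Your Step~1 is correct. Step~2, though, conflates $N(c)$, which depends only on the fiber near infinity, with the $r$-dependent count $n_r(c)=\#\bigl(p^{-1}(c)\cap S_r\bigr)$. No single $r$ makes $S_r$ meet the ends of every fiber exactly once. For $p=x$, the fibers with $|c|>r$ miss $D_r$ entirely although $N(c)=1$, and near a value where $N$ jumps no fixed circle works either. Moreover, each branch at infinity has two half-branches, so for good $c$ you get $N(c)$ arcs, not $N(c)/2$; your own test $p=x$ gives one arc. A tangency of a fiber with $S_r$ changes $n_r$ by $\pm2$, not $N$.

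Once this is corrected, Step~2 becomes pure bookkeeping on $D_r$:
\begin{itemize}
\item generically $\chi(p^{-1}(c)\cap D_r)=\tfrac12 n_r(c)$, and $\int\tfrac12 n_r\,d\chi=\tfrac12\chi(S_r)=0$;
\item an interior critical point $z$ adds $1-m_z=\mathrm{ind}_z\nabla p$, where $2m_z$ is the number of half-branches of $p=p(z)$ at $z$;
\item a tangency with $S_r$ adds $+\tfrac12$ or $-\tfrac12$ according as the level curve touches $S_r$ from outside or from inside.
\end{itemize}
The outcome is $1=\deg_\infty\nabla p+\tfrac12\bigl(\#\{\max,\partial_rp>0\}+\#\{\min,\partial_rp<0\}-\#\{\max,\partial_rp<0\}-\#\{\min,\partial_rp>0\}\bigr)$. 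This is exactly your Step~1 identity, so Step~3 cancels everything and returns a tautology in which $N$ never appears. What is missing is the actual content of S\c{e}kalski's theorem: relating the signed boundary extrema as $r\to\infty$ (the ends of the polar curve $xp_y-yp_x=0$ and the limits of $p$ along them) to the jumps of $N$.

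Your exterior-region idea points the right way, but no cutting is needed at all. The paper's Euler integral uses the compactly supported (o-minimal) Euler characteristic, with $\chi$ of an open interval equal to $-1$. For it, the Fubini formula $\chi(X)=\int\chi(f^{-1}(c))\,d\chi(c)$ holds for every semialgebraic map $f$, proper or not, by Hardt's semialgebraic triviality. Apply it to $p\colon\R^2\to\R$, whose left side is $\chi(\R^2)=1$.

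Each fiber is a graph in which a critical point $z$ is a vertex of valence $2m_z$, every other point has valence $2$, and there are exactly $2N(c)$ unbounded edges (the half-branches of \eqref{branch}). Counting vertices minus open edges gives $\chi(p^{-1}(c))=\sum_z(1-m_z)-N(c)$, the sum running over the critical points in the fiber. Now use the classical local formula $\mathrm{ind}_z\nabla p=1-m_z$, which also follows from your Step~1 applied to a small circle about $z$. Integrating over $c$ gives $1=\deg_\infty\nabla p-\int N\,d\chi$, which is the statement. For instance, for $p=x^2-y^2$ the fibers over $c<0$, $c=0$, $c>0$ have $\chi=-2,-3,-2$, and $(-2)(-1)+(-3)+(-2)(-1)=1$.
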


In the particular case of a submersion, the degree in the left is zero. 
Hence, we easily conclude the following result.

\begin{corollary}\label{corSek}
If $p(x,y)$ is a polynomial submersion, then 
$$
\int N d \chi = -1.
$$
\end{corollary}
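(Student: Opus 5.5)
The plan is to specialize S\c{e}kalski's theorem, stated just above, to the case of a submersion. I will first check its hypothesis: a polynomial submersion $p$ satisfies $\nabla p(a)\neq 0$ for every $a\in\R^2$, so its set of critical points is empty and in particular finite. The theorem therefore applies and gives
$$
\deg_{\infty}\nabla p = 1 + \int N\, d\chi .
$$

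Next I will show that $\deg_\infty \nabla p = 0$. I will use the description recalled before the theorem: $\deg_\infty\nabla p$ equals the sum of the indices of $\nabla p$ over its finite equilibrium points. A submersion has no such points, so the sum is empty and equals $0$.

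If one prefers the definition through the degree of $a\mapsto \nabla p(a)/\|\nabla p(a)\|$ on a large circle $S_r$, the same conclusion follows directly. Since $\nabla p$ never vanishes, this map extends to the whole closed disc of radius $r$. Contracting the circle $S_r$ to a point inside the disc then gives a homotopy from this map to a constant map, so its degree is $0$.

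Substituting $\deg_\infty\nabla p=0$ into the identity above gives $0 = 1+\int N\,d\chi$, that is, $\int N\,d\chi=-1$.

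There is no real obstacle here. The only point to keep in mind is that $N$ is a legitimate constructible function to integrate. This holds by Thom's finiteness of the bifurcation values, as explained in the subsection on the type of $N_S$. By Remark~\ref{45r}, $N(c)$ is also the number of connected components of $p^{-1}(c)$, so the identity can be read in the paper's terminology of the type of $p$.
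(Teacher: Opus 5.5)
Your proof is correct and follows the paper's route: apply S\c{e}kalski's theorem, note that $\deg_\infty \nabla p = 0$ for a submersion, and conclude $\int N\,d\chi=-1$. The paper simply asserts the vanishing of the degree, whereas you justify it, either as an empty sum of indices or via the null-homotopy of $\nabla p/\|\nabla p\|$ on $S_r$.
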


\begin{remark}\label{rem3}
As a direct consequence, when $p$ is a polynomial submersion and $N$ is constant, it follows that $N=1$, and so $p$ is a locally trivial fibration. 
From Lemma \ref{bounded}, this is the case when $p$ a polynomial submersion and $p$ is nondegenerate. 
\end{remark}

\begin{example}
Let $p(x,y) = 1+ x + y + x^2 y + 2 x y^2 + y^3$. 
The Newton polygon of $p$ is depicted in Fig~\ref{broughton}. 
\begin{figure}[htbp]
\begin{center}
{
\begin{tikzpicture}
\draw[-] (0.5,0)--(0.5,2.5);
\draw[-] (1,0)--(1,2.5);
\draw[-] (1.5,0)--(1.5,2.5);
\draw[-] (2,0)--(2,2.5);
\draw[-] (2.5,0)--(2.5,2.5);
\draw[-] (0,0.5)--(2.5,0.5);
\draw[-] (0,1)--(2.5,1);
\draw[-] (0,1.5)--(2.5,1.5);
\draw[-] (0,2)--(2.5,2);
\draw[-] (0,2.5)--(2.5,2.5);
\draw[->,thick](0,0)--(0,3);
\draw[->,thick](0,0)--(3,0);
\draw[-,red,thick](0,1.5)--(1,0.5);
\draw[-,green,thick](1,0.5)--(0.5,0);
\node at (0.66,1.15){{$R$}};
\node at (0.85,0.18){{$S$}};
\end{tikzpicture}}
\end{center}
\caption{The Newton polygon of $p(x,y)$.}\label{broughton}
\end{figure}
It has two outer edges $R$ and $S$ as in the picture. 

The polynomial $p$ is nondegenerate on the edge $S$ and $\overline{N}_S = 1$, so $N_S = \ind_{\R}$ from Lemma~\ref{bounded}. 
Edge $R$ is such that $\overline{N}_R = 2$, so by the same lemma, $N_R(c) \in \{0,1,2\}$ for any $c \in \R$. 

Actually, we can write $p(x,y) = 1 + q(x+y,y)$ with $q(x,y) = x (1 + x y)$. 
The polynomial $q(x,y)$, largely known as Broughton example, is a submersion with $B(q) = \{0\}$, and such that $q^{-1}(0)$ has $3$ connected components and $q^{-1}(c)$ has two connected components for any $c\neq 0$. 
Then $N_R = \ind_{(-\infty, 1)} + 2 \ind_{\{1\}} + \ind_{(1, \infty)}$ and $N = 2 \ind_{(-\infty, 1)} + 3\ind_{\{1\}} + 2 \ind_{(1, \infty)}$
The Euler integral of $N$ calculates as $2 \cdot (-1) + 3 \cdot (1) + 2 \cdot (-1) = -1$. 
\end{example}

The following immediate consequence of Corollary~\ref{corSek} recovers Theorem 2.12 in \cite{BFOO}, where it was proved without using the Euler integral. 

\begin{corollary}
If $p$ is a polynomial submersion of type $m_1 {\bf m_2}m_3\cdots {\bf m_{k-1}} m_k$, $k = 2 l + 1$, then 
$$
\sum_{i=0}^{\frac{k-1}{2}} m_{2 i + 1} = 1 + \sum_{i=1}^{\frac{k-1}{2}} m_{2 i}. 
$$ 
\end{corollary}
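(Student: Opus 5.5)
The plan is to read the identity off Corollary~\ref{corSek}, computing the Euler integral of $N$ directly from its type. Since $p$ is a polynomial submersion, Remark~\ref{45r} shows that $N(c)$ is the number of connected components of $p^{-1}(c)$. I will take $\{c_1<\dots<c_l\}$ to be the set of jump points used to define the type, so that $N=\sum_{i=1}^k m_i\ind_{C_i}$ with $k=2l+1$. The sets are the open intervals $C_1=(-\infty,c_1)$, $C_3=(c_1,c_2)$, \dots, $C_k=(c_l,\infty)$, together with the singletons $C_{2i}=\{c_i\}$.

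First I record the Euler characteristics. Each $C_{2i+1}$, for $i=0,\dots,l$, is an open interval (bounded or not), so $\chi(C_{2i+1})=-1$. Each $C_{2i}$, for $i=1,\dots,l$, is a point, so $\chi(C_{2i})=1$. This is the convention already fixed in the paper when the Euler integral was introduced.

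Next I plug these values into the definition of the Euler integral:
$$
\int N\,d\chi=\sum_{i=1}^{l} m_{2i}-\sum_{i=0}^{l} m_{2i+1}.
$$
Corollary~\ref{corSek} says this equals $-1$. Rearranging gives $\sum_{i=0}^{(k-1)/2} m_{2i+1}=1+\sum_{i=1}^{(k-1)/2} m_{2i}$, which is the claim, since $l=(k-1)/2$.

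There is no real obstacle here. The one point to check is that the decomposition into the $C_i$ is a legitimate one for the Euler integral, i.e.\ the $C_i$ are connected and pairwise disjoint, so the integral is well defined. It is also independent of the chosen decomposition: a redundant jump point with $m_{i-1}=m_i=m_{i+1}$ adds $m_i\cdot(1-1)=0$ to both sides. If $l=0$, the statement reduces to $m_1=1$, matching Remark~\ref{rem3}.
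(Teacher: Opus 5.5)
Your proof is correct and follows the paper's own route: the paper calls this an immediate consequence of Corollary~\ref{corSek}. You evaluate $\int N\,d\chi$ on the type decomposition, using $\chi=-1$ on the open intervals and $\chi=1$ on the jump points, and set the result equal to $-1$, exactly as intended.
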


\section{Global dynamics of Hamiltonian vector field for eliminating Jacobian mates}\label{ds}
\subsection{Poincar\'e compactification}\label{Pc}
Any planar polynomial vector field $\mathcal{X}$ can be compactified by means of the \emph{Poincar\'e compactification of $\mathcal{X}$}, giving rise to a vector field $p(\mathcal{X})$ in $\R \mathbb P^2$, in such a way that the invariant flow on the line at infinity corresponds to the behavior of $\mathcal{X}$ at infinity, and the flow on the interior of the disc is the transformed flow up to multiplication by a suitable positive function. 

There is an appealing geometric construction of this compactification, summarized as follows. 
We first identify $\R^2$ with the affine tangent plane $\pi$ of the bi-dimensional unit sphere $\mathbb{S}_2$ in the north pole. 
Each straight line non parallel to $\pi$ through the origin of $\R^3$ cuts $\pi$ in one point and $\mathbb{S}_2$ in two points. 
This gives two analytic diffeomorphisms $f^+$ and $f^-$ from $\pi$ onto the north and south hemispheres of $\mathbb{S}_2$, respectively. 
Through $f^+$ and $f^-$ we get two copies of $\mathcal{X}$ in $\mathbb{S}_2$, one in the north and one in the south hemisphere, respectively. 
It turns out that by multiplying these copies by $x_3^{m-1}$, where $m$ is the degree of $\mathcal{X}$ and $(x_1, x_2, x_3)$ are the coordinates of $\R^3$, we get a polynomial vector field in the entire $\mathbb{S}_2$, called the \emph{Poincar\'e compactification of $\mathcal{X}$}, whose restrictions to each of the hemisphere is topologically equivalent to $\mathcal{X}$. 
By construction, the equator of $\mathbb{S}_2$ is invariant by the flow. 
The points at the equator are said the \emph{infinite} points of $\R^2$, and the singular points of the Poincar\'e compactification of $\mathcal{X}$ at the equator are said the \emph{infinite singular points of $\mathcal{X}$}. 
It is clear that $\mathcal{X}$ in $\R^2$ together with its infinite points is fully qualitatively understood in the unit disc, our model for $\R \mathbb P^2$: simply project the closure of the north hemisphere of $\mathbb{S}_2$ onto the disc. 
Since the antipodal points of $\mathbb{S}_2$ correspond to the same point in the original plane $\R^2$, in order to study the Poincar\'e compactification of $\mathcal{X}$ we only need the local canonical charts $(U_i,\phi_i)$, where $U_i = \{(x_1,x_2,x_3)\in \mathbb{S}_2\ |\ x_i > 0\}$, with $\phi_i(x) = (x_{j_1}/x_i, x_{j_2}/x_i)$, $j_1 < j_2 \in \{1,2,3\}\setminus \{i\}$, $i=1,2,3$. 
It is simple to see that both composite applications $\phi_i \circ f^+(x_1,x_2)$ and $\phi_i \circ f^-(x_1,x_2)$ give the expression $(x_j/x_i,1/x_i)$, $j\neq i$, $i, j \in \{1,2\}$.  
So if in the original plane we want to study the infinite point given by a direction $w = (w_1, w_2)$, with $w_i \neq 0$, we use the transformation $(u,v) = (x_j/x_i,1/x_i)$, $j\neq i$, defined in $x_i\neq 0$, nothing more than the \emph{homogeneous coordinates} of $\R \mathbb P^2$. 
For more details on the Poincar\'e compactification, we address the reader to \cite[Chapter 5]{DLA}. 

\subsection{The Hamiltonian vector field, hyperbolic sectors at infinity, inseparable orbits and Jacobian mates}
For any polynomial function $f: \R^2 \to \R$, we define the \emph{Hamiltonian vector field associated with $f$} by $\nabla f^{\perp} = \left(-f_y, f_x\right)$. 
The main property of $\nabla f^{\perp}$ is that each of its orbits is contained in a fiber of $f$, and when $f$ is a submersion, each orbit agrees with a connected component of a fiber, as can be readily checked. 

We will freely use the \emph{sectorial decomposition} of isolated singular points of polynomial vector fields into \emph{hyperbolic}, \emph{elliptic}, and \emph{parabolic sectors}; see \cite[Chapter 1]{DLA} for details. 

For a given hyperbolic sector $h$ of an isolated singular point of a vector field, the two arcs of orbits in the boundary of $h$ are said the \emph{separatrices} of $h$. 
We denote them by $s_1(h)$ and $s_2(h)$. 
Next result is Theorem 3.5 of \cite{BFOO}. 

\begin{theorem}[\cite{BFOO}]\label{hyperbolic}
Let $p(x,y)$ be a polynomial submersion such that $\gamma_1$,..., $\gamma_{k+1}$, $k > 1$, are distinct orbits of $\nabla p^{\perp}$, contained in a fiber of $p$ such that, for $i = 1, \ldots, k$, 
\begin{enumerate}[label={\textnormal{(\roman*)}}]
\item\label{primer} there exists a hyperbolic sector $h_i$ of $\nabla p^{\perp}$ at the infinite singular point $z_i$ such that $\gamma_i$ contains $s_1(h_i)$ and $\gamma_{i+1}$ contains $s_2(h_i)$.  
\item\label{segon} $z_i$ is the $\omega$-limit of $\gamma_i$ and the $\alpha$-limit of $\gamma_{i+1}$.  
\end{enumerate}
If $z = z_1 = z_{k}$ and there is a branch of an algebraic curve at the infinite point $z$ with one half-branch contained in $h_1$ and the other contained in $h_{k}$, then $p$ has no Jacobian mates. 
\end{theorem}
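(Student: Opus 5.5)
The plan is to argue by contradiction: assume $q$ is a Jacobian mate of $p$, and set $F=(p,q)$, so that $JF$ has constant sign, say $JF>0$. The basic observation is that along any orbit $\phi(t)$ of $\nabla p^{\perp}$ we have $\frac{d}{dt}q(\phi(t))=\langle \nabla q,\nabla p^{\perp}\rangle = JF(\phi(t))>0$. Hence $q$ is strictly increasing along every orbit in the direction of the flow. Since each $\gamma_i$ lies in a single fiber of $p$, the restriction of $F$ to $\gamma_i$ is a homeomorphism onto an open interval $\{c\}\times I_i$ of the vertical line $p=c$. By \ref{segon}, the right endpoint of $I_i$ is the limit of $q$ at the $\omega$-end of $\gamma_i$ (at $z_i$), and the left endpoint of $I_{i+1}$ is the limit at the $\alpha$-end of $\gamma_{i+1}$ (also at $z_i$).

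The first step is to compare these endpoints using the hyperbolic sector $h_i$. Orbits of $h_i$ close to $s_1(h_i)\cup s_2(h_i)$ lie in fibers $p=c'$ with $c'$ near $c$, and all of them on one side of $c$: $p$ is a submersion, so $p-c$ has a fixed sign on the interior of $h_i$ near its separatrices. Such an orbit runs along $s_1(h_i)$ towards $z_i$ and comes back along $s_2(h_i)$. By continuity of $q$ on compact pieces of $\gamma_i,\gamma_{i+1}$ and monotonicity along the orbit, $F$ maps these orbits onto vertical segments in $p=c'$ that shadow $I_i$ and then $I_{i+1}$. The conclusion I aim for is $\sup I_i\le \inf I_{i+1}$, together with the statement that all of $h_i$ near $z_i$ is mapped by $F$ to one side of the line $p=c$, namely the side on which $p>c$ (or $p<c$) inside $h_i$. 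Chaining $i=1,\dots,k$ shows that $q$ increases as one passes from $\gamma_1$ through $\gamma_2,\dots$ to $\gamma_{k+1}$. It also records, for each $i$, on which side of $c$ the sector $h_i$ sits and at which "height" in $q$ it is glued.

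The second step uses the algebraic branch $\Gamma$ at $z=z_1=z_k$. Parametrize it as in~\eqref{branch}, $t\mapsto s(t)$, $0<|t|\le\epsilon$, with $s((0,\epsilon))\subset h_1$ and $s((-\epsilon,0))\subset h_k$. Then $p\circ s$ and $q\circ s$ are real Laurent series in $t$, so each has a limit in $\R\cup\{\pm\infty\}$ as $t\to0^\pm$. These one-sided limits are either equal and finite, equal infinities (even pole order), or opposite infinities (odd pole order). The half-branch in $h_1$ crosses the orbits of $h_1$ near $z$, so by the first step its $F$-image is trapped near the "gluing height" between $I_1$ and $I_2$ and on the side of $c$ determined by $h_1$. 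Likewise the half-branch in $h_k$ is trapped near the gluing height between $I_k$ and $I_{k+1}$, on the side determined by $h_k$. Because $k>1$, these two gluing heights are separated by the whole nonempty interval $I_2\cup\dots\cup I_k$ of $q$-values. I then want to show that the pair of one-sided limits of $(p,q)\circ s$ is incompatible with the Laurent-series dichotomy above. If the limits are finite, they must coincide, but they are forced to lie at different heights. If they are infinite, the orientation constraint from $JF>0$, read off by the sides of $p=c$ occupied by $h_1$ and $h_k$, forces them to have signs that contradict the parity of the pole order of $q\circ s$ and of $p\circ s - c$.

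I expect the main obstacle to be the precise bookkeeping in this second step, especially when the limits of $q$ along the $\gamma_i$ at infinity are themselves $\pm\infty$. There, "height" has to be replaced by a cyclic or orientation argument: one tracks the winding of $F\circ s$ around a large region, or uses that $F$ is an orientation-preserving local diffeomorphism, so the cyclic order of the images of $h_1$, the $\gamma_i$, and $h_k$ around $F(z)$ is preserved. My first attempt would be the orientation formulation. $F$ maps a small half-disc neighbourhood of the branch point, lifted to the blow-up if necessary, to a germ that must wrap consistently. The chain $h_1,\gamma_2,\dots,h_k$ forces a total turning that a single analytic branch, whose image under $F$ is a Puiseux-type curve with only two ends, cannot realize.
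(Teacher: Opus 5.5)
The paper does not prove this statement; it quotes it as Theorem~3.5 of \cite{BFOO}, so there is no in-text argument to compare your route with. Your strategy closes: $q$ is strictly increasing along orbits, there is a gluing inequality across each $h_i$, and the two one-sided limits of $q$ along the algebraic branch can then be compared. However, you have misplaced the difficulty. The case of infinite limits, which you single out as the main obstacle, cannot occur. The orientation/winding sketch you offer for it is not an argument as written, and it should be dropped rather than developed.

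Your first step already forces finiteness. Fix $Q\in s_2(h_i)$. Orbits of $h_i$ converging to $s_1(h_i)\cup\{z_i\}\cup s_2(h_i)$ pass near any given $P\in s_1(h_i)$ before they pass near $Q$, and $q$ increases along them. In the limit this gives $q(P)\le q(Q)$. Hence $\sup I_i\le q(Q)<\infty$, and symmetrically $\inf I_{i+1}>-\infty$. So every gluing height is a real number and
\[
\sup I_1\le\inf I_2<\sup I_2\le\inf I_3<\cdots<\sup I_k\le\inf I_{k+1}.
\]

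The trapping step should be made precise in the same way. Let $T_1,T_2$ be short transversals through $P\in s_1(h_1)$ and $Q\in s_2(h_1)$. For small $t>0$, the orbit through $s(t)$ crosses $T_1$ before $s(t)$ and $T_2$ after it, so
\[
\min_{T_1}q\le q(s(t))\le\max_{T_2}q.
\]
Shrink the transversals and let $P,Q\to z$. This shows that $\lim_{t\to0^+}q(s(t))$, which exists because $q\circ s$ is a Laurent series, lies in $[\sup I_1,\inf I_2]$. Likewise $\lim_{t\to0^-}q(s(t))\in[\sup I_k,\inf I_{k+1}]$.

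Both limits are finite, so $q\circ s$ has no principal part and the two limits coincide. This contradicts $\inf I_2<\sup I_2\le\sup I_k$, which is exactly where $k>1$ is used. Neither $p\circ s$, nor the side of $p=c$ on which each $h_i$ lies, nor any orientation bookkeeping is needed.
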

See Figure~\ref{hyperbolics} 
\begin{figure}[htbp]
\begin{center}
\psfrag{g1}{\tiny$\gamma_1$}
\psfrag{g2}{\tiny$\gamma_2$}
\psfrag{g3}{\tiny$\gamma_3$}
\psfrag{g4}{\tiny$\gamma_4$}
\psfrag{g5}{\tiny$\gamma_5$}
\psfrag{g6}{\tiny$\gamma_6$}
\psfrag{h1}{\tiny$h_1$}
\psfrag{h2}{\tiny$h_2$}
\psfrag{h3}{\tiny$h_3$}
\psfrag{h4}{\tiny$h_4$}
\psfrag{h5}{\tiny$h_5$}
\psfrag{z1}{\tiny$z_1$}
\psfrag{z2=z3}{\hspace{-.2cm}\raisebox{.2cm} {\tiny$z_2 = z_3$}}
\psfrag{z4}{\tiny$z_4$}
\psfrag{z5}{\tiny$z_5$}
\includegraphics[scale=.28]{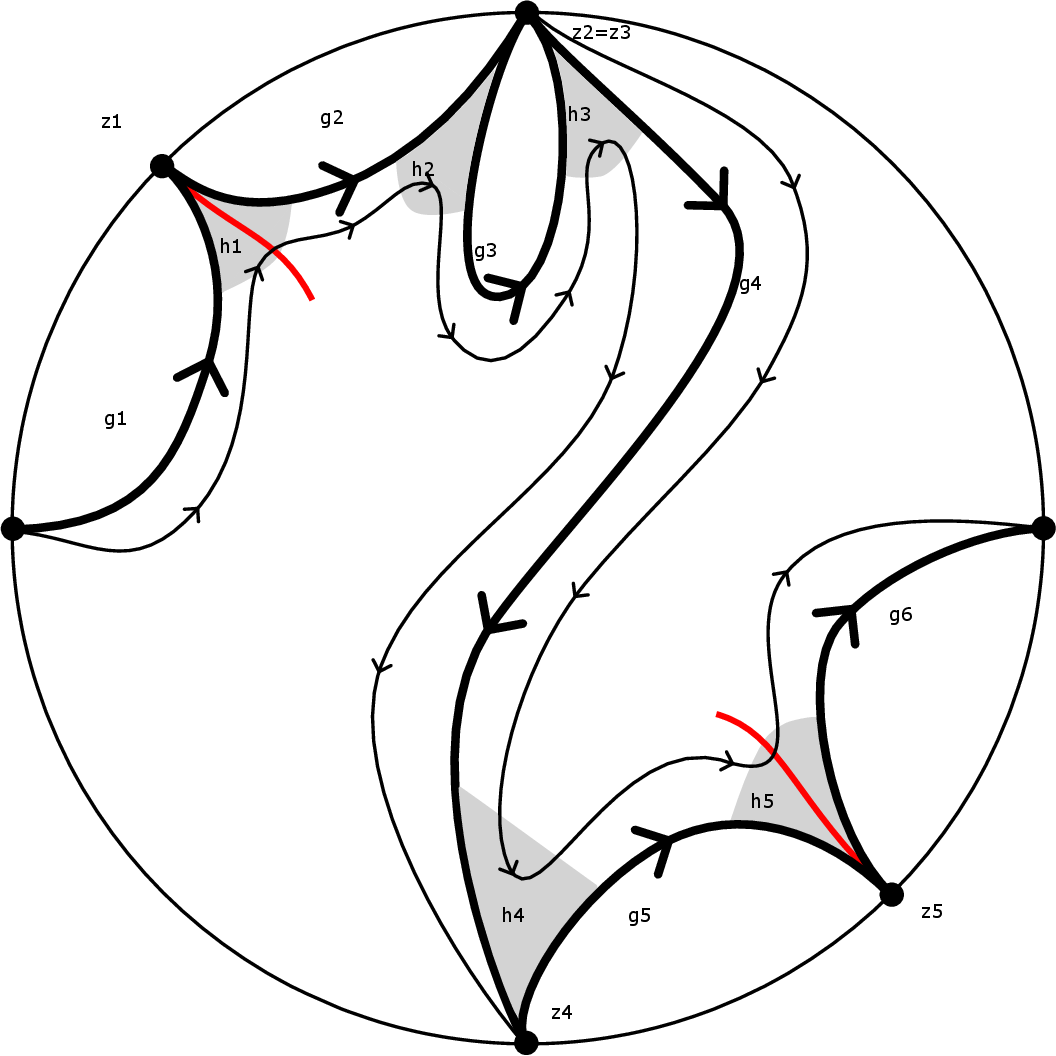}
\end{center}
\caption{A chain of hyperbolic sectors $h_1, \ldots, h_5$ representing conditions~\ref{primer} and~\ref{segon} of theorem \ref{hyperbolic}. 
In red the half-branches of an algebraic curve entering both $h_1$ and $h_5$, representing the last assumption of that theorem}\label{hyperbolics}
\end{figure}
for an example of the configuration described in the theorem. 

Theorem \ref{hyperbolic} will be one of the essential tools we shall use in Section~\ref{mr}, but it is quite difficult to check its hypotheses for a given polynomial submersion.
We therefore develop some sufficient conditions that allow us to do this more directly by using information from the Newton polygon of the polynomial.
For this, one task is to guarantee the existence of hyperbolic sectors at infinity with a branch of an algebraic curve entering them as in the theorem. 
The other is to guarantee that these hyperbolic sectors ``connect'' to each other through a chain of hyperbolic sectors as in \ref{primer} and \ref{segon} of the theorem. 
We close this section with Lemma \ref{connects}, which helps address the second issue.
In the next section, we deal with the first one and combine the resulting statements.

But before stating the lemma, we shall need one further ingredient. 
The orbits $\gamma_i$ and $\gamma_{i+1}$ in Theorem \ref{hyperbolic} are ``inseparable'' in the following sense. 
Assume that $p$ is a submersion. 
Two orbits of $\nabla p^{\perp}$, or equivalently, two connected components of a fiber of $p$, $\alpha$ and $\beta$ are said \emph{inseparable} when for any transversal sections $S$ and $T$ through $\alpha$ and $\beta$, respectively, there exists a third orbit cutting both $S$ and $T$. 
Clearly orbits containing the separatrices $s_1(h)$ and $s_2(h)$ of a hyperbolic sector $h$ are inseparable. 
It is also not difficult to conclude, from the definitions of inseparability and hyperbolic sector that, conversely, given two inseparable orbits $\alpha$ and $\beta$, there exists a sequence of orbits $\gamma_1, \ldots, \gamma_k$ as in~\ref{primer} and~\ref{segon} of Theorem \ref{hyperbolic} such that $\gamma_1 = \alpha$ and $\gamma_k = \beta$ or $\gamma_1 = \beta$ and $\gamma_k = \alpha$. 
For instance, in Figure \ref{hyperbolics} the collections $\{\gamma_i\}_{i\in \{1,2,3,4\}}$ and $\{\gamma_j\}_{j\in \{4,5,6\}}$ are formed by pairwise inseparable orbits. 
For an overview on these remarks for general polynomial vector fields, the reader can consult \cite{JL}. 

\begin{lemma}\label{connects}
Let $p(x,y)$ be a polynomial submersion and $c_0$ be a point of discontinuity of the function $N$ such that either $N(c_0) = 3$ or there exists $\epsilon >  0$ such that $N(c) = 1$ for all $c$ in $(c_0-\epsilon, c_0)$ or in $(c_0, c_0 + \epsilon)$, with $N(c_0)>2$. 
Let $h_{a}$ and $h_{b}$ be two given hyperbolic sectors of $\nabla p^{\perp}$ at infinity whose separatrices are contained in $p^{-1}(c_0)$. 
Assume that $h_{a}$ and $h_{b}$ do not share separatrices. 
Then $h_{a}$ and $h_{b}$ connect in the sense of~\ref{primer} and~\ref{segon} of Theorem~\ref{hyperbolic}, that is, one is $h_1$ and the other is $h_k$ in the notation of the theorem. 
\end{lemma}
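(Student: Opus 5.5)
Our plan is to study the inseparability structure of the components of $p^{-1}(c_0)$ via a graph. Let $G$ be the graph whose vertices are the connected components of $p^{-1}(c_0)$, that is, the orbits of $\nabla p^{\perp}$ in this fiber; by Remark~\ref{45r} there are $N(c_0)$ of them. We join two vertices by an edge when they are the separatrices of a common hyperbolic sector at infinity. Two sectors $h_a,h_b$ connect in the sense of~\ref{primer} and~\ref{segon} of Theorem~\ref{hyperbolic} exactly when there is a chain of hyperbolic sectors, each sharing an orbit with the next and with consistent orientation, running from one to the other. So the goal is twofold. First, the edges of $h_a$ and $h_b$ must lie in one connected component of $G$ and be joined by a path. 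Second, the orientation conditions in~\ref{segon} must be automatically satisfied along this path, possibly after reversing it, which exchanges the roles of $h_1$ and $h_k$.

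\textbf{Connectedness.} Nearby fibers $p^{-1}(c)$, $c\to c_0^\pm$, have $N(c)$ components. Each such component accumulates onto a union of components of $p^{-1}(c_0)$ that are pairwise inseparable, and these form a chain through hyperbolic sectors. Conversely, every hyperbolic sector at infinity with separatrices in $p^{-1}(c_0)$ is traversed by orbits from one side, $c>c_0$ or $c<c_0$.

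\textbf{Case $N=1$ on a side.} Suppose $N(c)=1$ on, say, $(c_0-\epsilon,c_0)$. Then the single nearby orbit on that side, as $c\to c_0^-$, accumulates on a chain passing through all hyperbolic sectors lying on that side. If both $h_a$ and $h_b$ lie on that side, they belong to this one chain, and the chain ordering gives~\ref{primer} and~\ref{segon} directly. A sector on the other side joins two orbits that are both vertices of the first chain. Using $N(c_0)>2$, we will argue that every orbit of $p^{-1}(c_0)$ appears in the first chain, so the graph is still connected.

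\textbf{Case $N(c_0)=3$.} Here $G$ has three vertices. By Corollary~\ref{corSek} and Lemma~\ref{bounded}(v), the values of $N$ near $c_0$ are constrained. Combining the Euler integral relation locally with the requirement that $c_0$ be a discontinuity should force $N(c)\in\{1,2\}$ on the two sides. We then enumerate the possible graphs on three vertices compatible with the limits from each side, and check that any two edges that do not share a vertex are impossible. For two edges among three vertices this is automatic: two distinct edges with no common vertex need four vertices. Hence non-sharing sectors must be separated by an intermediate sector, which yields a path of length at least three.

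\textbf{Orientation.} The step I expect to be the main obstacle is verifying the orientation condition~\ref{segon}: that $z_i$ is the $\omega$-limit of $\gamma_i$ and the $\alpha$-limit of $\gamma_{i+1}$. A path in $G$ need not be coherently oriented. I would handle this with the flow direction of $\nabla p^{\perp}$. Along a chain obtained as the limit of a single nearby orbit, the direction is inherited from that orbit, so coherence is automatic. The difficulty lies in showing that when the path uses sectors from both sides, the parity and count conditions ($N(c_0)=3$, or $N=1$ on one side) force all the relevant sectors onto a single limiting chain. I would first try to prove that in these hypotheses every edge of $G$ lies in the limit chain of some single nearby orbit, and that $h_a,h_b$ lie in the same such chain.
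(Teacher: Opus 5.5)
Your proposal is a plan rather than a proof. It stops exactly at the step you flag, condition~\ref{segon}, and the route you suggest for it fails when $N(c_0)=3$. In that case $h_a$ and $h_b$ need not be crossed by a common nearby orbit. Take the very situation where the lemma is used: Corollary~\ref{one_three} with $N$ of type $2\mathbf{3}2$, arising from $N_S$ of type $1\mathbf{2}1$, that is, $B$ odd. There Theorem~\ref{lattice_points} gives $p<c_0$ on $U_1$ and $p>c_0$ on $U_2$, so no single fiber passes through both sectors.

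Your deduction that non-sharing sectors ``must be separated by an intermediate sector'' is also backwards. The pigeonhole count you wrote down already settles this case:
\begin{itemize}
\item The four separatrices of $h_a,h_b$ are distinct arcs.
\item The two separatrices of one hyperbolic sector lie in different orbits; otherwise the orbits inside the sector would close up, which is impossible for a submersion.
\item $p^{-1}(c_0)$ has only three orbits, so some orbit $\alpha$ carries a separatrix of $h_a$ and one of $h_b$. Since no separatrix is shared, these are the two \emph{different} ends of $\alpha$.
\item Relabel so that $\alpha$ flows from the point of $h_a$ to the point of $h_b$. In a hyperbolic sector one separatrix is incoming and the other outgoing. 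So the remaining separatrix orbit $\gamma_1$ of $h_a$ has its $\omega$-limit there, and the remaining one $\gamma_3$ of $h_b$ has its $\alpha$-limit there.
\end{itemize}
Thus $\gamma_1,\gamma_2=\alpha,\gamma_3$ satisfy~\ref{primer} and~\ref{segon} with $k=2$; this is the paper's proof. Your appeal to Corollary~\ref{corSek} to force $N\in\{1,2\}$ near $c_0$ is both unfounded and unnecessary. That identity is global, and the paper must handle types like $1\mathbf{3}3\mathbf{5}5$, where $N(c_0)=3$ and $N=3$ on one side.

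In the case $N=1$ on one side, say on $(c_0,c_0+\epsilon)$, you leave the key step at ``we will argue''. The paper's argument goes through inseparability. Let $S,T$ be transversal sections through two components of $p^{-1}(c_0)$. Then $p$ is strictly monotone on each, so $p(S)\cap p(T)\supset(c_0,c_0+\bar\epsilon)$ for some $\bar\epsilon>0$. For $c$ in that interval, the connected fiber $p^{-1}(c)$ meets both $S$ and $T$. Hence all components of $p^{-1}(c_0)$ are pairwise inseparable, and the fact recalled just before the lemma (inseparable orbits are joined by a chain as in~\ref{primer}--\ref{segon}) finishes the case. Note that $N(c_0)>2$ is not what makes the nearby fiber approach every component; that comes from $p$ being a submersion.

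Your worry about sectors ``on the other side'' does not arise here, but it needs proof. Let $W$ be the component of $\R^2\setminus p^{-1}(c_0)$ containing the connected set $p^{-1}((c_0,c_0+\epsilon))$; it borders every component of $p^{-1}(c_0)$. Since each such component is a properly embedded separating line, every other complementary region borders only one of them. So every hyperbolic sector whose separatrices lie in two distinct orbits lies in $W$, and only then does your limit-chain idea for the orientation become viable.
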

\begin{proof}
Without loss of generality, we may assume that $c_0=0$. 
	
First we study the case $N(0) > 2$ and $N(c) = 1$ for all $c \in (0, \epsilon)$. 
Clearly, the case $N(c) =1$ for all $c \in (-\epsilon, 0)$ is analogous. 
We will prove that the connected components of $p^{-1}(c_0)$ are pairwise inseparable, which is enough to conclude the lemma in this case because the inseparability will provide the desired connection between the hyperbolic sectors at infinity, as above commented. 

Indeed, let $\alpha$ and $\beta$ be  two connected components of $p^{-1}(c_0)$, and let $S$ and $T$ be transversal sections through $\alpha$ and $\beta$, respectively. 
For any $0 < \bar{\epsilon} \leq \epsilon$ small enough it follows that the intersection $p(S)\cap p(T)\cap (0,\bar{\epsilon})$ is not empty. 
Then, for each $c$ in this intersection, the connected fiber $p^{-1}(c)$ must cut both $S$ and $T$. 
That is, $\alpha$ and $\beta$ are inseparable. 

As for the other case, that is, when $N(0) = 3$, it follows that one separatrix of $h_a$ and one separatrix of $h_b$ must be contained in the same connected component $\alpha$ of $p^{-1}(0)$, otherwise $N(0) > 3$. 
Without loss of generality, we can assume that the curve $\alpha$ as an orbit of $\nabla p^{\perp}$ is oriented such that $\alpha(t)$ moves towards $h_{b}$ as $t$ increases. 
Then, in the notation of Theorem~\ref{hyperbolic}, we put $\gamma_2 = \alpha$, and $\gamma_1$ and $\gamma_3$ to be the orbits containing the remaining separatrices of $h_{a}$ and $h_{b}$, respectively. 
\end{proof}

\section{Algebraic Geometry meets dynamical systems}\label{agmds}
\subsection{Special edges}
\begin{theorem}\label{lattice_points}
Let $p(x,y)$ be a polynomial submersion with a convenient Newton polygon that has an outer edge $S$ with exactly one interior lattice point. Let $\xi=(\xi_1,\xi_2)$ be the outer normal vector to $S$ and let $B=\langle \xi, q \rangle$ where $q=(i_0,j_0)$ is any endpoint of $S$. 
Assume that $N_S$ is nonconstant. 
If the coefficient of $x^{i_0}y^{j_0}$ is positive then: 
\begin{enumerate}[label={\textnormal{(\roman*)}}]
\item for $B$ odd $N_S$ is of type $1{\bf b}1$ where $0\leq  b \leq 2$,  
\item for $B$ even $N_S$ is of type $0{\bf b}2$ where $0\leq b \leq 2$.
\end{enumerate}
Let $c_0$ be the point of discontinuity of $N_S$. 
If $b=1$  then the two half--branches at infinity  of $p(x,y)=c_0$ associated with $S$ bound a germ of an open set $U$ such that $p(a)\to c_0$ for $\| a\|\to \infty$, $a\in U$. 
If $b=2$ then two half--branches at infinity of $p(x,y)=c_0$ associated with $S$ bound a germ of an open set $U_1$ and the remainnig two half--branches at infinity of $p(x,y)=c_0$ associated with $S$ bound a germ of an open set $U_2$. 
We also have  $p(a)\to c_0$ for $\| a\|\to \infty$,  $a\in U_1\cup U_2$. 
In particular, the sets $U_1$ and $U_2$ are hyperbolic sectors of $\nabla p^{\perp}$ at the same point at infinity not sharing separatrices. 
If $B$ is odd then $p(a)<c_0$ for $a\in U_1$ and   $p(a)>c_0$ for $a\in U_2$. 
If $B$ is even  then $p(a)<c_0$ for $a\in U_1\cup U_2$.
Moreover, there exists a branch of an algebraic curve $\gamma$ at infinity having one half-branch contained in $U_1$ and the other contained in $U_2$. 
\end{theorem}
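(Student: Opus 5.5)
We reuse the monomial substitution \eqref{toric} from the proof of Lemma~\ref{bounded}. Since $S$ has exactly one interior lattice point, $\overline N_S=2$. By \eqref{67} the bracket in \eqref{local} then has the form $\delta\prod(u-c_i)^{\nu_i}$ with $\sum\nu_i=2$. If the two roots are distinct (real or complex), $p$ is nondegenerate on $S$ and, by Lemma~\ref{bounded}(iii), $N_S$ is constant. This is excluded by hypothesis. So the bracket must be $(u-u_0)^2$ with $u_0\in\R\setminus\{0\}$ (after normalizing $\delta>0$). The whole analysis then reduces to the germ
$$\tilde P_c(u,v)=(u-u_0)^2+v\tilde p_1(u,v)-c\,u^Av^B=0$$
at $(u_0,0)$. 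Here $B$ is exactly $\langle\xi,q\rangle$, since the exponent of $v$ in \eqref{monomial} is $-\langle\xi,(a,b)\rangle$ and $S$ maps to the horizontal line at height $-B$. The sign of the coefficient of $x^{i_0}y^{j_0}$ fixes the sign of $\delta$, hence the orientation below.

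Next we analyse the local curve. Write $\tilde p_1(u_0,0)=\lambda$. If $\lambda\neq0$, the curve $\tilde P_c=0$ is smooth-ish (tangent to $v=0$ with a Puiseux $v\sim -(u-u_0)^2/\lambda$) for every $c$, giving one branch of type $(u_0,0)_2$ for all $c$. Then $N_S\equiv1$ is constant, again excluded. So $\lambda=0$. We expand $\tilde p_1(u,v)=\mu (u-u_0)+\kappa v+\dots$ and treat the leading quadratic form in $(u-u_0,v)$, namely $w^2+\mu w v+(\kappa-c\,u_0^A\,[B=1])v^2$ plus higher terms. The key point is that the term $c\,u^Av^B$ only enters the Newton polygon of $\tilde P_c$ at $(u_0,0)$ through the single monomial $v^B$, with coefficient $-c u_0^A$ (locally $u^A\approx u_0^A$). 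Changing $c$ therefore moves one coefficient on the $v$-axis of the local Newton polygon. Generically in $c$, the local Newton polygon has a single edge from $(2,0)$ to $(0,m)$ for some $m$, and $c_0$ is the unique value at which the coefficient of $v^m$ vanishes or the edge is degenerate. Away from $c_0$ the germ is $w^2=\phi_c(v)\,v^{m}$ with $\phi_c(0)\neq0$, and this coefficient is affine in $c$ when $m=B$. The count is then read off directly:
\begin{enumerate}[label=\textnormal{(\alph*)}]
\item for $m$ odd, there is one branch for every $c\ne c_0$;
\item for $m$ even, there are $2$ or $0$ branches according to the sign of $\phi_c(0)$, i.e.\ of $c-c_0$.
\end{enumerate}
To match parity with $B$ we argue that $m=B$ is forced: if the Newton polygon were determined by monomials of $\tilde p_1$ below height $B$, the count would be independent of $c$, which is excluded. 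Combining (a), (b) and Lemma~\ref{bounded}(i),(v) gives types $1\mathbf b1$ and $0\mathbf b2$ with $0\le b\le2$. Which side carries the $2$ follows from the sign of $u_0^A$ together with the positive coefficient.

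For the geometric statements, we take $c=c_0$. If $b=1$, the single branch of type $(u_0,0)_k$ maps under $\sigma$ to a branch at infinity whose two half-branches bound a region. On that region $\tilde P_{c_0}$ has constant sign, and $p-c_0=u^{-A}v^{-B}\tilde P_c\cdot(\ldots)$ tends to $0$, because $v\to0$ and the bracket is $O$(curve distance) while $u^{-A}v^{-B}$ is compensated along the cusp. If $b=2$, the two branches through $(u_0,0)$ are tangent to $v=0$ on the two sides $v>0$ and $v<0$, and they cut out the sectors $U_1$ and $U_2$. The sign of $p-c_0$ in each is the sign of $-c\,v^B$ perturbation, which equals $\operatorname{sign}(v)^B$: it is opposite in $U_1$ and $U_2$ for $B$ odd and equal for $B$ even. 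Since $p\to c_0$ inside them, level curves $p=c$ near $c_0$ enter and leave through the same infinite point, which is what makes these hyperbolic sectors of $\nabla p^\perp$. Finally, the algebraic branch $\gamma$ is taken as the $\sigma$-image of the line $u=u_0$, or of the central curve $w=-\mu v/2$, through $(u_0,0)$. Its two half-branches $v>0$ and $v<0$ lie in $U_1$ and $U_2$ respectively, and under $\sigma$ it becomes a branch at infinity of an algebraic curve.

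The main obstacle is the middle step: rigorously showing that nonconstancy forces the local Newton polygon at $(u_0,0)$ to be the single edge $[(2,0),(0,B)]$, with $c$ entering only through the $v^B$ coefficient. The competing monomials of $\tilde p_1$, such as $wv^j$ terms, must be handled so that they cannot create additional branches or alter parities. I would try a Tschirnhaus shift $w\mapsto w-\tfrac12(\text{coefficient of }w)$ to remove linear-in-$w$ terms first.
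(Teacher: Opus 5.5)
You follow the paper's route: the toric chart \eqref{toric}, reduction to a real double root $u_0$ of the edge polynomial, and reading $N_S$ off the real $A_k$ singularities, with the parity of $B$ deciding between $A_{\rm odd}$ and $A_{\rm even}^{\pm}$. However, the step you flag as ``the main obstacle'' is the heart of the proof, and the local Newton-polygon argument you sketch in the coordinates $(w,v)$, $w=u-u_0$, does not work. Terms $wv^j$ coming from $v\tilde p_1$ can make the local edge degenerate, e.g.\ $(w+v^j)^2-cv^B$ with $2j<B$. Then neither ``the polygon is the single edge $[(2,0),(0,B)]$'' nor the parity count follows.

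The missing idea is the Splitting Lemma, used as in the paper. Set $H(w,v):=v^B\,p(\sigma(w+u_0,v))$. This absorbs $u^{-A}$, so $p=c$ becomes \emph{exactly} $H=cv^B$. Since $H(w,0)=\theta w^2(1+\cdots)$ with $\theta=\delta u_0^{-A}\neq0$, an analytic change $(w,v)\mapsto(\phi_1(w,v),v)$ fixing $v$ brings $H$ to $\pm w^2+h(v)$. Hence $c$ still enters only through $cv^B$. Your Tschirnhaus shift is this idea in rudimentary form (Weierstrass preparation plus completing the square), but it has to be carried out.

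Next, the submersion hypothesis, which your proposal never uses, gives $h\not\equiv0$. Otherwise $p-c_0$ vanishes to second order along a branch at infinity, so $\nabla p=0$ there. Normalising $c_0=0$, discontinuity at $0$ forces $m:=\operatorname{ord}h>B$. Then for every $c\neq0$ the leading term of $h(v)-cv^B$ is $-cv^B$, which gives both the uniqueness of the jump and the types.

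This also repairs your separate case $\lambda\neq0$, which is wrong for $B=1$. There $\lambda$ competes with $-cu_0^A$ in the coefficient of $v$, so $N_S$ can jump and one must shift $c$ first. Also, $[B=1]$ in your quadratic form should be $[B=2]$.

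The geometric part inherits this gap, and your choice of $\gamma$ fails in general. In normal-form coordinates, for $b=2$ one has $m$ even, and $U_1,U_2$ are the horns $w^2<|h(v)|$, $\pm v>0$. In your coordinates they are horns of width $\asymp|v|^{m/2}$ around the analytic centre curve $\{\partial_wH=0\}$. As soon as $B\ge2$ you have $m\ge4$. Then neither the line $u=u_0$ nor the first-order approximation $w=-\mu v/2$ of the centre stays inside the horns. For instance, for $H=(w-v^2)^2-v^6$ the horns have width $|v|^3$ around $w=v^2$.

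The correct $\gamma$ is the centre curve itself. Since $\partial_u(p\circ\sigma)=\bigl((\eta_1xp_x+\eta_2yp_y)\circ\sigma\bigr)/u$, its image under $\sigma$ is a branch at infinity of the algebraic curve $\eta_1xp_x+\eta_2yp_y=0$. The normal form also makes ``$p\to c_0$'' rigorous: $|f|\le|v^{-B}h(v)|=O(|v|^{m-B})$ on the horns, in place of ``compensated along the cusp''. It shows that the sign of $p-c_0$ there is $-\operatorname{sign}(\theta)\operatorname{sign}(v)^B$, not $\operatorname{sign}(v)^B$.

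Finally, you are right that the orientation depends on the sign of $u_0^{-A}$. The paper's proof tacitly takes $c_1^{-A}>0$ when it writes $H(u,0)=u^2+\cdots$. But this does not ``follow'' from positivity of the endpoint coefficient, which only gives $\delta>0$. For $p_S=xy(x+y)^2$ (so $B=4$), take $u=y/x$, $v=1/x$. Then $p_S\circ\sigma=u(1+u)^2v^{-4}<0$ near $u=-1$, so the two branches occur for $c<c_0$. The orientation is guaranteed when the endpoints of $S$ have even coordinates, since then $x^ry^s\ge0$; this covers every use in Section~\ref{mr}.
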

\begin{proof}
Without loss of generality we can assume that $c_0=0$ by replacing $p$ with $p-c_0$ if necessary. 
If $p$ is nondegenerate on $S$ then by (iii) of Lemma~\ref{bounded} $N_S$ is constant.  Therefore $p$ is degenerate on $S$. Given the assumpion on the number of interior lattice point on $S$, $p_S(x,y)$ has the form $\delta x^r y^s (y^{\xi_1} - c_1 x^{\xi_2})^2$. 
In particular, it readily follows that $B = \langle \xi, (r, s)\rangle + 2 \xi_1 \xi_2$. 
Without loss of generality we may assume that $\delta = 1$. 
	
Using the substitution $\sigma$ from the proof of Lemma~\ref{bounded} we get $p(\sigma(u,v)) = u^{-A}v^{-B}\bigl((u-c_1)^2 +  v \hat p_1(u,v)\bigr)$, according to \eqref{local}, for suitable $A \geq 0$ and $B = \langle \xi, (r,s)\rangle + 2 \xi_1 \xi_2$, from \eqref{monomial} and \eqref{binomial}, agreeing with the $B$ from the hypothesis. 
As in the proof of that lemma the problem of counting the branches at infinity of a curve $p(x,y)=c$ that are associated with the edge $S$ reduces to counting the branches at $(c_1,0)$ of a meromorphic curve $p(\sigma(u,v))=c$. 
Substituting $u$ by $u+c_1$ we get $p(\sigma(u+c_1,v))=(u+c_1)^{-A}v^{-B}\bigl(u^2+v\hat p_1(u+c_1,v)\bigr)=v^{-B}H(u,v)$ where $H(u,v)$ is an analytic function defined  in the neighborhood of the origin such that $H(u,0)=u^2+\mbox{higher order terms}$.
	
By the Splitting Lemma (see \cite[Lemma C.6.1]{H}), after an analytic changing of coordinates of the form $\phi(u,v)=(\phi_1(u,v),v)$, we have $H(\phi_1(u,v),v)=u^2+h(v)$ where $h$ is a germ at the origin of an analytic function, $h(0)=0$. 
	
Let $\Psi(u,v)=\sigma(\phi_1(u,v)+c_1,v)$. 
Then  
$$ 
f(u,v):=p(\Psi(u,v))=v^{-B}(u^2+h(v)) 
$$
for $(u,v)$ close to the origin such that $v\neq0$. 
	
The function $h$ does not vanish identically, since otherwise $\Psi(0,v)$ would be a double branch at infinity of the curve $p(x,y)=0$, a contradiction because $p$ is a submersion. 
Thus $h(v) = av^m + \mbox{higher order monomials}$, $a\neq 0$. 
	
The rest of the proof relies on classification of germ of analytic curves of the form $F(u,v)=u^2+r(v)=0$ where $r(v)=ev^s+\mbox{higher order monomials}$, $e\neq 0$. 
They are of three types: 
\begin{description}
\item[$A_{{\rm even}}^+$] if $s$ is even and $e>0$ then $F(u,v)=0$ has no real branches, 
\item[$A_{\rm even}^-$] if $s$ is even and $e<0$ then $F(u,v) = 0$ has two real branches $u=-\sqrt{-r(v)}$ and $u=\sqrt{-r(v)}$. 
In particular, $F(u,v) = 0$ has two half-branches in the plane $v>0$ and two half-branches in the plane $v<0$.
\item[$A_{\rm odd}$] if $s$ is odd then $F(u,v)=0$ has one real branch. 
It has two half branches $u=-\sqrt{-r(v)}$ and $u=\sqrt{-r(v)}$ in the half-plane $v>0$ (respectively, in the half plane $v<0$) if $e$ is negative (respectively, $e$ is positive). 
\end{description}
	
We now apply this remark to the meromorphic curve $f(u,v)=c$ or equivalently to the analytic curve $u^2+h(v)-cv^B=0$.  
If $B\geq m$ then the term $-cv^B$ does not alter the type of the singularity for $|c|<|a|$. 
In this case the function $N_S$ is locally constant at zero and is therefore continuous at $c_0=0$. Therefore $B<m$ and we consider the branches at the origin of the curve $F_c(u,v)=0$ where $F_c(u,v)=u^2-cv^{B}+av^m+\mbox{higher order monomials in } v$.
	
Assume that $B$ is odd. 
In this case a singularity $F_c(u,v)=0$ is of type $A_{\rm odd}$ for $c\neq 0$ which implies that $N_S$ is of type $1{\bf b}1$. 
Now assume that $B$ is even. In this case a singularity $F_c(u,v)=0$ is of type $A_{\rm even}^+$ for $c<0$ and of type $A_{\rm even}^-$ for $c>0$ which implies that $N_S$ is of type $0{\bf b}2$. 
This concludes (i) and (ii) of the theorem.

\medskip

\noindent
\textbf{Claim~1.} Assume that the curve $F_0(u,v)=0$ has two half-branches in the upper half-plane. 
Consider the set 
$$
U_{\epsilon}^+=\{\,(u,v)\in \R^2:  -\sqrt{-h(v)} < u < \sqrt{-h(v)},\ 0< v<\epsilon\,\}
$$ 
where $\epsilon>0$ is small enough. 
Then the function $f$ has a constant sign in $U_{\epsilon}^+$ and $f(u,v)\to 0$ as $(u,v)\to(0,0)$, $(u,v)\in U_{\epsilon}^+$. 
\begin{proof}[Proof of Claim 1]
For any point $(u,v)\in U_{\epsilon}^+$ we have $u^2 < -h(v)$. 
Thus $f(u,v)  < 0$.
Since $f(0,v)=av^{m-B}+\mbox{higher order terms}$, we have  $\lim_{v\to 0}f(0,v)=0$, 
Thus by inequalities $f(0,v)\leq f(u,v)<0$ we get that $f(u,v)\to 0$ as $(u,v)\to(0,0)$, $(u,v)\in U_{\epsilon}^+$. 
\end{proof} 
	
\medskip
	
An obvious modification of the above proof gives an analogous statement for the lower half-plane. 
	
\medskip

\noindent
\textbf{Claim~2.} 
Assume that the curve $F_0(u,v)=0$ has two half-branches in the lower half-plane. Consider the set $$
U_{\epsilon}^-=\{\,(u,v)\in \R^2:  -\sqrt{-h(v)} < u < \sqrt{-h(v)},\ -\epsilon<v<0\,\}
$$ 
where $\epsilon>0$ is small enough. Then the function $f$ has a constant sign in $U_{\epsilon}^-$ and $f(u,v)\to 0$ as $(u,v)\to(0,0)$, $(u,v)\in U_{\epsilon}^-$. 
	
\medskip
	
If $b=1$ then the curve germ $F_0(u,v)=0$ is of type $A_{\rm odd}$.  
This curve germ has two half-branches in the upper half-plane or two half-branches in the lower half-plane. 
Then $\Psi(U_{\epsilon}^+)$ or $\Psi(U_{\epsilon}^-)$ is a germ of an open set in  $\R^2$ bounded by half-branches at infinity of $p(x,y)=0$ associated with $S$.  	
	
\medskip
	
If $b=2$ then the curve germ $F_0(u,v)=0$ is of type $A_{\rm even}^-$.  In particular $m$ is even and $a<0$. 
This curve germ has two half-branches in the upper half-plane and two half-branches in the lower half-plane. 
Then $U_1 = \Psi(U_{\epsilon}^+)$ and $U_2 =\Psi(U_{\epsilon}^-)$ are germs of open sets in $\R^2$ bounded by half-branches at infinity of $p(x,y)=0$ associated with $S$. 
The image of $u=0$ by  $\Psi$ is a branch at infinity approaching both $U_1$ and $U_2$. 
Finally if $B$ is odd then the function $f(0,v)=a v^{m-B}+\cdots$ changes sign at $0$ from $+$ to $-$ and if $B$ is even then it is negative for $v\neq0$ close to 0. 
This proves the last statement of the theorem. 
\end{proof}

\begin{remark}\label{localcharts}
It follows from the proof of the theorem that in the homogeneous local charts (the image of) $U_1$ is contained in the upper half plane and (the image of) $U_2$ is contained in the lower half-plane. 
This fact is important when we use the theorem to study the global foliation given by a submersion $p$. 
As an example, see our reasonings to prove the non-existence of Jacobian mates for $p$ when its Newton polygon is as in case (c) of Figure~\ref{deg021} below. 
\end{remark}
	
\begin{corollary}\label{one_three}
Let $p(x,y)$ be a polynomial submersion with a convenient Newton polygon that has an outer edge $S$ with exactly one interior lattice point and let $c_0$ be the point of discontinuity of $N_S$. 
If $N_S(c_0) = 2$ and either (i) $N(c_0) = 3$ or (ii) there exists $\epsilon > 0$ such that the restriction of the function $N$ to $(c_0 - \epsilon, c_0)$ or to $(c_0, c_0 + \epsilon)$ is equal to $1$, then $p$ has no Jacobian mates. 
\end{corollary}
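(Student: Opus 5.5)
Since $N_S(c_0)=2$ and $N_S$ is nonconstant with its unique discontinuity at $c_0$, Theorem~\ref{lattice_points} puts us in the case $b=2$. Whichever parity $B$ has, that theorem gives two germs of open sets $U_1$ and $U_2$ at a single point $z$ at infinity. Each is bounded by two half-branches at infinity of $p(x,y)=c_0$ associated with $S$, and $p(a)\to c_0$ as $\|a\|\to\infty$ within $U_1\cup U_2$. So $U_1$ and $U_2$ are hyperbolic sectors $h_a$ and $h_b$ of $\nabla p^{\perp}$ at $z$, they share no separatrices, and all their separatrices lie in $p^{-1}(c_0)$. The theorem also provides a branch $\gamma$ of an algebraic curve at $z$ with one half-branch in $U_1$ and the other in $U_2$.

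Next I would connect the two sectors. Since $N_S$ is discontinuous at $c_0$ and $N_S(c_0)=2$ is attained only there, $c_0$ is a point of discontinuity of $N=\sum N_{S'}$. I would check this through parity: by Lemma~\ref{bounded}(ii) each $N_{S'}$ is constant mod $2$ off a finite set, while $N_S$ jumps from $0$ or $1$ to $2$. If that is awkward, I would argue directly that $p^{-1}(c_0)$ has extra components compared with nearby fibers, because the $U_i$ are hyperbolic sectors. Under (i) $N(c_0)=3$, and under (ii) $N(c)=1$ on a one-sided interval; in case (ii) we also need $N(c_0)>2$. That follows from Corollary~\ref{corSek}: the jump from $1$ must be compensated, and in fact the four half-branches at $z$ already force at least two components of $p^{-1}(c_0)$, with the bounding curves of $U_1$ and $U_2$ giving at least $3$ once the discontinuity is accounted for. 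Then Lemma~\ref{connects} shows that $h_a$ and $h_b$ connect through a chain $h_1=h_a,\dots,h_k=h_b$ satisfying \ref{primer} and \ref{segon} of Theorem~\ref{hyperbolic}, possibly after swapping labels.

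Finally, I would apply Theorem~\ref{hyperbolic} with $z_1=z_k=z$, using $\gamma$ as the required branch with half-branches in $h_1$ and $h_k$. This gives that $p$ has no Jacobian mates.

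The main obstacle is the hypothesis check for Lemma~\ref{connects} in case (ii), namely $N(c_0)>2$ together with discontinuity of $N$ at $c_0$. The plan is to count components through half-branches. The four half-branches at $z$ bounding $U_1$ and $U_2$ lie on $p^{-1}(c_0)$, and $N(c_0)=2$ combined with $N\equiv1$ on one side would contradict the Euler integral identity $\int N\,d\chi=-1$ of Corollary~\ref{corSek}. A smaller point to verify is $k>1$ in Theorem~\ref{hyperbolic}; it holds because $h_a\neq h_b$.
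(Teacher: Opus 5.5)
Your skeleton is exactly the paper's proof. Theorem~\ref{lattice_points} with $b=2$ gives the two hyperbolic sectors $U_1,U_2$ at one point $z$ with no common separatrix, together with the branch $\gamma$. Lemma~\ref{connects} chains the two sectors, and Theorem~\ref{hyperbolic} concludes. The paper does nothing more; in particular it never checks the side hypotheses of Lemma~\ref{connects} that you single out as the main obstacle. The arguments you give for those hypotheses, however, are wrong as written.

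First, $N_S(c_0)=2$ is not attained only at $c_0$: for $B$ even, $N_S$ is of type $0{\bf 2}2$ (or $2{\bf 2}0$), so it equals $2$ on a half-line. Also, a jump of $N_S$ does not force a jump of $N$, because other $N_{S'}$ may jump at $c_0$ too. Lemma~\ref{bounded}(ii) says nothing about values at the exceptional points, so the parity remark proves nothing.

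The correct observation is simpler. In case (ii), $N(c_0)\ge N_S(c_0)=2>1$, so $N$ jumps at $c_0$. In case (i) nothing is needed, since the $N(c_0)=3$ half of the proof of Lemma~\ref{connects} never uses discontinuity.

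Second, $\int N\,d\chi=-1$ is a global identity. Knowing $N$ only near $c_0$ cannot contradict it, because jumps at other values can compensate. A valid proof that $N(c_0)\ge 3$ is topological:
\begin{enumerate}
\item The two separatrices of $U_1$ cannot lie on one component $C$ of $p^{-1}(c_0)$. Otherwise $C$, closed up at $z$, bounds a region that reaches infinity only through $U_1$. This region contains no point of $p^{-1}(c_0)$, since $p\ne c_0$ on $U_1$.
\item Since $p=c_0$ on $C$ and $p\to c_0$ at infinity in $U_1$, $p$ would have an interior extremum in that region, impossible for a submersion. The same holds for $U_2$.
\item So if $N(c_0)=2$, each of the two components joins a separatrix of $U_1$ to one of $U_2$. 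The region between them again reaches infinity only through $U_1\cup U_2$ and misses $p^{-1}(c_0)$, which gives the same contradiction. For $B$ odd, the opposite signs of $p-c_0$ on $U_1$ and $U_2$ already clash.
\end{enumerate}
Alternatively, the case-(ii) half of the proof of Lemma~\ref{connects} derives inseparability from the connectedness of the nearby fibers and never uses $N(c_0)>2$.

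With these repairs your argument is the paper's proof, with its implicit hypothesis checks made explicit.
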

\begin{proof} 
Let $c_0$ as in Theorem~\ref{lattice_points}. 
The hyperbolic sectors of the assumptions of Lemma~\ref{connects} and contained in $p^{-1}(c_0)$ are provided by Theorem~\ref{lattice_points}. 
Both are in the same point of infinity $z$, and they satisfy, by Lemma~\ref{connects}, the sequence of hyperbolic sectors required by Theorem~\ref{hyperbolic}. 
The branch of an algebraic curve $\gamma$ given by Theorem~\ref{lattice_points} completes the set of assumptions required in Theorem~\ref{hyperbolic}. 
So $p$ has no Jacobian mates. 
\end{proof}

\begin{lemma}\label{new1}
Let $p(x,y)$ be a polynomial submersion with a convenient Newton polygon having an outer edge $S$ with exactly one interior lattice point. 
If the function $N-N_S$ is constant then $N \equiv 1$ or $p$ has no
Jacobian mates.  
\end{lemma}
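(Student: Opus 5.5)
If $N_S$ is constant, then $N$ is constant. By Remark~\ref{rem3} (Corollary~\ref{corSek}) this forces $N\equiv 1$, and we are done. So we assume from now on that $N_S$ is nonconstant. By Theorem~\ref{lattice_points}, $N_S$ then has exactly one point of discontinuity $c_0$, and it is of type $1{\bf b}1$ or $0{\bf b}2$ with $0\le b\le 2$, after possibly replacing $p$ by $-p$ so that the relevant vertex coefficient is positive.

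Write $N-N_S\equiv K$ for a constant $K\ge 0$. The type of $N$ is then $(1+K){\bf (b+K)}(1+K)$ or $K{\bf (b+K)}(2+K)$. Corollary~\ref{corSek} gives $\int N\,d\chi=-1$, and the Euler integral of a function of type $m_1{\bf m_2}m_3$ is $-m_1+m_2-m_3$.

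In the first case this reads $-2-2K+b+K=-1$, so $b=K+1$. Since $K\ge 0$ and $b\le 2$, the possibilities are:
\begin{itemize}
\item $(K,b)=(0,1)$: then $N$ is of type $1{\bf 1}1$, which is not a discontinuity, contradicting that $N_S$ jumps at $c_0$ while $N-N_S$ is constant.
\item $(K,b)=(1,2)$: then $N$ is of type $2{\bf 3}2$, so $N(c_0)=3$ and $N_S(c_0)=2$.
\end{itemize}

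In the second case the equation reads $-2K-2+b+K=-1$, so again $b=K+1$. The possibilities are:
\begin{itemize}
\item $(K,b)=(0,1)$: then $N$ is of type $0{\bf 1}2$, which is impossible since a submersion has nonempty fibers. More precisely, $N\ge 1$ everywhere because $p$ is a polynomial submersion, hence open and unbounded, and each fiber over a value in its image has a component, hence at least one branch at infinity. We still need the image of $p$ to be all of $\R$. If it is not, $N=0$ on an interval, and this must be excluded separately: a nonconstant polynomial in two variables that is a submersion is surjective, or at least its image is an open interval, and the Euler-integral bookkeeping with $N=0$ on an unbounded interval should also lead to a contradiction.
\item $(K,b)=(1,2)$: then $N$ is of type $1{\bf 3}3$, so $N(c_0)=3$ with $N_S(c_0)=2$. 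In addition, $N=1$ on one side.
\end{itemize}

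In every surviving case we have $N_S(c_0)=2$ and $N(c_0)=3$. Corollary~\ref{one_three}(i) then applies and shows that $p$ has no Jacobian mates.

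The main obstacle is excluding the degenerate possibilities, namely the $0{\bf 1}2$ type and any values of $N$ equal to $0$. For this I plan to use the fact that polynomial submersions of $\R^2$ with $N$ vanishing somewhere would have a non-surjective image; that case still needs either a separate argument or to be absorbed into the Euler characteristic computation. The remaining bookkeeping is straightforward arithmetic using Theorem~\ref{lattice_points} and Corollary~\ref{corSek}.
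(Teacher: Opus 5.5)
Your overall route is the paper's. You combine $\int N\,d\chi=-1$ from Corollary~\ref{corSek} with the types given by Theorem~\ref{lattice_points} to force $b=K+1$, discard $1\mathbf{1}1$, and finish the types $2\mathbf{3}2$ and $1\mathbf{3}3$ with Corollary~\ref{one_three}. Those steps are correct. (The paper skips your separate constant case by noting that $\int N_S\,d\chi\le 0$ holds in every case, so $K=1+\int N_S\,d\chi\in\{0,1\}$; this is only cosmetic.)

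The genuine gap is the case $(K,b)=(0,1)$ with $B$ even, i.e.\ $N$ of type $0\mathbf{1}2$ (and its mirror $2\mathbf{1}0$ before your sign normalization). You explicitly leave this case open, and the arguments you sketch do not close it.
\begin{itemize}
\item ``A submersion has nonempty fibers'' and ``$N\ge 1$ everywhere'' both presuppose that $p$ is surjective. You neither prove this nor need it.
\item The Euler-integral bookkeeping cannot produce a contradiction, because $0\cdot(-1)+1\cdot 1+2\cdot(-1)=-1$ is exactly what Corollary~\ref{corSek} requires.
\end{itemize}

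The missing step is to read off the image of $p$ from this type. You even state the needed fact, that the image of a submersion is an open interval, but do not apply it. Since $N(c_0)=1>0$, the value $c_0$ lies in $p(\R^2)$. Since $N=0$ on $(-\infty,c_0)$, no smaller value is attained. Since $N=2$ on $(c_0,\infty)$, all larger values are attained. Hence $p(\R^2)=[c_0,\infty)$, which is not open. This contradicts the fact that a submersion is an open map.

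This is exactly how the paper disposes of $0\mathbf{1}2$ and $2\mathbf{1}0$. With that one line inserted, your proof is complete.
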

\begin{proof} 
Let $A = N - N_S$. 
Then 
$$
-1=\int  N\,d\chi =\int (N-N_s)\,d\chi+\int N_s\,d\chi=-A+\int N_s\,d\chi. 
$$ 
It follows from Theorem~\ref{lattice_points} that $\int N_S\,d\chi \leq 0$. 
Therefore one of the following holds: 
\begin{enumerate}[label={\textnormal{(\roman*)}}]
\item\label{r1} $A=0$ and $\int N_s\,d\chi=-1$, 
\item\label{r2} $A=1$ and $\int N_s\,d\chi=0$. 
\end{enumerate}
According to Theorem~\ref{lattice_points}, if~\ref{r1} holds then $N_S$ is of type $1 {\bf 1} 1$ or $0 {\bf 1} 2$ or $2 {\bf 1} 0$ (here we are not assuming the positiveness of the coefficient of $x^{i_0} y^{j_0}$) and $N$ is of the same type. 
If $N$ is of type $1 {\bf 1} 1$ then it is constant. 
The other cases are impossible because the image of the real plane by $p$, which is an open mapping, would be a closed half interval. 

Now if~\ref{r2} holds then $N_S$ is of type $1 {\bf 2} 1$ or $0 {\bf 2} 2$ or $2 {\bf 2} 0$, that is, ${\bf b} = 2$ in Theorem~\ref{lattice_points}. 
Then $p$ is of type $2{\bf 3} 2$ or $1{\bf 3} 3$ or $3{\bf 3} 1$, respectively. 
Therefore, by Corollary~\ref{one_three}, it follows that $p$ has no Jacobian mates. 
\end{proof}

\begin{lemma}\label{lemma2}
Let $p(x,y)$ be a polynomial submersion whose Newton polygon has an outer edge $S$ with endpoints $(3,0)$ and $(3,3)$. 
Then
\begin{enumerate}[label={\textnormal{(\roman*)}}]
\item\label{i} $N_S$ is constant and bigger than $0$, or
\item\label{sii} $p$ does not have Jacobian mates, or
\item\label{siii} after an affine change of variables of the form $y:=y+a$, the Newton polygon of $p$ turns to one with an edge with endpoints $(3,3)$ and $(3,2)$ and one with endpoints $(3,2)$ and $(1,0)$. 
\end{enumerate}
\end{lemma}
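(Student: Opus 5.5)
The plan is to analyse the face polynomial $p_S$ and split according to the real root structure of its one-variable factor. The outer normal of $S$ is $\xi=(1,0)$, so by~\eqref{restriction}
$$
p_S(x,y)=x^3 g(y),\qquad \deg g=3,\quad g(0)\neq 0,
$$
with $g(y)=\delta\prod_i (y-c_i)^{\nu_i}$ and $\overline N_S=3$. By~\eqref{local-global}, $N_S=\sum \tilde N_i$, the sum running over the real roots $c_i$ of $g$. There are three cases.

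\emph{Case 1: all real roots of $g$ are simple.} Then $p$ is nondegenerate on $S$. By Lemma~\ref{bounded}(iii), $N_S$ is constant and $N_S\equiv 3 \bmod 2$, so $N_S\geq 1$. This is alternative~\ref{i}.

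\emph{Case 2: $g=\delta(y-a)^2(y-b)$ with $a\neq b$ real.} Lemma~\ref{local-bound}(iii) gives $\tilde N_b\equiv 1$, so $N_S=1+\tilde N_a$. Apply the translation $y:=y+a$. The new face polynomial is $\delta x^3y^2(y+a-b)$, so the points $(3,0),(3,1)$ disappear from the support and $S$ shrinks to the segment from $(3,3)$ to $(3,2)$. The edge of the new polygon leaving $(3,2)$ downward-left then governs $\tilde N_a$. Concretely, the local curve at $(a,0)$ in the toric chart of Lemma~\ref{bounded} has the form $u^2+v\tilde p_1$, and the leading part of $\tilde p_1$ is read off from the new edge at $(3,2)$.

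I will run the argument of Theorem~\ref{lattice_points}: Splitting Lemma, then $f=v^{-B}(u^2+h(v))$, here with $B=3$ odd. If $\tilde N_a$ is constant, then $N_S$ is constant and positive, giving~\ref{i}. If $\tilde N_a$ jumps, two sub-cases arise. If the new edge from $(3,2)$ ends at $(1,0)$, we have exactly the configuration of~\ref{siii}. Otherwise, the remaining lattice-point geometry (the $x$-degree of $p$ is $3$, and the polygon is convenient) forces the degenerate part to produce the $1{\bf 2}1$-type jump. In that situation we obtain two hyperbolic sectors at the same point at infinity, together with a branch crossing both. Combining Lemma~\ref{connects}, Corollary~\ref{one_three} and the Euler integral identity of Corollary~\ref{corSek} (as in Lemma~\ref{new1}) then gives~\ref{sii}.

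\emph{Case 3: $g=\delta(y-a)^3$.} After $y:=y+a$ the face becomes $\delta x^3y^3$, and the whole vertical edge collapses to the vertex $(3,3)$. The new outer edges at $(3,3)$ have strictly fewer lattice points. I plan to show that either they are nondegenerate, so that the resulting $N$ is constant and Remark~\ref{rem3} applies (which falls under~\ref{i} via the original count), or that a degenerate edge with one interior lattice point appears. In the latter case Theorem~\ref{lattice_points} together with Lemma~\ref{new1} or Corollary~\ref{one_three} yields~\ref{sii}.

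The main obstacle is the sub-case analysis in Cases 2 and 3. There one must enumerate the possible edges emanating from $(3,2)$ (respectively $(3,3)$) after the shift, and check that every configuration other than the edge to $(1,0)$ is covered by the hypotheses of Corollary~\ref{one_three}, namely $N(c_0)=3$ or $N\equiv 1$ on one side. My first attempt would be to use the parity and bound statements of Lemma~\ref{bounded}, together with $\int N\,d\chi=-1$, to pin the type of $N$ to $2{\bf 3}2$, $1{\bf 3}3$ or $3{\bf 3}1$ whenever $N_S$ jumps.
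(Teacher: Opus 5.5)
Your opening matches the paper. Nondegeneracy on $S$ gives (i) by Lemma~\ref{bounded}(iii) and parity. In the degenerate case you translate $y\mapsto y+a$ at the multiple root and inspect the edges of $NP(\tilde p)$, $\tilde p(x,y)=p(x,y+a)$, below $(3,3)$.

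\textbf{First gap: the jumping configurations are misclassified.} The trouble is with the configurations where $N_S$ jumps but (iii) fails. This happens, for instance, in cases (f) and (k) of Figure~\ref{all}. There, after normalizing so that $p(x,a)\equiv 0$, one has $\tilde p=a_{01}y+y^2(\cdots)$. Then the line $y=a$ is an extra branch associated with $S$, but only at $c=0$: for $c\neq 0$ the point $(0,0)$ re-enters $NP(\tilde p-c)$ and the branch structure changes. So your Case~3 dichotomy is wrong. Take $p=(y-a)+x^3(y-a)^3$ with $a\neq 0$:
\begin{itemize}
\item it is a submersion and satisfies the hypotheses;
\item the new edge $(0,1)$--$(3,3)$ has no interior lattice point;
\item yet $N_S(0)=2$ (the line $y=a$ and the branch $x=-t^{-2}$, $y=a+t^3$), while $N_S(c)=1$ for $c\neq0$.
\end{itemize}
This case therefore cannot go to (i). Remark~\ref{rem3} is beside the point in any case: it is about $N$ for $p$ nondegenerate on all its outer edges, not about $N_S$. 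Likewise, your Case~2 claim that every jump other than (iii) is ``forced'' to be of type $1\mathbf{2}1$ is asserted, not derived from an enumeration of the possible edges.

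\textbf{Second gap: your route to (ii) fails in these cases.}
\begin{itemize}
\item Theorem~\ref{lattice_points} and Corollary~\ref{one_three} need an outer edge with exactly one interior lattice point. Here $S$ has two and the new edges have none. In the triple-root case even the Splitting-Lemma step ($H(u,0)=u^2+\cdots$) is unavailable. Re-running the local argument at a double root does give local sectors, but nothing more.
\item Lemma~\ref{connects}, which Corollary~\ref{one_three} relies on, needs the global hypothesis $N(c_0)=3$ or $N\equiv1$ on one side of $c_0$. The lemma's hypotheses say nothing about this, since the rest of $NP(p)$ is arbitrary.
\item The Euler identity cannot pin $N$ down. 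In case (k), $N_S$ has type $2\mathbf{3}2$, so $N\ge 2$ away from $c_0$. If another outer edge had a $1\mathbf{2}1$ jump at $c_0$, $N$ would have type $3\mathbf{5}3$, still with $\int N\,d\chi=-1$, and neither hypothesis of Lemma~\ref{connects} holds.
\end{itemize}
In Lemma~\ref{new1} this pinning works only because $N-N_S$ is assumed constant.

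The paper closes cases (f) and (k) with an external theorem: the main result of Gwo\'zdziewicz's paper that the authors cite as Gw2. Your proposal has nothing to replace it. The paper also disposes of cases (e) and (j), which have a horizontal edge from $(0,1)$ to $(1,1)$, by exhibiting a critical point of $p$ on the line $y=a$. Your sector argument would not detect this.
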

See in Figure~\ref{3000} an example with the assumptions in (a) and the situation after~\ref{siii} in~(b). 
\begin{figure}[htbp]
\begin{center}
\subfigure[]{
\begin{tikzpicture}
\draw[-] (0.5,0)--(0.5,2.5);
\draw[-] (1,0)--(1,2.5);
\draw[-] (1.5,0)--(1.5,2.5);
\draw[-] (2,0)--(2,2.5);
\draw[-] (2.5,0)--(2.5,2.5);
\draw[-] (0,0.5)--(2.5,0.5);
\draw[-] (0,1)--(2.5,1);
\draw[-] (0,1.5)--(2.5,1.5);
\draw[-] (0,2)--(2.5,2);
\draw[-] (0,2.5)--(2.5,2.5);
\draw[->,thick](0,0)--(0,3);
\draw[->,thick](0,0)--(3,0);
\draw[-,red,thick](1.5,0)--(1.5,1.5);
\node at (1.7,0.75){{$S$}};
\draw[-,green,dashed,thick](1.5,1.5)--(1,2)--(.5,2.5);
\end{tikzpicture}
}
\quad \quad \quad \quad
\subfigure[]{
\begin{tikzpicture}
\draw[-] (0.5,0)--(0.5,2.5);
\draw[-] (1,0)--(1,2.5);
\draw[-] (1.5,0)--(1.5,2.5);
\draw[-] (2,0)--(2,2.5);
\draw[-] (2.5,0)--(2.5,2.5);
\draw[-] (0,0.5)--(2.5,0.5);
\draw[-] (0,1)--(2.5,1);
\draw[-] (0,1.5)--(2.5,1.5);
\draw[-] (0,2)--(2.5,2);
\draw[-] (0,2.5)--(2.5,2.5);
\draw[->,thick](0,0)--(0,3);
\draw[->,thick](0,0)--(3,0);
\draw[-,red,thick](0.5,0)--(1.5,1);
\draw[-,green,thick](1.5,1)--(1.5,1.5);
\draw[-,green,dashed,thick](1.5,1.5)--(1,2)--(0.5,2.5);
\end{tikzpicture}
}
\end{center}
\caption{The Newton polygons of Lemma~\ref{lemma2}.}\label{3000}
\end{figure} 
\begin{proof}
If $p$ is nondegenerate on $S$ then we have Statement~\ref{i} from Lemma~\ref{bounded}. 

From now on we assume $p$ is degenerate on $S$, that is, $p_S(x,y) = \delta x^3 (y - a)^2 (y-b)$ for $a, b\in \R$. 
Then after substituting $y$ by $y+a$, it follows that $NP(p)$ reduces to one of the appearing in Figure~\ref{all} up to adding a suitable constant to $p$. 
\begin{figure}[htbp]
\begin{center}
\newcommand{\tam}{0.8}
\subfigure[\label{aa}]{\scalebox{\tam}{
\begin{tikzpicture}
\draw[-] (0.5,0)--(0.5,2.5);
\draw[-] (1,0)--(1,2.5);
\draw[-] (1.5,0)--(1.5,2.5);
\draw[-] (2,0)--(2,2.5);
\draw[-] (2.5,0)--(2.5,2.5);
\draw[-] (0,0.5)--(2.5,0.5);
\draw[-] (0,1)--(2.5,1);
\draw[-] (0,1.5)--(2.5,1.5);
\draw[-] (0,2)--(2.5,2);
\draw[-] (0,2.5)--(2.5,2.5);
\draw[->,thick](0,0)--(0,3);
\draw[->,thick](0,0)--(3,0);
\draw[-,green,thick](1,0)--(1.5,1.5);
\draw[-,green,dashed,thick](1.5,1.5)--(1,2)--(.5,2.5);
\end{tikzpicture}}}\quad 
\subfigure[\label{ab}]{\scalebox{\tam}{
\begin{tikzpicture}
\draw[-] (0.5,0)--(0.5,2.5);
\draw[-] (1,0)--(1,2.5);
\draw[-] (1.5,0)--(1.5,2.5);
\draw[-] (2,0)--(2,2.5);
\draw[-] (2.5,0)--(2.5,2.5);
\draw[-] (0,0.5)--(2.5,0.5);
\draw[-] (0,1)--(2.5,1);
\draw[-] (0,1.5)--(2.5,1.5);
\draw[-] (0,2)--(2.5,2);
\draw[-] (0,2.5)--(2.5,2.5);
\draw[->,thick](0,0)--(0,3);
\draw[->,thick](0,0)--(3,0);
\draw[-,green,thick](0.5,0)--(1,0.5)--(1.5,1.5);
\draw[-,green,dashed,thick](1.5,1.5)--(1,2)--(.5,2.5);
\end{tikzpicture}}}\quad
\subfigure[\label{ac}]{\scalebox{\tam}{
\begin{tikzpicture}
\draw[-] (0.5,0)--(0.5,2.5);
\draw[-] (1,0)--(1,2.5);
\draw[-] (1.5,0)--(1.5,2.5);
\draw[-] (2,0)--(2,2.5);
\draw[-] (2.5,0)--(2.5,2.5);
\draw[-] (0,0.5)--(2.5,0.5);
\draw[-] (0,1)--(2.5,1);
\draw[-] (0,1.5)--(2.5,1.5);
\draw[-] (0,2)--(2.5,2);
\draw[-] (0,2.5)--(2.5,2.5);
\draw[->,thick](0,0)--(0,3);
\draw[->,thick](0,0)--(3,0);
\draw[-,green,thick](0,0.5)--(1,0.5)--(1.5,1.5);
\draw[-,green,dashed,thick](1.5,1.5)--(1,2)--(.5,2.5);
\end{tikzpicture}}}\quad 
\subfigure[\label{ad}]{\scalebox{\tam}{
\begin{tikzpicture}
\draw[-] (0.5,0)--(0.5,2.5);
\draw[-] (1,0)--(1,2.5);
\draw[-] (1.5,0)--(1.5,2.5);
\draw[-] (2,0)--(2,2.5);
\draw[-] (2.5,0)--(2.5,2.5);
\draw[-] (0,0.5)--(2.5,0.5);
\draw[-] (0,1)--(2.5,1);
\draw[-] (0,1.5)--(2.5,1.5);
\draw[-] (0,2)--(2.5,2);
\draw[-] (0,2.5)--(2.5,2.5);
\draw[->,thick](0,0)--(0,3);
\draw[->,thick](0,0)--(3,0);
\draw[-,green,thick](0.5,0)--(1.5,1.5);
\draw[-,green,dashed,thick](1.5,1.5)--(1,2)--(.5,2.5);
\end{tikzpicture}}}\quad
\subfigure[\label{ae}]{\scalebox{\tam}{
\begin{tikzpicture}
\draw[-] (0.5,0)--(0.5,2.5);
\draw[-] (1,0)--(1,2.5);
\draw[-] (1.5,0)--(1.5,2.5);
\draw[-] (2,0)--(2,2.5);
\draw[-] (2.5,0)--(2.5,2.5);
\draw[-] (0,0.5)--(2.5,0.5);
\draw[-] (0,1)--(2.5,1);
\draw[-] (0,1.5)--(2.5,1.5);
\draw[-] (0,2)--(2.5,2);
\draw[-] (0,2.5)--(2.5,2.5);
\draw[->,thick](0,0)--(0,3);
\draw[->,thick](0,0)--(3,0);
\draw[-,green,thick](0,0.5)--(0.5,0.5)--(1.5,1.5);
\draw[-,green,dashed,thick](1.5,1.5)--(1,2)--(.5,2.5);
\end{tikzpicture}}}\quad
\subfigure[\label{af}]{\scalebox{\tam}{
\begin{tikzpicture}
\draw[-] (0.5,0)--(0.5,2.5);
\draw[-] (1,0)--(1,2.5);
\draw[-] (1.5,0)--(1.5,2.5);
\draw[-] (2,0)--(2,2.5);
\draw[-] (2.5,0)--(2.5,2.5);
\draw[-] (0,0.5)--(2.5,0.5);
\draw[-] (0,1)--(2.5,1);
\draw[-] (0,1.5)--(2.5,1.5);
\draw[-] (0,2)--(2.5,2);
\draw[-] (0,2.5)--(2.5,2.5);
\draw[->,thick](0,0)--(0,3);
\draw[->,thick](0,0)--(3,0);
\draw[-,blue,thick](0,0.5)--(1.5,1.5);
\draw[-,green,dashed,thick](1.5,1.5)--(1,2)--(.5,2.5);
\end{tikzpicture}}}\quad
\subfigure[\label{ag}]{\scalebox{\tam}{
\begin{tikzpicture}
\draw[-] (0.5,0)--(0.5,2.5);
\draw[-] (1,0)--(1,2.5);
\draw[-] (1.5,0)--(1.5,2.5);
\draw[-] (2,0)--(2,2.5);
\draw[-] (2.5,0)--(2.5,2.5);
\draw[-] (0,0.5)--(2.5,0.5);
\draw[-] (0,1)--(2.5,1);
\draw[-] (0,1.5)--(2.5,1.5);
\draw[-] (0,2)--(2.5,2);
\draw[-] (0,2.5)--(2.5,2.5);
\draw[->,thick](0,0)--(0,3);
\draw[->,thick](0,0)--(3,0);
\draw[-,green,thick](1,0)--(1.5,1)--(1.5,1.5);
\draw[-,green,dashed,thick](1.5,1.5)--(1,2)--(.5,2.5);
\end{tikzpicture}}}\quad
\subfigure[\label{ah}]{\scalebox{\tam}{
\begin{tikzpicture}
\draw[-] (0.5,0)--(0.5,2.5);
\draw[-] (1,0)--(1,2.5);
\draw[-] (1.5,0)--(1.5,2.5);
\draw[-] (2,0)--(2,2.5);
\draw[-] (2.5,0)--(2.5,2.5);
\draw[-] (0,0.5)--(2.5,0.5);
\draw[-] (0,1)--(2.5,1);
\draw[-] (0,1.5)--(2.5,1.5);
\draw[-] (0,2)--(2.5,2);
\draw[-] (0,2.5)--(2.5,2.5);
\draw[->,thick](0,0)--(0,3);
\draw[->,thick](0,0)--(3,0);
\draw[-,green,thick](1.5,1)--(1.5,1.5);
\draw[-,red,thick](0.5,0)--(1.5,1);
\draw[-,green,dashed,thick](1.5,1.5)--(1,2)--(.5,2.5);
\end{tikzpicture}}}\quad
\subfigure[\label{ai}]{\scalebox{\tam}{
\begin{tikzpicture}
\draw[-] (0.5,0)--(0.5,2.5);
\draw[-] (1,0)--(1,2.5);
\draw[-] (1.5,0)--(1.5,2.5);
\draw[-] (2,0)--(2,2.5);
\draw[-] (2.5,0)--(2.5,2.5);
\draw[-] (0,0.5)--(2.5,0.5);
\draw[-] (0,1)--(2.5,1);
\draw[-] (0,1.5)--(2.5,1.5);
\draw[-] (0,2)--(2.5,2);
\draw[-] (0,2.5)--(2.5,2.5);
\draw[->,thick](0,0)--(0,3);
\draw[->,thick](0,0)--(3,0);
\draw[-,green,thick](0,0.5)--(1,0.5)--(1.5,1)--(1.5,1.5);
\draw[-,green,dashed,thick](1.5,1.5)--(1,2)--(.5,2.5);
\end{tikzpicture}}}\quad
\subfigure[\label{aj}]{\scalebox{\tam}{
\begin{tikzpicture}
\draw[-] (0.5,0)--(0.5,2.5);
\draw[-] (1,0)--(1,2.5);
\draw[-] (1.5,0)--(1.5,2.5);
\draw[-] (2,0)--(2,2.5);
\draw[-] (2.5,0)--(2.5,2.5);
\draw[-] (0,0.5)--(2.5,0.5);
\draw[-] (0,1)--(2.5,1);
\draw[-] (0,1.5)--(2.5,1.5);
\draw[-] (0,2)--(2.5,2);
\draw[-] (0,2.5)--(2.5,2.5);
\draw[->,thick](0,0)--(0,3);
\draw[->,thick](0,0)--(3,0);
\draw[-,green,thick](0,0.5)--(0.5,0.5)--(1.5,1)--(1.5,1.5);
\draw[-,green,dashed,thick](1.5,1.5)--(1,2)--(.5,2.5);
\end{tikzpicture}}}\quad
\subfigure[\label{ak}]{\scalebox{\tam}{
\begin{tikzpicture}
\draw[-] (0.5,0)--(0.5,2.5);
\draw[-] (1,0)--(1,2.5);
\draw[-] (1.5,0)--(1.5,2.5);
\draw[-] (2,0)--(2,2.5);
\draw[-] (2.5,0)--(2.5,2.5);
\draw[-] (0,0.5)--(2.5,0.5);
\draw[-] (0,1)--(2.5,1);
\draw[-] (0,1.5)--(2.5,1.5);
\draw[-] (0,2)--(2.5,2);
\draw[-] (0,2.5)--(2.5,2.5);
\draw[->,thick](0,0)--(0,3);
\draw[->,thick](0,0)--(3,0);
\draw[-,green,thick](1.5,1)--(1.5,1.5);
\draw[-,blue,thick](0,0.5)--(1.5,1);
\draw[-,green,dashed,thick](1.5,1.5)--(1,2)--(.5,2.5);
\end{tikzpicture}}}
\end{center}
\caption{The proof of Lemma~\ref{lemma2}.}\label{all}
\end{figure}

The cases (a), (b), (c), (d), (g), and (i) will provide Statement~\ref{i} because of the following remark: let $\tilde{p}$ be the polynomial $p$ after the application of the substitution $y \to y + a$, that is, $\tilde p(x,y) = p(x, y+a)$, and let $\gamma(t) = (x(t), y(t))$ be the parametrization of any branch of $\tilde p = c$ at infinity associated with an edge with outer normal vector $\xi = (\xi_1, \xi_2)$ with $\xi_1 \neq 0$, $\xi_2 < 0$. 
Applying the inverse of the substitution has the effect of adding the constant $-a$ to $y(t)$. 
Since $y(t) = B t^{-\xi_2} + \cdots$, it follows that after applying this inverse, we will get a branch of $p = c$ at infinity associated with $S$. 
Now, in all the cases (a), (b), (d) and (g), the edges with outer normal vector $\xi = (\xi_1, \xi_2)$ with $\xi_1 \neq 0$, $\xi_2 < 0$ are such that $\tilde p$ is nondegenerate on it, so we conclude that $N_S$ is constant. 
The cases (c) and (i) have the same reason up to the lower horizontal edge. 
This edge is horizontal for the polynomial $\tilde{p}(x,y) - 0$ and it turns to a nondegenerate edge with $\xi = (\xi_1,\xi_2)$, $\xi_1 \neq 0$, $\xi_2<0$, for $\tilde{p}(x,y) - c$, $c \neq 0$. 
So for all $c \neq 0$ we have a branch at infinity and for $c = 0$ we also have a branch at infinity coming from the factor $y$ of $\tilde{p}(x,y)$ has. 
In the first situation $y(t) = B t^{-\xi_2} + \cdots$ with $B \neq 0$, and in the second one, $y(t) \equiv 0$. 
But after adding $a$ to $y(t)$ we get a branch associated with $S$ in the original $p$. 

Cases (f) and (k) provide Statement~\ref{sii} by the main result of \cite{Gw2}. 
Case (h) gives Statement~\ref{siii}. 

Finally, in cases (e) and (j), we have $p(x,y) = a_{01} y + a_{11} x y + y^2 q(x,y)$ for suitable $q(x,y)$ and $a_{11} \neq 0$. 
Then $\nabla p(-a_{01}/a_{11},0) = (0,0)$, contradiction. 
\end{proof}

\section{The main result and its proof}\label{mr} 
\begin{theorem}\label{main}
Let $p:\R^2\to\R$ be a polynomial submersion function of degree $6$. 
If $B(p) \neq \emptyset$ then $p$ has no Jacobian mates. 
\end{theorem}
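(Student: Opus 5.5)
We argue by contradiction. Suppose $p$ has degree $6$, is a submersion with $B(p)\neq\emptyset$, and admits a Jacobian mate. By Section~\ref{453} we may assume $p_+=(ax+by)^2r(x,y)$. After a linear change of coordinates we take the double factor to be $x^2$, so $p_+=x^2 r$ with $\deg r=4$. By Subsection~\ref{ql}, $p$ has degree at least $3$ in both variables.

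Translating $p$ and applying further triangular changes $y\mapsto y+\lambda x^k$ where needed, we arrange $p$ to be convenient. We then enumerate the possible outer boundaries of $NP(p)$ that are compatible with these constraints. The boundary lies in the triangle $i+j\le 6$, and the top-degree edge is controlled by $x^2r$. The first step is to discard cheaply every configuration in which all outer edges carry at most one interior lattice point, or on which $p$ is nondegenerate.
\begin{itemize}
\item If every outer edge has no interior lattice point, or $p$ is nondegenerate on it, then $N$ is constant by Lemma~\ref{bounded}. Remark~\ref{rem3} then gives $N\equiv 1$, contradicting $B(p)\neq\emptyset$.
\item If exactly one outer edge $S$ has one interior lattice point and the rest are nondegenerate or without interior points, then $N-N_S$ is constant. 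Lemma~\ref{new1} yields $N\equiv 1$ or no Jacobian mates; either way we reach a contradiction.
\end{itemize}

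The remaining polygons are those with an outer edge having at least two interior lattice points, or two degenerate edges each with one interior point. For degree $6$, the double factor $x^2$ strongly limits these. Typical survivors are a vertical edge $x=3$ from $(3,0)$ to $(3,3)$, and edges of lattice length $3$ on the line $i+j=6$ or near it. For the vertical edge from $(3,0)$ to $(3,3)$ we apply Lemma~\ref{lemma2}. Alternative~\ref{i} reduces that edge to a constant contribution and returns us to the previous step. Alternative~\ref{sii} ends the argument. Alternative~\ref{siii} gives a new polygon in which the edge from $(3,2)$ to $(1,0)$ has one interior lattice point and the edge from $(3,3)$ to $(3,2)$ has none, so we again apply Lemma~\ref{new1}.

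Edges along the top degree line are handled by exploiting the multiplicity structure of $p_+$. If the degenerate part comes only from the double root $x^2$, the edge behaves like an edge with one interior point, and Theorem~\ref{lattice_points} with Corollary~\ref{one_three} applies. When two edges $S,S'$ each have one interior point, we use Corollary~\ref{corSek}: $\int N\,d\chi=-1$, together with the types from Theorem~\ref{lattice_points} for both $N_S$ and $N_{S'}$. From these we derive the possible types of $N$. We then either get a contradiction (for example, the image of $p$ would be a half-closed interval), or land in the situation $N(c_0)=3$ or $N=1$ on one side with $N_S(c_0)=2$, where Corollary~\ref{one_three} forbids Jacobian mates.

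The main obstacle is the case analysis for the leftover polygons. This covers edges with two interior lattice points that Lemma~\ref{lemma2} does not handle directly, and cases where both jumps occur at the same value $c_0$ or at different values. For the former, I would first try a substitution $y\mapsto y+\lambda x^k$ killing the double root, in the spirit of Lemma~\ref{lemma2}, to split the edge into edges with at most one interior point. For the latter, I would use Theorem~\ref{lattice_points}, Remark~\ref{localcharts} and Theorem~\ref{hyperbolic} directly. The goal there is to exhibit two hyperbolic sectors at one infinite point, joined through inseparable orbits via Lemma~\ref{connects}, with an algebraic branch entering both.
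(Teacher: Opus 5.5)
\textbf{There is a genuine gap.} You list the right tools, but you never carry out the enumeration. More importantly, the mechanism you propose for closing the leftover cases (Corollary~\ref{one_three}, or Lemma~\ref{connects} plus Theorem~\ref{hyperbolic}) provably does not cover all of them.

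\emph{Normalization.} The paper sends every multiple linear factor of $p_+$ to $y$, $x$, $x+y$ and splits by $\deg h$. A monomial factor such as $x^2$ does not make the top edge degenerate, contrary to what you write. It only shortens that edge, and the degeneracy that matters sits on the edges adjacent to its corner.

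\emph{A case your mechanism cannot reach.} Take $p_+=x^2y^4$ with outer edges $R=[(0,4),(2,4)]$, $S=[(2,4),(3,2)]$, $T=[(3,2),(3,0)]$. Lemma~\ref{new1} lets you assume $N_R,N_T$ nonconstant, and Lemma~\ref{bounded} gives $N_S\equiv1$. Corollary~\ref{corSek} together with Theorem~\ref{lattice_points} then forces $N_R$ of type $0\mathbf{2}2$ and $N_T$ of type $1\mathbf{2}1$. If $c_T\ge c_R$, then $N$ has type $2\mathbf{5}4$ or $2\mathbf{4}4\mathbf{5}4$. In both, the Euler integral is $-1$ and $p$ is onto. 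Moreover $N(c_0)\neq3$ at every jump and $N\neq1$ on every interval. So neither Corollary~\ref{one_three} nor Lemma~\ref{connects} applies, and counting alone does not produce the chain of hyperbolic sectors.

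\emph{What the paper does there.} It reconstructs the phase portrait of $\nabla p^{\perp}$ in the Poincar\'e disc:
\begin{enumerate}
\item Remark~\ref{localcharts} places the two $R$-sectors north/south and the two $T$-sectors east/west.
\item The parity of $B$ ($B=4$ for $R$, $B=3$ for $T$) fixes the sign of $p-c_0$ in each sector.
\item The normal $(2,1)$ of $S$, Corollary~\ref{type}, and the normalization that the $x^3y^2$ coefficient is positive put one half-branch of the $S$-branch above the west sector and one below it.
\item Enumerating the admissible ways to join half-branches of equal level, without isolating a sector, yields in every configuration the chain required by Theorem~\ref{hyperbolic}.
\end{enumerate}
This geometric step is the missing idea in your plan.

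\textbf{Other leftover cases are also not handled.}
\begin{itemize}
\item For $p_+=y^6$ with the single outer edge from $(0,6)$ to $(3,0)$ (two interior lattice points, $p_R=(y^2-c_1x)^2(y^2-c_2x)$), Lemma~\ref{lemma2} does not apply. Your idea of a substitution that ``splits the edge into edges with at most one interior point'' is unjustified. Replacing $y^2-c_1x$ by $x$ gives a polynomial of degree at most $5$ whose Newton polygon you do not control. The paper closes this case by invoking the known result for $\deg p\le5$, which you never use.
\item When two edges each carry one interior point, Theorem~\ref{lattice_points} fixes each type only up to the sign of an endpoint coefficient. The paper needs extra information linking these signs. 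For instance, for $p_+=x^2y^2h$, $N_S\equiv0$ forces $h$ to have no real linear factor, hence $a_{24}a_{42}>0$, which yields the half-interval contradiction.
\item The triangular changes $y\mapsto y+\lambda x^k$ with $k\ge2$ that you use to get convenience can raise the degree. That destroys the constraint $i+j\le6$ on which the whole enumeration rests. Only linear changes and translations should be used at that stage.
\end{itemize}
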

By this result and from the comments in the introduction section as well as in Section~\ref{453}, it follows that 
\begin{corollary}\label{cmain}
Let $F = (p,q): \R^2 \to \R^2$ be a polynomial map satisfying~\eqref{1}. 
If $\deg p < 7$, then $F$ is a global diffeomorphism. 
\end{corollary}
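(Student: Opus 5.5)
We argue by contradiction: let $p$ be a degree $6$ polynomial submersion with $B(p)\neq\emptyset$, so $N$ is nonconstant, and assume $p$ has a Jacobian mate. We will derive a contradiction by a case analysis on the Newton polygon. By Section~\ref{453}, after a linear change of coordinates we may assume $p_+=y^2 r(x,y)$ with $\deg r=4$. By Section~\ref{ql} we may assume $\deg_x p\ge 3$ and $\deg_y p\ge 3$. We also make $NP(p)$ convenient and add a constant so that $(0,0)$ belongs to the support. The multiple factor $y^2$ of $p_+$ means the hypotenuse edge on the line $i+j=6$ has its lattice points starting at $(6,0)$ missing. Hence $NP(p)$ is contained in the triangle with vertices $(0,0),(0,6),(4,2)$ intersected with $\{i+j\le 6,\ j\ge ...\}$; more precisely, the support lies in $i+j\le6$ with the degree-$6$ part supported on $j\ge2$. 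Combining this with $\deg_x p\ge3$, $\deg_y p\ge 3$ leaves a finite list of possible sets of outer edges. We would enumerate these sets as in the pictures of Lemma~\ref{lemma2}.

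For each configuration we sort the outer edges $S$ by their number of interior lattice points.

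\begin{itemize}
\item Edges with no interior lattice points contribute $N_S\equiv1$ by Lemma~\ref{bounded}(iv).
\item Nondegenerate edges contribute constants by Lemma~\ref{bounded}(iii).
\item If exactly one edge $S$ can make $N_S$ nonconstant, and that edge has exactly one interior lattice point, then $N-N_S$ is constant. Lemma~\ref{new1} then gives either $N\equiv1$, contradicting $B(p)\ne\emptyset$, or no Jacobian mates. This should dispose of most polygons, including those whose degenerate edge has the form $x^ry^s(y^{\xi_1}-cx^{\xi_2})^2$.
\item Edges like the vertical edge from $(3,0)$ to $(3,3)$ are handled by Lemma~\ref{lemma2}. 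Either $N_S$ is constant; or there are no mates; or after $y\mapsto y+a$ we land in case (h). In case (h) the edge from $(1,0)$ to $(3,2)$ has exactly one interior lattice point, and we return to the previous argument.
\item Polygons falling under the main result of \cite{Gw2} are excluded directly.
\end{itemize}

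The main obstacle is the configurations in which two outer edges are simultaneously degenerate with nonconstant $N_S$, or in which a single edge has two or more interior lattice points. The typical candidate is the hypotenuse-type edge from $(4,2)$ toward $(0,6)$ with $p_S=y^2(\cdots)$ having a square factor, which is where degree $6$ really matters. Here we would first try to restrict the possible types of $N$ using three tools together:
\begin{itemize}
\item Corollary~\ref{corSek}, namely $\int N\,d\chi=-1$;
\item the bounds $N_S\le\overline N_S$ and the parity constraints of Lemma~\ref{bounded}(ii),(v);
\item the fact that $p(\R^2)$ is open, which excludes types like $0\mathbf{b}2$ summing to a half-line image.
\end{itemize}
The aim is to reduce to types $2\mathbf{3}2$, $1\mathbf{3}3$ or $1\mathbf{2}1$-like situations, where $N(c_0)=3$ or $N\equiv1$ on one side. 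Then Theorem~\ref{lattice_points} supplies two hyperbolic sectors at one infinite point with a transversal algebraic branch, and Corollary~\ref{one_three} (through Lemma~\ref{connects} and Theorem~\ref{hyperbolic}) kills the Jacobian mate.

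In the remaining cases, where an edge has two or more interior points, we would use a coordinate change $y\mapsto y+\lambda x^k$ or $x\mapsto x+a$ adapted to the double root, as in the proof of Lemma~\ref{lemma2}. The goal is to pass to a Newton polygon whose degenerate edges each have one interior lattice point, and then repeat the above.
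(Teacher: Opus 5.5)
There is a genuine gap: the step you flag as the main obstacle cannot be closed with the tools you list. Your toolbox is right for most configurations. It is what the paper uses: Lemma~\ref{new1} when only one edge with one interior lattice point can be nonconstant, Lemma~\ref{lemma2} for the $(3,0)$--$(3,3)$ edge, Corollary~\ref{corSek} plus openness of $p(\R^2)$ to discard polygons, and Corollary~\ref{one_three}. But you never carry out the enumeration, and it is \emph{not} true that those constraints always reduce $N$ to a type with $N(c_0)=3$ or $N\equiv1$ on one side of $c_0$.

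Take $p_+=x^2y^4$ with outer edges $R=[(0,4),(2,4)]$, $S=[(2,4),(3,2)]$, $T=[(3,2),(3,0)]$, and $p$ degenerate on $R$ and $T$; this is case (c) of Figure~\ref{deg021}. Then $N_S\equiv1$. After fixing signs, Theorem~\ref{lattice_points} and Corollary~\ref{corSek} force $N_R$ of type $0\mathbf{2}2$ and $N_T$ of type $1\mathbf{2}1$. If $c_T<c_R$ you get $2\mathbf{3}2\mathbf{4}4$ and Corollary~\ref{one_three} works. If $c_T=c_R$ or $c_T>c_R$, however, $N$ is of type $2\mathbf{5}4$ or $2\mathbf{4}4\mathbf{5}4$. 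Both have Euler integral $-1$, respect the bounds and parities of Lemma~\ref{bounded}, and give $p(\R^2)=\R$. At every jump $N(c_0)\in\{4,5\}$ with $N\in\{2,4\}$ on both sides, so Lemma~\ref{connects} and Corollary~\ref{one_three} cannot apply. Their hypotheses on $N$ are coordinate-independent, so no substitution of the kind you propose helps. Lemma~\ref{new1} fails too: $N$ takes the values $2$ and $5$, while $0\le N_{S'}\le 2$ for any edge $S'$ with one interior lattice point.

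The missing ingredient is the global argument on the Poincar\'e disc that the paper uses for exactly these two types.
\begin{enumerate}
\item Locate the sectors. By Theorem~\ref{lattice_points} and Remark~\ref{localcharts}, the level $c_R$ has two hyperbolic sectors at the north/south point, with $p<c_R$ inside since $B=4$ is even. The level $c_T$ has two at the east/west point, with $p-c_T$ of opposite signs inside since $B=3$ is odd.
\item Place the $S$-branches. Normalize the sign of the $x^3y^2$ coefficient and use the parametrization with $\xi=(2,1)$ from Corollary~\ref{type}. Then for every $c$ the branch of $p=c$ associated with $S$ lies in the west, with one half-branch above and one below the western sector.
\item Close the case. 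List the few ways the half-branches of each level can be joined without isolating a half-branch or a sector and with consistent signs inside the sectors. In each resulting portrait, read off a chain of hyperbolic sectors as in Theorem~\ref{hyperbolic} between two diametrically opposite sectors, into which the algebraic branch from Theorem~\ref{lattice_points} enters.
\end{enumerate}
Without this, or an equivalent argument, your case analysis does not close.

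Smaller points:
\begin{itemize}
\item The statement is for $\deg p<7$, so you must invoke the known results for $\deg p\le5$ and the equivalence of Section~\ref{45601}: connected fibers give injectivity, and injective polynomial maps are onto.
\item Your support region is wrong. Lower-degree terms are unrestricted, so $NP(p)$ lies in the quadrilateral with vertices $(0,0),(5,0),(4,2),(0,6)$, not in that triangle.
\item The factor $y^2$ of $p_+$ only truncates the top edge; it does not make it degenerate.
\item For the two-interior-point edge $[(0,6),(3,0)]$ when $p_+=y^6$, the paper does not aim for one-interior-point edges. It applies the substitution sending $y^2-c_1x$ to $x$, which lowers the degree to at most $5$, and then quotes the degree $\le 5$ result.
\end{itemize}
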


\begin{proof}[Proof of Theorem \ref{main}]
Since $p_+$ is homogeneous of degree $6$, it follows that 
$$
p_+ = q_1^{\alpha_1} q_2^{\alpha_2} q_3^{\alpha_3} h, 
$$ 
where $q_1$, $q_2$, $q_3$ are pairwise coprime linear forms, $\alpha_1, \alpha_2, \alpha_3 \in \{0,2,3,4,5,6\}$, $\alpha_1 \geq \alpha_2 \geq \alpha_3$, and $h$ has no multiple linear factors and is coprime with $q_1 q_2 q_3$. 
Applying a suitable linear change of coordinates we may assume that 
$$
q_1(x,y) = y, \quad q_2(x,y) = x,\quad q_3(x,y) = x+y. 
$$
In particular, $x$ and $y$ are not factors of $h$. 
By considering~\eqref{multiplefactor}, we can assume $\alpha_1 \geq 2$, so that $\deg h \leq 4$. 
Replacing $p(x,y)$ by $p(x - d,y - e) + f$ for suitable constants $d, e, f$, we may further assume that the Newton polygon of $p$ is convenient and does not have outer edges with outer normal vectors $(u,v)$ such that $u<0$ or $v<0$.

Below we analyze the $5$ possible cases $\deg h = 4$, $\deg h = 3$,...,$\deg h = 0$. 
In our analyzis, we will depict several Newton polygons, with outer edges drawing in three different colors: olive will be the color of the edge with normal $(1,1)$, that is, such that $p$ restricted to it is $p_+$, green will be the color of any other edge we know $p$ is nondegenerate on it, and red will be the color of an edge where $p$ can be degenerate. 
In particular, $p$ will always be nondegenerate on the olive edge when $\alpha_3 = 0$, what surely happens when $\deg h > 0$. 

\

\noindent 
\underline{If $\deg h = 4$.}
Then $p_+(x,y) = y^2 h(x,y)$ with $(4,0)$ and $(0,4)$ in the support of $h$ because $h$ is coprime with $x y$. 
It follows by our assumptions that $NP(p)$ is as in Figure~\ref{prima}. 
\begin{figure}[htbp]
\begin{center}
\begin{tikzpicture}
\draw[-] (0.5,0)--(0.5,3);
\draw[-] (1,0)--(1,3);
\draw[-] (1.5,0)--(1.5,3);
\draw[-] (2,0)--(2,3);
\draw[-] (2.5,0)--(2.5,3);
\draw[-] (3,0)--(3,3);
\draw[-] (0,0.5)--(3,0.5);
\draw[-] (0,1)--(3,1);
\draw[-] (0,1.5)--(3,1.5);
\draw[-] (0,2)--(3,2);
\draw[-] (0,2.5)--(3,2.5);
\draw[-] (0,3)--(3,3);
\draw[->,thick](0,0)--(0,3.5);
\draw[->,thick](0,0)--(3.5,0);
\draw[-,olive,thick](2,1)--(0,3);
\draw[-,red,thick](2,1)--(2,0);
\end{tikzpicture}
\end{center}
\caption{Newton polygon when $\deg h = 4$.}\label{prima} 
\end{figure}
Since $p$ is nondegenerate on the olive edge, it follows by Lemma~\ref{new1} that $p$ has no Jacobian mates.

\

\noindent 
\underline{If $\deg h =3$.} 
Then $p_+(x,y) = y^3 h(x,y)$ with $(3,0)$ and $(0,3)$ in the support of $h$ because $h$ is coprime with $x y$. 
It follows that $(5,0)$, $(4,1)$ and $(4,0)$ do not belong to $NP(p)$, otherwise $p$ is nondegenerate and hence it is a locally trivial fibration or is not a submersion, according to Remark~\ref{rem3}. 
So it remains to analyze when $NP(p)$ is as in (a) of Figure~\ref{3456}. 
\begin{figure}[htbp]
\begin{center}
\subfigure[]{
\begin{tikzpicture}
\draw[-] (0.5,0)--(0.5,3);
\draw[-] (1,0)--(1,3);
\draw[-] (1.5,0)--(1.5,3);
\draw[-] (2,0)--(2,3);
\draw[-] (2.5,0)--(2.5,3);
\draw[-] (3,0)--(3,3);
\draw[-] (0,0.5)--(3,0.5);
\draw[-] (0,1)--(3,1);
\draw[-] (0,1.5)--(3,1.5);
\draw[-] (0,2)--(3,2);
\draw[-] (0,2.5)--(3,2.5);
\draw[-] (0,3)--(3,3);
\draw[->,thick](0,0)--(0,3.5);
\draw[->,thick](0,0)--(3.5,0);
\draw[-,olive,thick](1.5,1.5)--(0,3);
\draw[-,red,thick](1.5,1.5)--(1.5,0);
\end{tikzpicture}
}\quad \quad \quad
\subfigure[]{
\begin{tikzpicture}
\draw[-] (0.5,0)--(0.5,3);
\draw[-] (1,0)--(1,3);
\draw[-] (1.5,0)--(1.5,3);
\draw[-] (2,0)--(2,3);
\draw[-] (2.5,0)--(2.5,3);
\draw[-] (3,0)--(3,3);
\draw[-] (0,0.5)--(3,0.5);
\draw[-] (0,1)--(3,1);
\draw[-] (0,1.5)--(3,1.5);
\draw[-] (0,2)--(3,2);
\draw[-] (0,2.5)--(3,2.5);
\draw[-] (0,3)--(3,3);
\draw[->,thick](0,0)--(0,3.5);
\draw[->,thick](0,0)--(3.5,0);
\draw[-,red,thick](0.5,0)--(1.5,1);
\draw[-,olive,thick](1.5,1.5)--(0,3);
\draw[-,green,thick](1.5,1)--(1.5,1.5);
\end{tikzpicture}
}
\end{center}
\caption{Newton polygons when $\deg h = 3$.}\label{3456} 
\end{figure}
Here we use Lemma~\ref{lemma2}: in case~\ref{i}, since $p$ is nondegenerate on the olive edge, which has $4$ lattice points, it follows by Lemma~\ref{bounded} that $N(c)$ is constant bigger than $1$, so that $p$ cannot be a submersion. 
In case~\ref{sii}, $p$ has no Jacobian mates. 
Finally, in case~\ref{siii}, the Newton polygon of $p$ can be reduced to the one as in~(b) of Figure~\ref{3456}. 
Then we apply Lemma~\ref{new1} and get $p$ has no Jacobian mates. 

\

\noindent
\underline{If $\deg h =2$.} 
Then $p_+(x,y) = x^2 y^2h(x,y)$ or $p_+(x,y) = y^4h(x,y)$. 

\smallskip

\noindent 
\emph{Case $p_+(x,y) = x^2 y^2h(x,y)$:} since $(2,0)$ and $(0,2)$ belong to the support of $h$, it follows that up to interchanging $x$ and $y$, the possible \emph{degenerate} $NP(p)$ are the ones of Figure~\ref{455678}. 
\begin{figure}[htbp]
\begin{center}
\subfigure[]{
\begin{tikzpicture}
\draw[-] (0.5,0)--(0.5,3);
\draw[-] (1,0)--(1,3);
\draw[-] (1.5,0)--(1.5,3);
\draw[-] (2,0)--(2,3);
\draw[-] (2.5,0)--(2.5,3);
\draw[-] (3,0)--(3,3);
\draw[-] (0,0.5)--(3,0.5);
\draw[-] (0,1)--(3,1);
\draw[-] (0,1.5)--(3,1.5);
\draw[-] (0,2)--(3,2);
\draw[-] (0,2.5)--(3,2.5);
\draw[-] (0,3)--(3,3);
\draw[->,thick](0,0)--(0,3.5);
\draw[->,thick](0,0)--(3.5,0);
\draw[-,green,thick](0,2.5)--(1,2);
\draw[-,red,thick](2,1)--(2,0);
\draw[-,olive,thick](1,2)--(2,1);
\end{tikzpicture}
} \quad \quad \quad
\subfigure[]{
\begin{tikzpicture}
\draw[-] (0.5,0)--(0.5,3);
\draw[-] (1,0)--(1,3);
\draw[-] (1.5,0)--(1.5,3);
\draw[-] (2,0)--(2,3);
\draw[-] (2.5,0)--(2.5,3);
\draw[-] (3,0)--(3,3);
\draw[-] (0,0.5)--(3,0.5);
\draw[-] (0,1)--(3,1);
\draw[-] (0,1.5)--(3,1.5);
\draw[-] (0,2)--(3,2);
\draw[-] (0,2.5)--(3,2.5);
\draw[-] (0,3)--(3,3);
\draw[->,thick](0,0)--(0,3.5);
\draw[->,thick](0,0)--(3.5,0);
\draw[-,red,thick](0,2)--(1,2);
\draw[-,red,thick](2,1)--(2,0);
\draw[-,olive,thick](1,2)--(2,1);
\node at (.7, 2.2){$R$};
\node at (1.7, 1.7){$S$};
\node at (2.2, 0.74){$T$};
\end{tikzpicture}
}
\end{center}
\caption{When $\deg h = 3$ and $p_+=x^2 y^2h$.}\label{455678} 
\end{figure}
Since $\alpha_3 = 0$, it follows that $p$ is nondegenerate on the olive edge. 
In case~(a), we apply Lemma~\ref{new1} to conclude that $p$ has no Jacobian mates. 
In case~(b), let $R$, $S$ and $T$ denote the outer edges of $NP(p)$ ordered clockwise as in the figure. 
If $N_R$ or $N_T$ is constant then we apply Lemma~\ref{new1} once more to conclude that $p$ has no Jacobian mates. 
If both are nonconstant,  then Theorem~\ref{lattice_points} makes us conclude that $\int N_R\,d\chi\leq 0$ and  $\int N_T\,d\chi\leq 0$. 
Since $N_S$ is constant and equal to $0$ or $2$ by Lemma~\ref{bounded},  it follows from the equality 
$$
\int (N_R+N_S+N_T)\,d\chi=-1, 
$$
given by Corollary~\ref{corSek}, that, up to symmetry, 
$$
\int N_R\,d\chi = 0,\quad \int N_T\,d\chi =-1,\quad N_S \equiv 0.  
$$
The third equality implies that $h$ does not have real linear factors, so the signs of the coefficients $a_{24}$ and $a_{42}$ are the same, that we can assume positive up to multiplying $p$ by $-1$.  
Then by the two first equalities and Theorem~\ref{lattice_points} we obtain that $N_R$ is of type $0 {\bf 2} 2$ and $N_T$ is of type $0 {\bf 1} 2$. 
But this implies that the image of $p$ is a closed half-interval of the form $[c, \infty)$, with $c > -\infty$, contradiction because $p$ is a submersion. 

\smallskip

\noindent
\emph{Case $p_+(x,y) = y^4 h(x,y)$:} it is simple to see that $(5,0)$ and $(4,1)$ cannot belong to $NP(p)$, otherwise $p$ would be nondegenerate. 
So the possible $NP(p)$ are the ones of Figure~\ref{67252}.
\begin{figure}[htbp]
\begin{center}
\subfigure[]{
\begin{tikzpicture}
\draw[-] (0.5,0)--(0.5,3);
\draw[-] (1,0)--(1,3);
\draw[-] (1.5,0)--(1.5,3);
\draw[-] (2,0)--(2,3);
\draw[-] (2.5,0)--(2.5,3);
\draw[-] (3,0)--(3,3);
\draw[-] (0,0.5)--(3,0.5);
\draw[-] (0,1)--(3,1);
\draw[-] (0,1.5)--(3,1.5);
\draw[-] (0,2)--(3,2);
\draw[-] (0,2.5)--(3,2.5);
\draw[-] (0,3)--(3,3);
\draw[->,thick](0,0)--(0,3.5);
\draw[->,thick](0,0)--(3.5,0);
\draw[-,red,thick](1,2)--(2,0);
\draw[-,olive,thick](0,3)--(1,2);
\end{tikzpicture}
} \quad 
\subfigure[]{
\begin{tikzpicture}
\draw[-] (0.5,0)--(0.5,3);
\draw[-] (1,0)--(1,3);
\draw[-] (1.5,0)--(1.5,3);
\draw[-] (2,0)--(2,3);
\draw[-] (2.5,0)--(2.5,3);
\draw[-] (3,0)--(3,3);
\draw[-] (0,0.5)--(3,0.5);
\draw[-] (0,1)--(3,1);
\draw[-] (0,1.5)--(3,1.5);
\draw[-] (0,2)--(3,2);
\draw[-] (0,2.5)--(3,2.5);
\draw[-] (0,3)--(3,3);
\draw[->,thick](0,0)--(0,3.5);
\draw[->,thick](0,0)--(3.5,0);
\draw[-,green,thick](1,2)--(1.5,1);
\draw[-,red,thick](1.5,1)--(1.5,0);
\draw[-,olive,thick](0,3)--(1,2);
\end{tikzpicture}
} \quad 
\subfigure[]{
\begin{tikzpicture}
\draw[-] (0.5,0)--(0.5,3);
\draw[-] (1,0)--(1,3);
\draw[-] (1.5,0)--(1.5,3);
\draw[-] (2,0)--(2,3);
\draw[-] (2.5,0)--(2.5,3);
\draw[-] (3,0)--(3,3);
\draw[-] (0,0.5)--(3,0.5);
\draw[-] (0,1)--(3,1);
\draw[-] (0,1.5)--(3,1.5);
\draw[-] (0,2)--(3,2);
\draw[-] (0,2.5)--(3,2.5);
\draw[-] (0,3)--(3,3);
\draw[->,thick](0,0)--(0,3.5);
\draw[->,thick](0,0)--(3.5,0);
\draw[-,red,thick](1,2)--(1,0);
\draw[-,olive,thick](0,3)--(1,2);
\end{tikzpicture}
}
\end{center}
\caption{When $\deg h = 3$ and $p_+ = y^4 h$.}\label{67252} 
\end{figure}
Since $p$ is nondegenerate on the olive edge, it follows by Lemma~\ref{new1} that $p$ has no Jacobian mates in cases (a) and (b). 
In case (c), $p$ is quadratic-like, so it has no Jacobian mates as well, according to Section~\ref{ql}. 

\

\noindent
\underline{If $\deg h =1$.} 
Then $p_+(x,y) = y^5h(x,y)$ or $p_+(x,y) = x^2y^3h(x,y)$.

\smallskip

\noindent
\emph{Case $p_+(x,y) = y^5 h(x,y)$:} it follows that $(5,0)$, $(4,1)$ and $(4,0)$ are not in $NP(p)$, otherwise $p$ is nondegenerate. 
Then the possible $NP(p)$ with $p$ having degree greater than $2$ in $x$ are the ones in Figure~\ref{figdeg1}. 
\begin{figure}[htbp]
\begin{center}
\subfigure[]{
\begin{tikzpicture}
\draw[-] (0.5,0)--(0.5,3);
\draw[-] (1,0)--(1,3);
\draw[-] (1.5,0)--(1.5,3);
\draw[-] (2,0)--(2,3);
\draw[-] (2.5,0)--(2.5,3);
\draw[-] (3,0)--(3,3);
\draw[-] (0,0.5)--(3,0.5);
\draw[-] (0,1)--(3,1);
\draw[-] (0,1.5)--(3,1.5);
\draw[-] (0,2)--(3,2);
\draw[-] (0,2.5)--(3,2.5);
\draw[-] (0,3)--(3,3);
\draw[->,thick](0,0)--(0,3.5);
\draw[->,thick](0,0)--(3.5,0);
\draw[-,red,thick](1.5,0)--(1.5,1);
\draw[-,green,thick](1.5,1)--(0.5,2.5);
\draw[-,olive,thick](0.5,2.5)--(0,3);
\end{tikzpicture}
}\quad \quad \quad
\subfigure[]{
\begin{tikzpicture}
\draw[-] (0.5,0)--(0.5,3);
\draw[-] (1,0)--(1,3);
\draw[-] (1.5,0)--(1.5,3);
\draw[-] (2,0)--(2,3);
\draw[-] (2.5,0)--(2.5,3);
\draw[-] (3,0)--(3,3);
\draw[-] (0,0.5)--(3,0.5);
\draw[-] (0,1)--(3,1);
\draw[-] (0,1.5)--(3,1.5);
\draw[-] (0,2)--(3,2);
\draw[-] (0,2.5)--(3,2.5);
\draw[-] (0,3)--(3,3);
\draw[->,thick](0,0)--(0,3.5);
\draw[->,thick](0,0)--(3.5,0);
\draw[-,red,thick](0.5,2.5)--(1.5,0.5);
\draw[-,green,thick](1.5,0.5)--(1.5,0);
\draw[-,olive,thick](0.5,2.5)--(0,3);
\end{tikzpicture}
}
\end{center}
\caption{When $\deg h = 1$ and $p_+(x,y) = y^5 h(x,y)$.}\label{figdeg1}
\end{figure}
In both cases, Lemma~\ref{new1} applies to conclude that $p$ has no Jacobian mates. 

\smallskip 

\noindent
\emph{Case $p_+(x,y) = x^2y^3 h(x,y)$:} it can happen that $(0,5)$ belongs or not to $NP(p)$. 
In the first case it follows that $(5,0), (4,1), (4,0) \notin NP(p)$, otherwise $p$ is nondegenerate. 
Then we get that $NP(p)$ is as in~(a) of Figure~\ref{sedeg1}. 
\begin{figure}[htbp]
\begin{center}
\subfigure[]{
\begin{tikzpicture}
\draw[-] (0.5,0)--(0.5,3);
\draw[-] (1,0)--(1,3);
\draw[-] (1.5,0)--(1.5,3);
\draw[-] (2,0)--(2,3);
\draw[-] (2.5,0)--(2.5,3);
\draw[-] (3,0)--(3,3);
\draw[-] (0,0.5)--(3,0.5);
\draw[-] (0,1)--(3,1);
\draw[-] (0,1.5)--(3,1.5);
\draw[-] (0,2)--(3,2);
\draw[-] (0,2.5)--(3,2.5);
\draw[-] (0,3)--(3,3);
\draw[->,thick](0,0)--(0,3.5);
\draw[->,thick](0,0)--(3.5,0);
\draw[-,green,thick](0,2.5)--(1,2);
\draw[-,olive,thick](1.5,1.5)--(1,2);
\draw[-,red,thick](1.5,1.5)--(1.5,0);
\end{tikzpicture}
}
\subfigure[]{
\begin{tikzpicture}
\draw[-] (0.5,0)--(0.5,3);
\draw[-] (1,0)--(1,3);
\draw[-] (1.5,0)--(1.5,3);
\draw[-] (2,0)--(2,3);
\draw[-] (2.5,0)--(2.5,3);
\draw[-] (3,0)--(3,3);
\draw[-] (0,0.5)--(3,0.5);
\draw[-] (0,1)--(3,1);
\draw[-] (0,1.5)--(3,1.5);
\draw[-] (0,2)--(3,2);
\draw[-] (0,2.5)--(3,2.5);
\draw[-] (0,3)--(3,3);
\draw[->,thick](0,0)--(0,3.5);
\draw[->,thick](0,0)--(3.5,0);
\draw[-,green,thick,dashed](1.5,1.5)--(2.5,0);
\draw[-,green,thick,dashed](1.5,1.5)--(2,0);
\draw[-,green,thick,dashed](1.5,1.5)--(2,0.5);
\draw[-,green,thick,dashed](2,0.5)--(2,0);
\draw[-,olive,thick](1.5,1.5)--(1,2);
\draw[-,red,thick](0,2)--(1,2);
\end{tikzpicture}
}
\subfigure[]{
\begin{tikzpicture}
\draw[-] (0.5,0)--(0.5,3);
\draw[-] (1,0)--(1,3);
\draw[-] (1.5,0)--(1.5,3);
\draw[-] (2,0)--(2,3);
\draw[-] (2.5,0)--(2.5,3);
\draw[-] (3,0)--(3,3);
\draw[-] (0,0.5)--(3,0.5);
\draw[-] (0,1)--(3,1);
\draw[-] (0,1.5)--(3,1.5);
\draw[-] (0,2)--(3,2);
\draw[-] (0,2.5)--(3,2.5);
\draw[-] (0,3)--(3,3);
\draw[->,thick](0,0)--(0,3.5);
\draw[->,thick](0,0)--(3.5,0);
\draw[-,red,thick](1.5,1.5)--(1.5,0);
\draw[-,olive,thick](1.5,1.5)--(1,2);
\draw[-,red,thick](0,2)--(1,2);
\node at (0.7, 2.19){$R$};
\node at (1.39, 1.86){$S$};
\node at (1.68, .74){$T$};
\end{tikzpicture}
}
\end{center}
\caption{When $\deg h = 1$ and $p_+ = x^2 y^3 h$.}\label{sedeg1}
\end{figure}
In the second case, that is $(0,5) \notin NP(p)$, it follows that the possible degenerate Newton polygons of $p$ are the three ones represented in~(b) or the one in~(c) of Figure~\ref{sedeg1}. 
In case~(a) of the figure, it follows by Lemma~\ref{lemma2} that either $p$ has no Jacobian mates or $NP(p)$ reduces to the one in~(a) of Figure~\ref{after3}. 
\begin{figure}[htbp]
\begin{center}
\subfigure[]{
\begin{tikzpicture}
\draw[-] (0.5,0)--(0.5,3);
\draw[-] (1,0)--(1,3);
\draw[-] (1.5,0)--(1.5,3);
\draw[-] (2,0)--(2,3);
\draw[-] (2.5,0)--(2.5,3);
\draw[-] (3,0)--(3,3);
\draw[-] (0,0.5)--(3,0.5);
\draw[-] (0,1)--(3,1);
\draw[-] (0,1.5)--(3,1.5);
\draw[-] (0,2)--(3,2);
\draw[-] (0,2.5)--(3,2.5);
\draw[-] (0,3)--(3,3);
\draw[->,thick](0,0)--(0,3.5);
\draw[->,thick](0,0)--(3.5,0);
\draw[-,green,thick](0,2.5)--(1,2);
\draw[-,green,thick](1.5,1.5)--(1.5,1);
\draw[-,olive,thick](1.5,1.5)--(1,2);
\draw[-,red,thick](1.5,1)--(0.5,0);
\end{tikzpicture}
}
\quad \quad \quad
\subfigure[]{
\begin{tikzpicture}
\draw[-] (0.5,0)--(0.5,3);
\draw[-] (1,0)--(1,3);
\draw[-] (1.5,0)--(1.5,3);
\draw[-] (2,0)--(2,3);
\draw[-] (2.5,0)--(2.5,3);
\draw[-] (3,0)--(3,3);
\draw[-] (0,0.5)--(3,0.5);
\draw[-] (0,1)--(3,1);
\draw[-] (0,1.5)--(3,1.5);
\draw[-] (0,2)--(3,2);
\draw[-] (0,2.5)--(3,2.5);
\draw[-] (0,3)--(3,3);
\draw[->,thick](0,0)--(0,3.5);
\draw[->,thick](0,0)--(3.5,0);
\draw[-,green,thick](1.5,1.5)--(1.5,1);
\draw[-,red,thick](1.5,1)--(0.5,0);
\draw[-,olive,thick](1.5,1.5)--(1,2);
\draw[-,red,thick](0,2)--(1,2);
\node at (0.7, 2.19){$R$};
\node at (1.39, 1.86){$S$};
\node at (1.68, 1.23){$T$};
\node at (1.19, .3){$U$};
\end{tikzpicture}
}
\end{center}
\caption{Reductions.}\label{after3}
\end{figure}
Then Lemma~\ref{new1} applies to show that $p$ has no Jacobian mates as well. 
In all the cases~(b) of Figure~\ref{sedeg1}, Lemma~\ref{new1} applies as well to conclude that $p$ has no Jacobian mates. 

We now analyze case~(c) of Figure~\ref{sedeg1}. 
If $N_T$ is constant, then Lemma~\ref{new1} allows us to conclude that $p$ has no Jacobian mates. 
If $N_T$ is not constant, then Lemma~\ref{lemma2} guarantees that $p$ has no Jacobian mates as well or $NP(p)$ reduces to the one in~(b) of Figure~\ref{after3}. 
Then we apply Lemma~\ref{bounded} and Theorem~\ref{lattice_points} to conclude that $\int \left(N_S + N_T\right)d\chi = -2$ and that $\int \left(N_R + N_U\right) d \chi \leq 0$, what contradicts Corollary~\ref{corSek}. 

\

\noindent 
\underline{If $\deg h =0$.} 
Then, after multyplying by a constant, $p_+(x,y)$,  is $y^6$, $x^2y^4$, $x^3y^3$ or $x^2y^2(x+y)^2$.

\smallskip

\noindent
\emph{Case $p_+(x,y) = y^6$:}
It follows that $(5,0), (4,1) \notin NP(p)$, otherwise $p$ is nondegenerate. 
Then, if $(3,2) \in NP(p)$, it follows further that $(4,0) \notin NP(p)$, otherwise $p$ is nondegenerate. 
So $NP(p)$ is as in~(a) of Figure~\ref{deg01}. 
\begin{figure}[htbp]
\begin{center}
\subfigure[]{
\begin{tikzpicture}
\draw[-] (0.5,0)--(0.5,3);
\draw[-] (1,0)--(1,3);
\draw[-] (1.5,0)--(1.5,3);
\draw[-] (2,0)--(2,3);
\draw[-] (2.5,0)--(2.5,3);
\draw[-] (3,0)--(3,3);
\draw[-] (0,0.5)--(3,0.5);
\draw[-] (0,1)--(3,1);
\draw[-] (0,1.5)--(3,1.5);
\draw[-] (0,2)--(3,2);
\draw[-] (0,2.5)--(3,2.5);
\draw[-] (0,3)--(3,3);
\draw[->,thick](0,0)--(0,3.5);
\draw[->,thick](0,0)--(3.5,0);
\draw[-,green,thick](0,3)--(1.5,1);
\draw[-,red,thick](1.5,1)--(1.5,0);
\end{tikzpicture}
}
\subfigure[]{
\begin{tikzpicture}
\draw[-] (0.5,0)--(0.5,3);
\draw[-] (1,0)--(1,3);
\draw[-] (1.5,0)--(1.5,3);
\draw[-] (2,0)--(2,3);
\draw[-] (2.5,0)--(2.5,3);
\draw[-] (3,0)--(3,3);
\draw[-] (0,0.5)--(3,0.5);
\draw[-] (0,1)--(3,1);
\draw[-] (0,1.5)--(3,1.5);
\draw[-] (0,2)--(3,2);
\draw[-] (0,2.5)--(3,2.5);
\draw[-] (0,3)--(3,3);
\draw[->,thick](0,0)--(0,3.5);
\draw[->,thick](0,0)--(3.5,0);
\draw[-,red,thick](0,3)--(2,0);
\end{tikzpicture}
} 
\subfigure[]{
\begin{tikzpicture}
\draw[-] (0.5,0)--(0.5,3);
\draw[-] (1,0)--(1,3);
\draw[-] (1.5,0)--(1.5,3);
\draw[-] (2,0)--(2,3);
\draw[-] (2.5,0)--(2.5,3);
\draw[-] (3,0)--(3,3);
\draw[-] (0,0.5)--(3,0.5);
\draw[-] (0,1)--(3,1);
\draw[-] (0,1.5)--(3,1.5);
\draw[-] (0,2)--(3,2);
\draw[-] (0,2.5)--(3,2.5);
\draw[-] (0,3)--(3,3);
\draw[->,thick](0,0)--(0,3.5);
\draw[->,thick](0,0)--(3.5,0);
\draw[-,red,thick](0,3)--(1.5,0);
\node at (1.19, 1.2){$R$};
\end{tikzpicture}
}
\end{center}
\caption{When $\deg h = 0$ and $p_+ = y^6$.}\label{deg01}
\end{figure}
If, on the other hand, $(3,2) \notin NP(p)$, it follows that $p$ is nondegenerate, or  $NP(p)$ is as in~(b) or~(c) of Figure~\ref{deg01}, or $p$ is quadratic-like. 
In cases~(a) and~(b), Lemma~\ref{new1} applies to conclude that $p$ has no Jacobian mates. 
In case (c), we can assume that $p$ is nondegenerate, so $p_R(x,y) = (y^2 - c_1 x)^2 (y^2 - c_2 x)$, with suitable $c_1, c_2 \in \R$, $c_1 \neq 0$. 
Then the automorphism that replaces $y^2 - c_1 x$ by $x$ reduces the degree of $p$. 
And since the real Jacobian conjecture holds when the first coordinate of the map has degree less than $6$ by the main result of \cite{BFOO} cited above, it follows that $p$ has no Jacobian mates. 
	
\smallskip
	
\noindent
\emph{Case $p_+(x,y) = x^2 y^4$:}
First we suppose that $(0,5) \in NP(p)$. 
Then $(5,0), (4,1) \notin NP(p)$, otherwise $p$ is nondegenerate. 
Therefore, since $p$ is degenerate and not quadratic-like, it follows that $NP(p)$ is one of the two appearing in Figure~\ref{deg02}. 
\begin{figure}[htbp]
\begin{center}
\subfigure[]{
\begin{tikzpicture}
\draw[-] (0.5,0)--(0.5,3);
\draw[-] (1,0)--(1,3);
\draw[-] (1.5,0)--(1.5,3);
\draw[-] (2,0)--(2,3);
\draw[-] (2.5,0)--(2.5,3);
\draw[-] (3,0)--(3,3);
\draw[-] (0,0.5)--(3,0.5);
\draw[-] (0,1)--(3,1);
\draw[-] (0,1.5)--(3,1.5);
\draw[-] (0,2)--(3,2);
\draw[-] (0,2.5)--(3,2.5);
\draw[-] (0,3)--(3,3);
\draw[->,thick](0,0)--(0,3.5);
\draw[->,thick](0,0)--(3.5,0);
\draw[-,green,thick](0,2.5)--(1,2);
\draw[-,red,thick](1,2)--(2,0);
\end{tikzpicture}
} \quad \quad \quad
\subfigure[]{
\begin{tikzpicture}
\draw[-] (0.5,0)--(0.5,3);
\draw[-] (1,0)--(1,3);
\draw[-] (1.5,0)--(1.5,3);
\draw[-] (2,0)--(2,3);
\draw[-] (2.5,0)--(2.5,3);
\draw[-] (3,0)--(3,3);
\draw[-] (0,0.5)--(3,0.5);
\draw[-] (0,1)--(3,1);
\draw[-] (0,1.5)--(3,1.5);
\draw[-] (0,2)--(3,2);
\draw[-] (0,2.5)--(3,2.5);
\draw[-] (0,3)--(3,3);
\draw[->,thick](0,0)--(0,3.5);
\draw[->,thick](0,0)--(3.5,0);
\draw[-,green,thick](0,2.5)--(1,2);
\draw[-,green,thick](1,2)--(1.5,1);
\draw[-,red,thick](1.5,1)--(1.5,0);
\end{tikzpicture}
}
\end{center}
\caption{When $\deg h = 0$, $p_+ = x^2 y^4$ and $(0,5) \in NP(p)$.}\label{deg02}
\end{figure}
Both cases have no Jacobian mates from Lemma~\ref{new1}.

Now we assume that $(0,5) \notin NP(p)$. 
Then the possible $NP(p)$ with $p$ not being quadratic-like are the ones of Figure~\ref{deg021}, 
\begin{figure}[htbp]
\begin{center}
\subfigure[]{
\begin{tikzpicture}
\draw[-] (0.5,0)--(0.5,3);
\draw[-] (1,0)--(1,3);
\draw[-] (1.5,0)--(1.5,3);
\draw[-] (2,0)--(2,3);
\draw[-] (2.5,0)--(2.5,3);
\draw[-] (3,0)--(3,3);
\draw[-] (0,0.5)--(3,0.5);
\draw[-] (0,1)--(3,1);
\draw[-] (0,1.5)--(3,1.5);
\draw[-] (0,2)--(3,2);
\draw[-] (0,2.5)--(3,2.5);
\draw[-] (0,3)--(3,3);
\draw[->,thick](0,0)--(0,3.5);
\draw[->,thick](0,0)--(3.5,0);
\draw[-,green,thick,dashed](1,2)--(2.5,0);
\draw[-,green,thick,dashed](1,2)--(2,0.5)--(2,0);
\draw[-,red,thick](0,2)--(1,2);
\end{tikzpicture}
}
\subfigure[]{
\begin{tikzpicture}
\draw[-] (0.5,0)--(0.5,3);
\draw[-] (1,0)--(1,3);
\draw[-] (1.5,0)--(1.5,3);
\draw[-] (2,0)--(2,3);
\draw[-] (2.5,0)--(2.5,3);
\draw[-] (3,0)--(3,3);
\draw[-] (0,0.5)--(3,0.5);
\draw[-] (0,1)--(3,1);
\draw[-] (0,1.5)--(3,1.5);
\draw[-] (0,2)--(3,2);
\draw[-] (0,2.5)--(3,2.5);
\draw[-] (0,3)--(3,3);
\draw[->,thick](0,0)--(0,3.5);
\draw[->,thick](0,0)--(3.5,0);
\draw[-,red,thick](0,2)--(1,2)--(2,0);
\node at (.74, 2.21){$R$};
\node at (1.67, 1.235){$S$};
\end{tikzpicture}
}
\subfigure[]{
\begin{tikzpicture}
\draw[-] (0.5,0)--(0.5,3);
\draw[-] (1,0)--(1,3);
\draw[-] (1.5,0)--(1.5,3);
\draw[-] (2,0)--(2,3);
\draw[-] (2.5,0)--(2.5,3);
\draw[-] (3,0)--(3,3);
\draw[-] (0,0.5)--(3,0.5);
\draw[-] (0,1)--(3,1);
\draw[-] (0,1.5)--(3,1.5);
\draw[-] (0,2)--(3,2);
\draw[-] (0,2.5)--(3,2.5);
\draw[-] (0,3)--(3,3);
\draw[->,thick](0,0)--(0,3.5);
\draw[->,thick](0,0)--(3.5,0);
\draw[-,red,thick](0,2)--(1,2);
\draw[-,red,thick](1.5,1)--(1.5,0);
\draw[-,green,thick](1,2)--(1.5,1);
\node at (.74, 2.21){$R$};
\node at (1.35, 1.75){$S$};
\node at (1.7, 0.74){$T$};
\end{tikzpicture}
}
\end{center}
\caption{When $\deg h = 0$, $p_+ = x^2 y^4$ and $(0,5) \not \in NP(p)$.}\label{deg021}
\end{figure}
where in (a) there are two possibilities drawn in the figure, both of them have no Jacobian mates by Lemma~\ref{new1}. 

As for the case~(b), we can assume that both $N_R$ and $N_S$ are not constant, otherwise we apply Lemma~\ref{new1} to conclude that $p$ has no Jacobian mates. 
Therefore, it follows from Theorem~\ref{lattice_points} that $N_R$ is of type $0{\bf b_1}2$ and $N_S$ is of type $0{\bf b_2}2$. 
By applying Corollary~\ref{corSek}, it follows that $b_1 + b_2 > 0$ so that the image of $p$ is a closed half-interval, a contradiction. 

Now we deal with the case~(c). 
From Lemma~\ref{new1}, we can assume that $N_R$ and $N_T$ are not constant. 
Also, Lemma~\ref{bounded} assures that $N_S \equiv 1$. 
Then it follows by Corollary~\ref{corSek}, after a glance in Theorem~\ref{lattice_points}, that $\int N_R d \chi = \int N_T d \chi = 0$,  so that $N_R$ is of type $0{\bf 2}2$ and $N_T$ is of type $1{\bf 2}1$. 
Observe that up to changing $x$ by $-x$ we can assume that the coefficient of $x^3 y^2$ is positive. 
This will in particular imply that for every $c\in \R$, the branch at infinity of $p^{-1}(c)$ associated with $S$ is contained in the plane $x < M$ for an $M\in \R$ big enough, according to~\eqref{branch} and Remark~\ref{pAB}. 
Let $c_R$ and $c_T$ be the points of discontinuity of the functions $N_R$ and $N_T$, respectively. 
We have three possibilities: 
\begin{enumerate}[label={\textnormal{(\roman*)}}]
\item\label{a1} $c_T < c_R$, 
\item\label{a3} $c_T = c_R$, 
\item\label{a2} $c_T > c_R$. 
\end{enumerate}
In possibility~\ref{a1}, it follows that $p$ is of type $2{\bf 3} 2 {\bf 4} 4$, then Corollary~\ref{one_three} applies to conclude that $p$ has no Jacobian mates. 
Now in possibilities~\ref{a3} and~\ref{a2}, we have that $p$ is of type $2{\bf 5} 4$ and $2{\bf 4} 4 {\bf 5} 4$, respectively. 
In order to conclude that $p$ has Jacobian mates, we will analyze the foliations in the Poincar\'e disc. 

We begin with possibility~\ref{a3}. 
We will assume in the analysis that $c_R = c_T = 0$. 
We shall apply Theorem~\ref{lattice_points} for both edges $R$ and $S$. 
First we consider the edge $R$. 
In this case $B = 4$. 
Then, by the theorem, see also Remark~\ref{localcharts}, there are four half branches of $p^{-1}(0)$ at infinity associated with the edge $R$ bounding two hyperbolic sectors, one in the north pole and one in the south pole of the Poincar\'e disc such that, inside them, the function assumes negative values. 
See~(a) of Figure~\ref{figurinha2}. 
\newcommand{\spa}{\hspace{-0.03cm}}
\begin{figure}[htbp]
\begin{center}
\psfrag{s1e}{\hspace{.07cm}\footnotesize $\ell$}
\psfrag{s2e}{\hspace{.1cm}\footnotesize $j$}
\psfrag{s2w}{\hspace{.15cm}\footnotesize $k$}
\subfigure[]{\includegraphics[scale=.25]{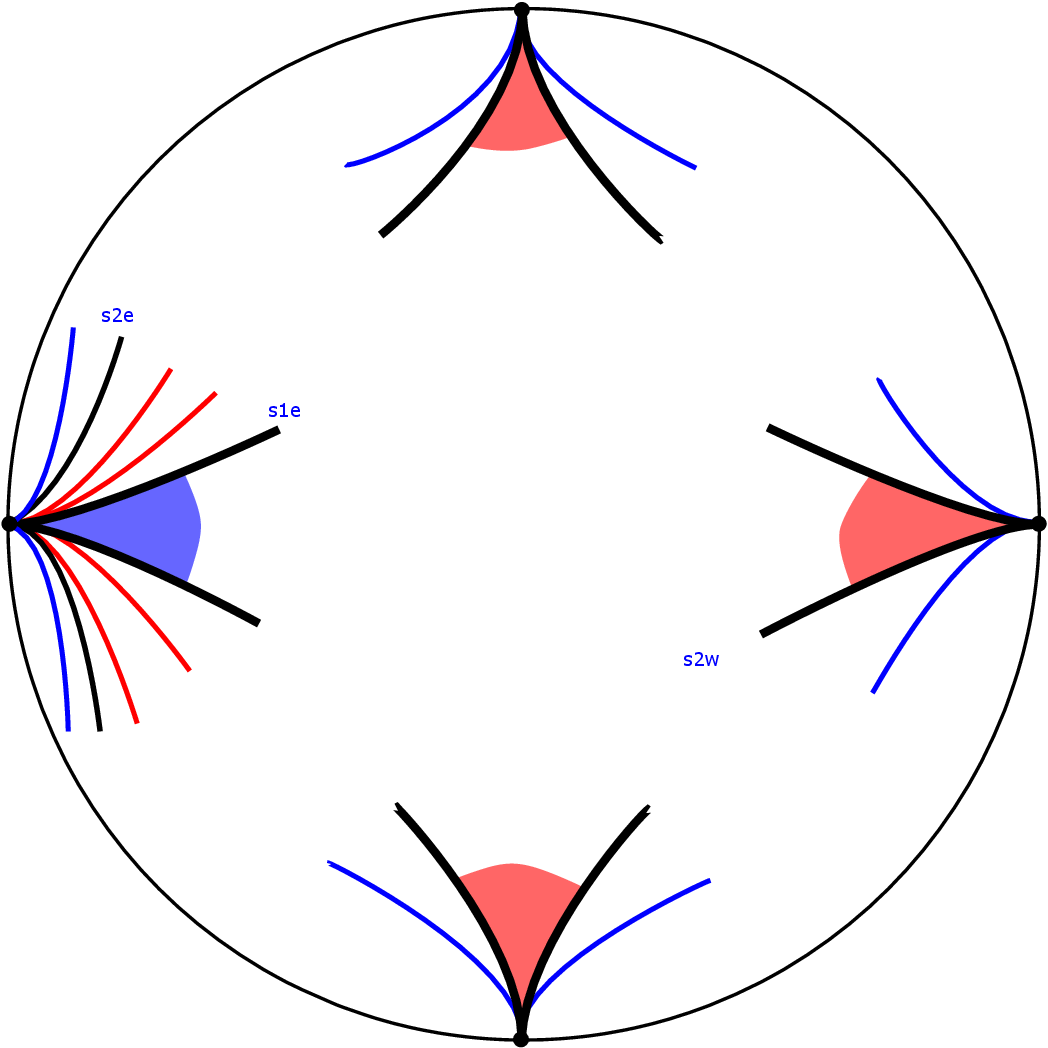}}\quad \quad \quad
\subfigure[]{\includegraphics[scale=.25]{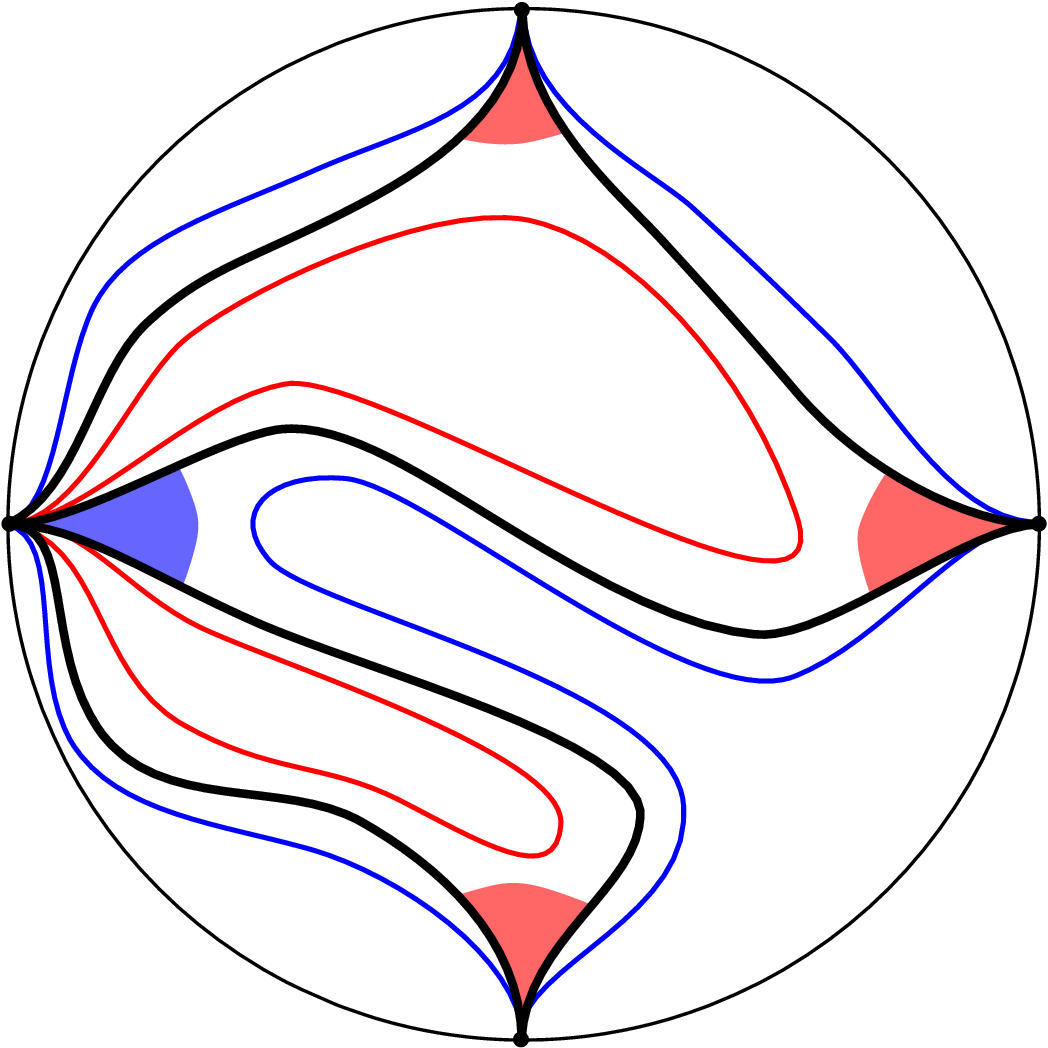}}
\subfigure[]{\includegraphics[scale=.25]{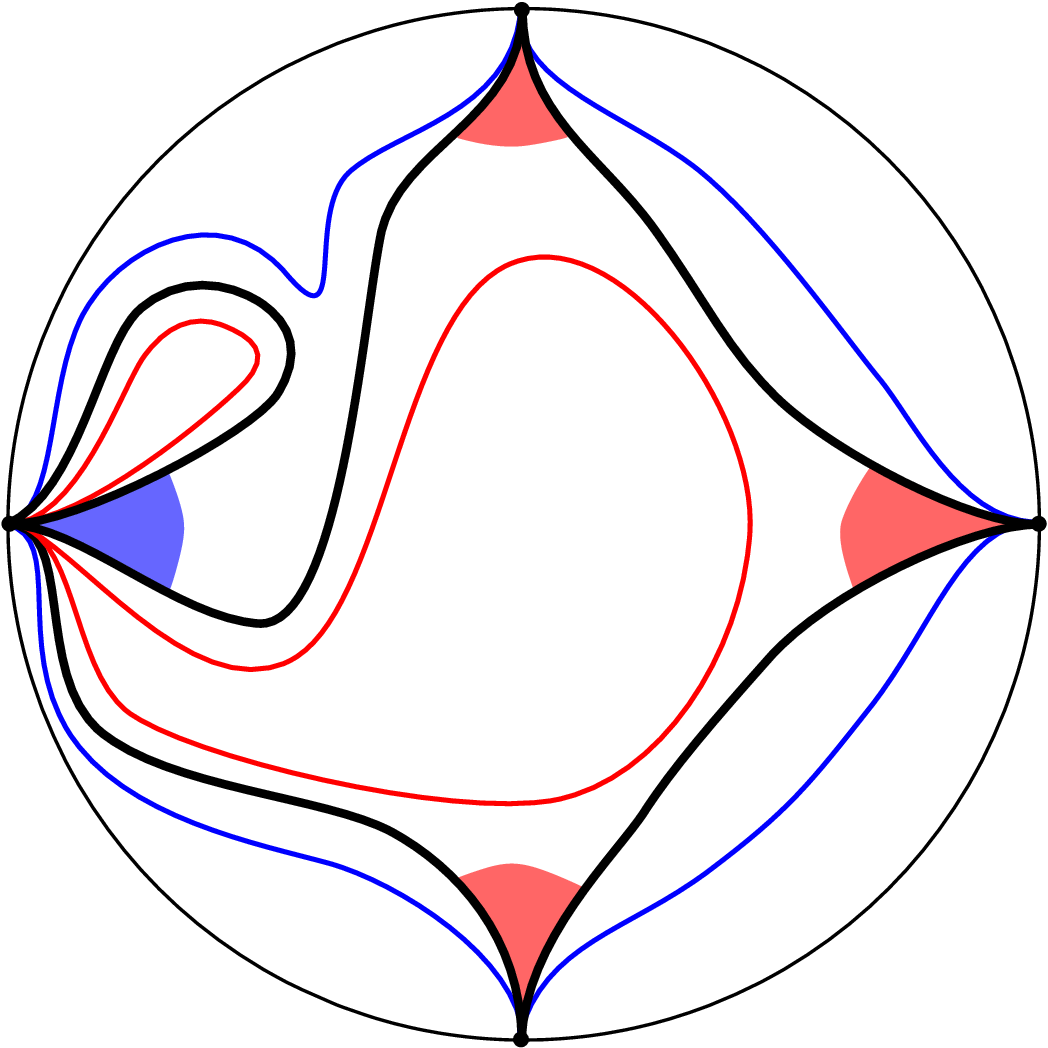}}
\end{center}
\caption{$c_R = c_T = 0$. Here we draw the values $c < 0$ in red, $c = 0$ in black, and $c > 0$ in blue. 
In (b) and (c) we draw two possible foliations coming from (a).}\label{figurinha2}
\end{figure}
We also draw the four half-branches of $p^{-1}(c)$, for $c > 0$, two in the north and two in the south. 
Now we repeat the process by considering the edge $T$. 
By Theorem~\ref{lattice_points} (as well as Remark~\ref{localcharts}), the four half-branches of $p^{-1}(0)$ at infinity associated with the edge $T$ bound two hyperbolic sectors, one in the west and one in the east, as in~(a) of Figure~\ref{figurinha2}. 
Since here $B = 3$, function $p$ assumes values $c < 0$ inside the hyperbolic sectors of the east, and $c > 0$ in the one of the west, again according to Theorem~\ref{lattice_points}. 
We also draw the $2$ half branches of $p^{-1}(c)$ associated with edge $T$ for $c< 0$ in the west and the two half branches of $p^{-1}(c)$ for $c > 0$ in the east. 
Finally, we will consider the edge $S$. 
As observed above, for every $c$, the branch of $p^{-1}(c)$ at infinity associated with $S$ will be in the west. 
In order to draw them, we will consider the parametrizations given by~\ref{branch}. 
For the branch $p^{-1}(c)$ at infinity associated with $S$, we have $\xi = (2,1)$. 
Then by Corollary~\ref{type}, it follows that the coordinate $y(t)$ of this parametrization has opposite sign and explodes to infinity when $t < 0$ and $t > 0$ are small enough. 
For the branches of $p^{-1}(0)$ associated with $T$, the border of the hyperbolic sectors in the west and east, we have $\xi = (1,0)$, so the coordinate $y(t)$ of their parametrizations tend to a finite value when $t \to 0$. 
This means that one of the half-branches of $p^{-1}(c)$ associated with $S$ is above the hyperbolic sector and one is below, as in the figure. 
Now we have to connect half-branches having the same colors. 
In order to avoid isolating either half-branches or hyperbolic sectors, and to match the colors inside the hyperbolic sectors, it is easy to conclude that up to symmetric behavior by means of changing $y$ to $-y$, there are two possibilities: the half-branch $\ell$ connects with $j$ or to $k$, see (a) of Figure \ref{figurinha2}. 
After these connections, we conclude the other connections as in (b) or (c) of Figure \ref{figurinha2}, respectively. 
In any of them, we can apply Theorem~\ref{hyperbolic} to conclude that $p$ has no Jacobian mates, because Theorem~\ref{lattice_points} guarantees a branch of an algebraic curve entering the each pair of diametricaly opposed hyperbolic sectors. 

Now in possibility~\ref{a2}, we assume, after changing $p$ by $(p - c_R)/(c_T-c_R)$, that $c_R = 0$ and $c_T = 1$. 
Theorem~\ref{lattice_points} can be applied for the edges $R$ and $T$ exactly as above to get the picture~(a) of Figure~\ref{figurinha} for these edges. 
\begin{figure}[htpb]
\begin{center}
\psfrag{s1e}{\phantom{a}}
\psfrag{s2e}{\hspace{.1cm}\footnotesize $k$}
\psfrag{s1w}{\hspace{.02cm}\footnotesize$\ell$}
\psfrag{s2w}{\phantom{a}}
\subfigure[]{\includegraphics[scale=.25]{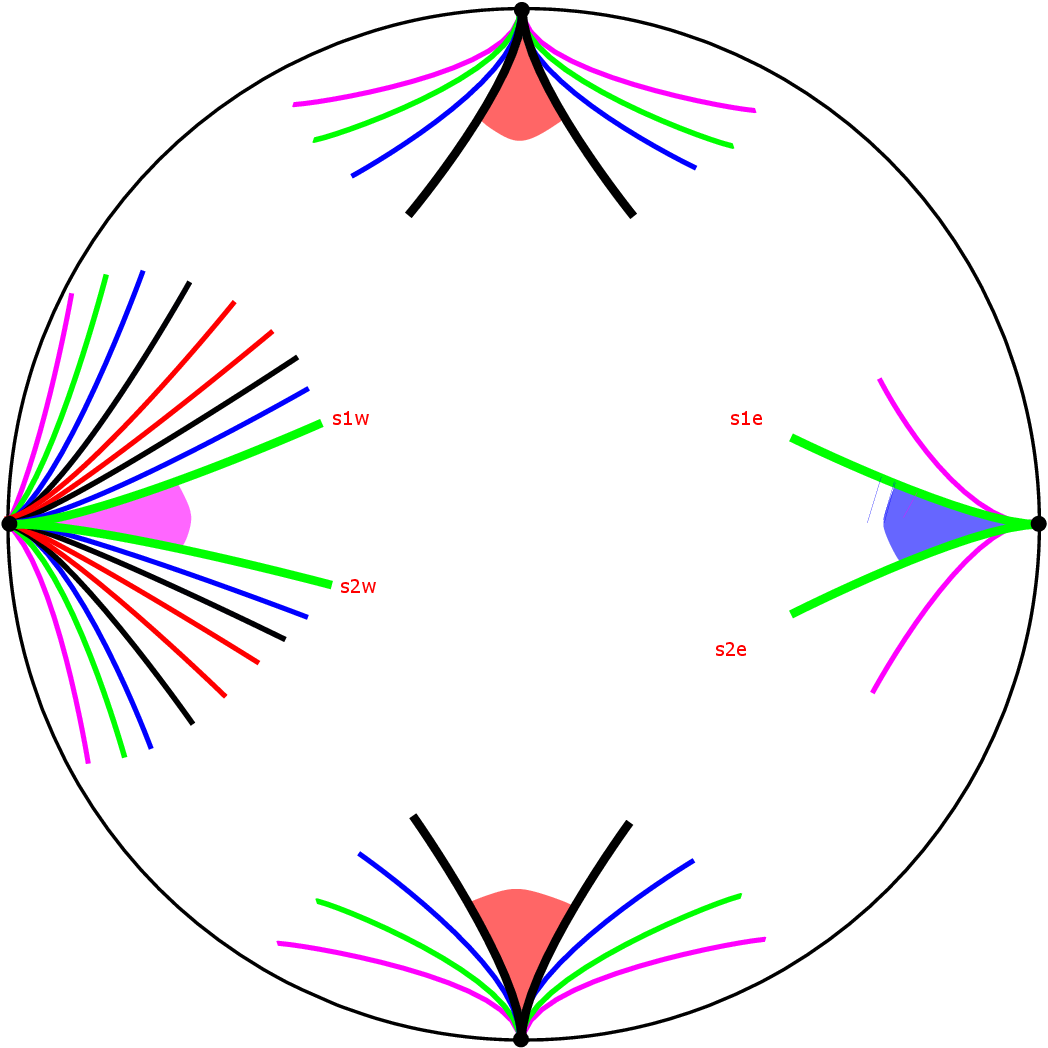}}\quad \quad \quad
\subfigure[]{\includegraphics[scale=.25]{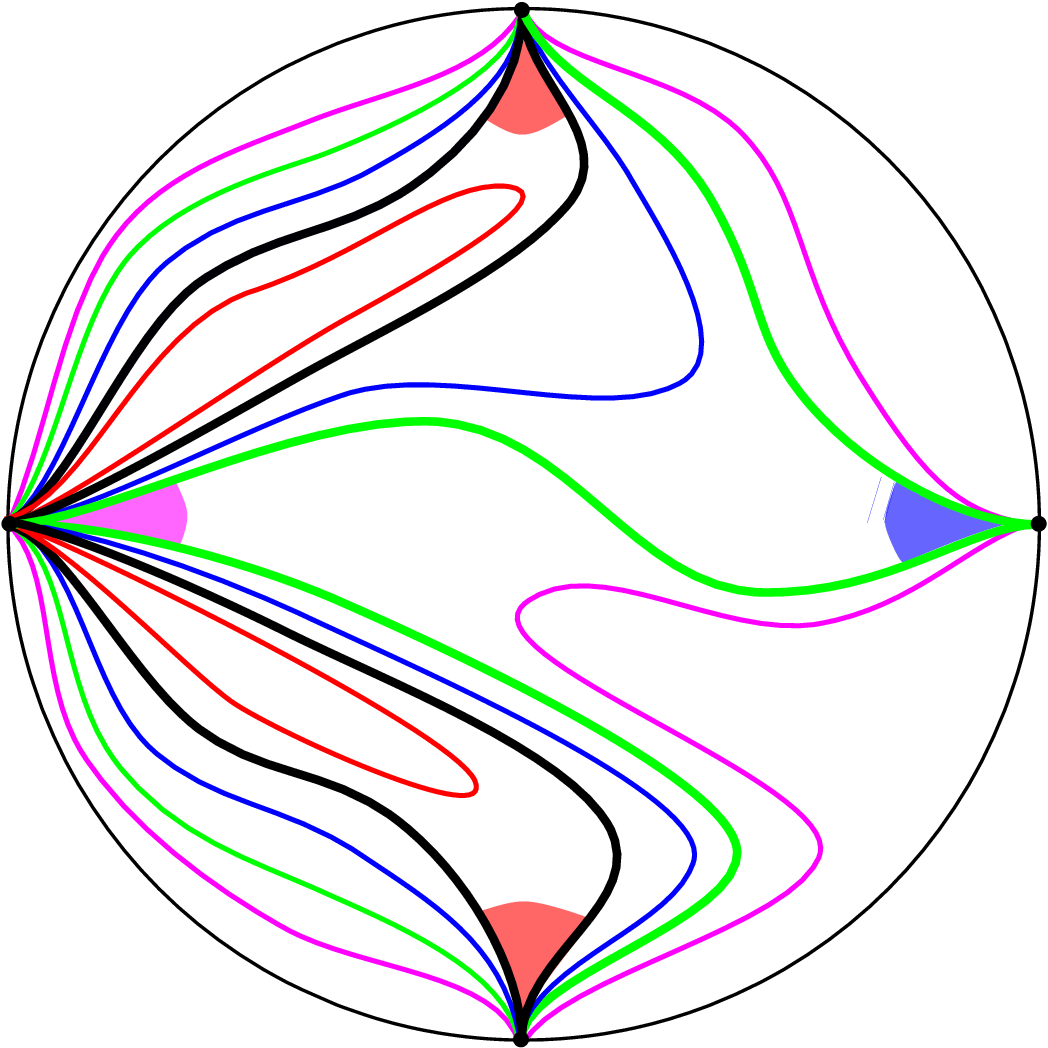}}
\end{center}
\caption{$0 = c_R < c_T = 1$. Here we draw the values $c<0$ in red, $c = 0$ in black, $0 < c < 1$ in blue, $c = 1$ in green, and $c>1$ in magenta.}\label{figurinha}
\end{figure}
Also for the edge $S$, we similarly use the expressions of the second coordinates of the parametrizations of the branches at infinity of $p^{-1}(c)$ associated with $S$ and of $p^{-1}(0)$ associated with $T$, to conclude that for each $c$, the branch of $p^{-1}(c)$ at infinity associated with $S$ has one half-branch above and one half-branch below the hyperbolic sector in the west, as in the figure. 
Here we also, see that, up to symmetry $y \to -y$, we need to connect $\ell$ to $k$. 
Then we finish the foliation as in (b) of Figure \ref{figurinha}. 
We apply Theorem~\ref{hyperbolic} to conclude that $p$ has no Jacobian mates, because Theorem~\ref{lattice_points} guarantees a branch of an algebraic curve entering the hyperbolic sectors in the west and in the east. 

\smallskip

\noindent
\emph{Case $p_+(x,y) = x^3y^3$:}
Up to exchanging $x$ and $y$, the degenerate $NP(p)$ are shown in Figure~\ref{deg03}, 
\begin{figure}[htbp]
\begin{center}
\subfigure[]{
\begin{tikzpicture}
\draw[-] (0.5,0)--(0.5,3);
\draw[-] (1,0)--(1,3);
\draw[-] (1.5,0)--(1.5,3);
\draw[-] (2,0)--(2,3);
\draw[-] (2.5,0)--(2.5,3);
\draw[-] (3,0)--(3,3);
\draw[-] (0,0.5)--(3,0.5);
\draw[-] (0,1)--(3,1);
\draw[-] (0,1.5)--(3,1.5);
\draw[-] (0,2)--(3,2);
\draw[-] (0,2.5)--(3,2.5);
\draw[-] (0,3)--(3,3);
\draw[->,thick](0,0)--(0,3.5);
\draw[->,thick](0,0)--(3.5,0);
\draw[-,red,thick](1.5,1.5)--(1.5,0);
\draw[-,green,dashed,thick](0,2)--(0.5,2)--(1.5,1.5);
\draw[-,green,dashed,thick](0,2)--(1.5,1.5);
\draw[-,green,dashed,thick](0,2.5)--(1.5,1.5);
\end{tikzpicture}
}\quad \quad \quad
\subfigure[]{ 
\begin{tikzpicture}
\draw[-] (0.5,0)--(0.5,3);
\draw[-] (1,0)--(1,3);
\draw[-] (1.5,0)--(1.5,3);
\draw[-] (2,0)--(2,3);
\draw[-] (2.5,0)--(2.5,3);
\draw[-] (3,0)--(3,3);
\draw[-] (0,0.5)--(3,0.5);
\draw[-] (0,1)--(3,1);
\draw[-] (0,1.5)--(3,1.5);
\draw[-] (0,2)--(3,2);
\draw[-] (0,2.5)--(3,2.5);
\draw[-] (0,3)--(3,3);
\draw[->,thick](0,0)--(0,3.5);
\draw[->,thick](0,0)--(3.5,0);
\draw[-,red,thick](0,1.5)--(1.5,1.5)--(1.5,0);
\node at (.73, 1.72){$R$};
\node at (1.7, .75){$S$};
\end{tikzpicture}
}
\end{center}
\caption{When $\deg h = 0$ and $p_+ = x^3 y^3$.}\label{deg03}
\end{figure}
where, in case (a), there are three possibilities. 
For each of the possibilities of case (a), we apply lemmas~\ref{lemma2} and~\ref{new1} in sequence to conclude that $p$ has no Jacobian mates. 

Now we deal with case (b). 
If $N_R$ or $N_S$ is constant, then after application of lemmas~\ref{lemma2} and~\ref{new1} in sequence, we conclude that $p$ has no Jacobian mates. 
So we assume that both are not constant. 
Then after two further applications of Lemma~\ref{lemma2}, it follows that $p$ has no Jacobian mates, or $NP(p)$ can be reduced to the Newton polygon shown in Figure~\ref{reduced2}. 
\begin{figure}[htbp]
\begin{center} 
\begin{tikzpicture}
\draw[-] (0.5,0)--(0.5,3);
\draw[-] (1,0)--(1,3);
\draw[-] (1.5,0)--(1.5,3);
\draw[-] (2,0)--(2,3);
\draw[-] (2.5,0)--(2.5,3);
\draw[-] (3,0)--(3,3);
\draw[-] (0,0.5)--(3,0.5);
\draw[-] (0,1)--(3,1);
\draw[-] (0,1.5)--(3,1.5);
\draw[-] (0,2)--(3,2);
\draw[-] (0,2.5)--(3,2.5);
\draw[-] (0,3)--(3,3);
\draw[->,thick](0,0)--(0,3.5);
\draw[->,thick](0,0)--(3.5,0);
\draw[-,red,thick](0,0.5)--(1,1.5);
\draw[-,red,thick](0.5,0)--(1.5,1);
\draw[-,green,thick](1.5,1)--(1.5,1.5)--(1,1.5);
\end{tikzpicture}
\end{center}
\caption{Reduction of case~(b) of Figure~\ref{deg03}.}\label{reduced2}
\end{figure}
Then it is simple to conclude that $\int N d\chi \leq -2$, a contradiction with Corollary~\ref{corSek}. 

\smallskip

\noindent
\emph{Case $p_+(x,y) = x^2 y^2 (x+y)^2$:} 
It is clear that, up to exchanging $x$ and $y$ we get the three possible $NP(p)$ of Figure~\ref{three}. 
\begin{figure}[htbp]
\begin{center}
\subfigure[]{
\begin{tikzpicture}
\draw[-] (0.5,0)--(0.5,3);
\draw[-] (1,0)--(1,3);
\draw[-] (1.5,0)--(1.5,3);
\draw[-] (2,0)--(2,3);
\draw[-] (2.5,0)--(2.5,3);
\draw[-] (3,0)--(3,3);
\draw[-] (0,0.5)--(3,0.5);
\draw[-] (0,1)--(3,1);
\draw[-] (0,1.5)--(3,1.5);
\draw[-] (0,2)--(3,2);
\draw[-] (0,2.5)--(3,2.5);
\draw[-] (0,3)--(3,3);
\draw[->,thick](0,0)--(0,3.5);
\draw[->,thick](0,0)--(3.5,0);
\draw[-,olive,thick](1,2)--(2,1);
\draw[-,green,thick](0,2.5)--(1,2);
\draw[-,green,thick](2.5,0)--(2,1);
\end{tikzpicture}
}
\subfigure[]{
\begin{tikzpicture}
\draw[-] (0.5,0)--(0.5,3);
\draw[-] (1,0)--(1,3);
\draw[-] (1.5,0)--(1.5,3);
\draw[-] (2,0)--(2,3);
\draw[-] (2.5,0)--(2.5,3);
\draw[-] (3,0)--(3,3);
\draw[-] (0,0.5)--(3,0.5);
\draw[-] (0,1)--(3,1);
\draw[-] (0,1.5)--(3,1.5);
\draw[-] (0,2)--(3,2);
\draw[-] (0,2.5)--(3,2.5);
\draw[-] (0,3)--(3,3);
\draw[->,thick](0,0)--(0,3.5);
\draw[->,thick](0,0)--(3.5,0);
\draw[-,olive,thick](1,2)--(2,1);
\draw[-,red,thick](2,1)--(2,0);
\draw[-,green,thick](0,2.5)--(1,2);
\node at (.8, 2.33){$R$};
\node at (1.68, 1.72){$S$};
\node at (2.17, 0.7){$T$};
\end{tikzpicture}
} 
\subfigure[]{
\begin{tikzpicture}
\draw[-] (0.5,0)--(0.5,3);
\draw[-] (1,0)--(1,3);
\draw[-] (1.5,0)--(1.5,3);
\draw[-] (2,0)--(2,3);
\draw[-] (2.5,0)--(2.5,3);
\draw[-] (3,0)--(3,3);
\draw[-] (0,0.5)--(3,0.5);
\draw[-] (0,1)--(3,1);
\draw[-] (0,1.5)--(3,1.5);
\draw[-] (0,2)--(3,2);
\draw[-] (0,2.5)--(3,2.5);
\draw[-] (0,3)--(3,3);
\draw[->,thick](0,0)--(0,3.5);
\draw[->,thick](0,0)--(3.5,0);
\draw[-,olive,thick](1,2)--(2,1);
\draw[-,red,thick](0,2)--(1,2);
\draw[-,red,thick](2,1)--(2,0);
\node at (.7, 2.2){$R$};
\node at (1.68, 1.72){$S$};
\node at (2.17, 0.7){$T$};
\end{tikzpicture}
}
\end{center}
\caption{When $\deg h = 0$ and $p_+ = x^2 y^2 (x + y)^2$}\label{three}
\end{figure}
Here $p$ is degenerate on the olive edge. 
In case~(a), Lemma~\ref{new1} applies to show that $p$ has no Jacobian mates. 
In case~(b), by the same Lemma~\ref{new1}, we can assume that $N_S$ and $N_T$ are not constant, so by Theorem~\ref{lattice_points} it follows that $N_S$ is of type $0 {\bf b_S} 2$ and $N_T$ is of type $0 {\bf b_T} 2$, with $b_S, b_T \leq 2$. 
By Lemma~\ref{bounded}, $N_R \equiv 1$ so by Corollary~\ref{corSek} it follows that $\int N_S + N_T d\chi = 0$, forcing that $b_S = b_T = 2$. 
By letting $c_S$ and $c_T$ be the discontinuity points of $N_S$ and $N_T$, respectively, it follows that $p$ is of type $1{\bf 5}5$ if $c_S = c_T$, and of type $1{\bf 3} 3 {\bf 5} 5$ if $c_S \neq c_T$. 
In both cases, Corollary~\ref{one_three} applies to conclude that $p$ has no Jacobian mates. 

Now we deal with case~(c). 
By considering Lemma~\ref{new1} and up to symmetry, only three cases remain: (i)~$N_R$ is constant while $N_S$ and $N_T$ are not; (ii)~$N_S$ is constant while $N_R$ and $N_T$ are not; and (iii)~$N_R$, $N_S$, and $N_T$ are all not constant. 
In all the cases, $\int N_R, \int N_S, \int N_T \leq 0$ from Theorem~\ref{lattice_points}. 
Also, from the same theorem, when not constant, $N_i$ is of type $0{\bf b_i} 2$, with $b_i \leq 2$, for $i=R, S, T$. 
So, by considering Corollary~\ref{corSek} and observing that the image of $p$ cannot be a closed half-interval, it follows that, in case (i), we must have $N_R \equiv 1$ and $N_S$ and $N_T$ of type $0{\bf 2} 2$. 
Then $p$ is of type $1{\bf 5} 5$ or $1{\bf 3}3{\bf 5} 5$ depending on whether the discontinuity points of $N_S$ and $N_T$ are the same or not. 
In both cases, Corollary~\ref{one_three} applies to show that $p$ has no bifurcation values. 
A completely analogous reason can be done in case~(ii). 
Finally, in case~(iii), it follows that the image of $p$ is a closed half-interval because $b_R, b_S, b_T>0$. 
\end{proof}

\section{The example}\label{explicitizing}
We consider the polynomial submersion of degree $7$, $p_7$, presented in \cite{BF}. 
Here we solve one of the questions raised in that paper: we present a Jacobian mate for~$p_7$. 

Precisely, in our discussion below, we consider the translation 
$$
p = p_7 +1 = (1 + x + x^2 y) (1 + y + 2 x y + x^2 y^2). 
$$ 
It is easy to conclude that $p$ is a submersion such that $p^{-1}(0)$ is disconnected. 
So if $p$ has a Jacobian mate $q$, the map $(p,q)$ is non-injective by Section~\ref{45601}. 

We begin by considering the following auxiliary polynomials: 
$$
m = (1 + x + x^2 y),\quad w = (1 + x y) m. 
$$ 
Then we define 
$$
\begin{aligned}
q & = \frac{1}{m^2} \Big(2 (12 + 31 w) p^3 + (-6 - 68 w + 5 w^2 + 22 w^3) p^2 \\ 
& \phantom{={}} + 2 (7 + 11 w - 28 w^2 + 17 w^3 + w^4) w p  - (7 - 14 w + 7 w^2 + 11 w^4 - 10 w^5) w^2  \Big), 
\end{aligned}
$$ 
and 
$$
M_1 = \frac{-p + 5 p^2 - 6 p w^2 + 4 w^3 - 3 w^4}{m}, \quad M_2 = p (w^2 - p). 
$$
\begin{lemma}\label{qmm}
$q$, $M_1$ and $M_2$ are polynomials. 
\end{lemma}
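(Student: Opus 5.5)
The plan is a direct divisibility argument in the polynomial ring $\R[x,y]$. Everything rests on the factorizations of $p$ and $w$ through $m$. By definition
$$
p=(1+x+x^2y)(1+y+2xy+x^2y^2)=m\,n,\qquad w=(1+xy)\,m,
$$
where $n:=1+y+2xy+x^2y^2$. So both $p$ and $w$ are polynomial multiples of $m$. Consequently, for integers $a,b\ge 0$ the product $p^a w^b$ equals $m^{a+b}n^a(1+xy)^b$, and it is divisible by $m^{a+b}$ in $\R[x,y]$. Divisibility in $\R[x,y]$ here is just an explicit factorization, so no irreducibility of $m$ or unique-factorization argument is needed.

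For $M_2=p(w^2-p)$ there is nothing to prove: it is a product of polynomials.

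For $M_1$, I will inspect the numerator $-p+5p^2-6pw^2+4w^3-3w^4$. Every summand is a polynomial coefficient times a monomial $p^aw^b$ with $a+b\ge 1$, so each one is divisible by $m$. Concretely, the quotient is
$$
M_1=-n+5mn^2-6m^2n(1+xy)^2+4m^2(1+xy)^3-3m^3(1+xy)^4,
$$
which is visibly a polynomial.

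For $q$, I will expand the bracket as a polynomial in $p$ and $w$ and check that every monomial $p^aw^b$ occurring in it has $a+b\ge 2$. The four groups of terms are:
\begin{itemize}
\item $2(12+31w)p^3$, whose monomials have total degree $3$ or $4$;
\item $(-6-68w+5w^2+22w^3)p^2$, whose monomials have total degree between $2$ and $5$;
\item $2(7+11w-28w^2+17w^3+w^4)wp$, whose monomials have total degree between $2$ and $6$;
\item $(7-14w+7w^2+11w^4-10w^5)w^2$, whose monomials have total degree at least $2$.
\end{itemize}
Hence each term is divisible by $m^2$. Dividing term by term, that is, substituting $p=mn$ and $w=(1+xy)m$ and cancelling $m^2$, expresses $q$ as an explicit polynomial in $m$, $n$ and $1+xy$.

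The only step requiring care is the bookkeeping for $q$: confirming that no monomial in the bracket is a constant or of the form $p$ or $w$ alone. This is immediate from the displayed expression, because the polynomials in $w$ are always multiplied by $p^3$, $p^2$, $wp$ or $w^2$. So I do not expect a genuine obstacle.
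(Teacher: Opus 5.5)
Your proof is correct and takes the same approach as the paper, which simply observes that $m$ divides both $p$ and $w$. Your check that every monomial $p^aw^b$ in the numerator of $q$ has $a+b\ge 2$ (and $a+b\ge 1$ for $M_1$) spells out the bookkeeping the paper leaves implicit.
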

\begin{proof}
$M_2$ is clearly polynomial. 
That $q$ and $M_1$ are polynomials follows because $m$ divides both $p$ and $w$. 
\end{proof}

\begin{lemma}
$M_1$ and $M_2$ have no common zeros. 
\end{lemma}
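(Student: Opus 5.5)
Suppose $M_1(a)=M_2(a)=0$ at some $a\in\R^2$; we derive a contradiction. Since $M_2=p(w^2-p)$, either $p(a)=0$ or $p(a)=w(a)^2$.

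\emph{Case $p(a)=0$.} Here $M_1(a)=0$ becomes $(4w^3-3w^4)/m=0$ at $a$. We first make this rigorous.
\begin{itemize}
\item Because $m\mid p$ and $m\mid w$, write $p=m\,P_2$ with $P_2=1+y+2xy+x^2y^2$, and $w=m\,W$ with $W=1+xy$.
\item Then
$$
M_1=-P_2+5mP_2^2-6mP_2W^2+4m^2W^3-3m^3W^4 .
$$
\item $p(a)=0$ means $m(a)=0$ or $P_2(a)=0$.
\end{itemize}
If $m(a)=0$, then $M_1(a)=-P_2(a)$, so $P_2(a)=0$ as well. Note $P_2=(1+xy)^2+y=W^2+y$. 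The system $m=P_2=0$ then gives a contradiction with $p$ being a submersion, since a common zero of $m$ and $P_2$ is a critical point of $p=mP_2$. If instead $m(a)\neq0$ and $P_2(a)=0$, then $M_1(a)=m^2W^3(4-3mW)=0$. This requires $W(a)=0$ or $m(a)W(a)=4/3$. If $W=0$, then $P_2=y=0$, but $xy=-1$ is incompatible with $y=0$. If $mW=4/3$, we must combine it with $P_2=W^2+y=0$ and $m=1+x+x^2y$. We substitute $y=-W^2$ and $x=(W-1)/y$ where $y\ne0$, reduce to a single polynomial in $W$, and check that it has no real roots.

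\emph{Case $p(a)=w(a)^2\neq0$.} Here $M_1=0$ reads
$$
-p+5p^2-6p^2+4w^3-3w^4=-p-p^2+4w^3-3w^4=0 .
$$
With $p=w^2$ this becomes $-w^2-w^4+4w^3-3w^4=-w^2(1-4w+4w^2)=-w^2(2w-1)^2$. This is exactly the ``divided by $m$'' numerator, so we must be careful: $M_1$ equals this quantity divided by $m$. If $m(a)\ne0$, we get $w(a)=0$ (so $p(a)=0$, which is the previous case) or $w(a)=1/2$, $p(a)=1/4$. If $m(a)=0$, then $p(a)=w(a)=0$, again the previous case.

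So the remaining subcase is $mW=1/2$ and $mP_2=1/4$, hence $P_2=W/2$, i.e.\ $W^2+y=W/2$. Using $y=W/2-W^2$, $x=(W-1)/y$, and $m=1+x+x^2y$, we reduce $mW=1/2$ to a one-variable polynomial in $W$ and show it has no real root, for instance by Sturm's theorem or a discriminant/sign argument.

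The main obstacle is these two one-variable elimination checks; everything else is bookkeeping. The point $y=0$ must be handled separately, where $W=1$ and the equations are checked directly.
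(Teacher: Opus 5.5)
Your case split is exactly the paper's. $M_2=0$ forces $p=0$ or $p=w^2\neq0$. The subcase $m=0$ is excluded because a common zero of $m$ and $P_2$ would be a critical point of $p=mP_2$. For $p=0$, $m\neq0$ you reduce to $w(4-3w)=0$, and for $p=w^2$ to $w^2(2w-1)^2=0$.

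What is missing is the only nontrivial content of the lemma: you must show that $P_2=0$ is incompatible with $w=4/3$, and that $p=w^2$ is incompatible with $w=1/2$. You only promise these two eliminations, and you anticipate Sturm-type arguments that are not needed, because both collapse to $0=1$.

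With $W=1+xy$ one has the identities $m=1+xW$ and $w=mW=1+xP_2$.

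\emph{First check.} The identity $w=1+xP_2$ gives $w=1$ on $\{P_2=0\}$. So there $M_1=m^2W^3(4-3mW)=m^2W^3=1/m\neq0$. This is precisely the paper's remark $4-3w=1-3xp/m=1$.

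\emph{Second check.} The conditions $w=1/2$ and $p=1/4$ give $xP_2=-1/2$ and $mP_2=1/4$. Hence $x=-2m$, and then $m=1+xW=1-2w=0$, contradicting $p\neq0$. In your parametrization $y=W/2-W^2$, $x=(W-1)/y$, this is the computation $mW=-W/(1-2W)$, so $mW=1/2$ reads $-2W=1-2W$.

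The paper handles this second case differently, by the explicit B\'ezout identity $4r(p-w^2)+s(2w-1)\equiv1$. That identity certifies the incompatibility without any parametrization or division.

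A minor slip: the third term of your expansion of $M_1$ should be $-6m^2P_2W^2$, not $-6mP_2W^2$. This is harmless, because you only use the expansion where $m=0$ or $P_2=0$.
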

\begin{proof}
If $M_2 = 0$, we have (i) $p = w^2 \neq 0$ or (ii) $p = 0$. 

If (i) holds, then $m \neq 0$ and $m M_1 = - w^2 (2 w - 1)^2$. 
We \emph{claim that $p - w^2 = 0$ and $2 w - 1 = 0$ have no common zeros}. 
Indeed, by taking 
$$
r = 1 + 2 x y, \quad s = 1 + 2 x - 4 y + 2 x y + 8 x^2 y + 10 x^3 y^2 + 4 x^4 y^3, 
$$
we have 
$$
4 r (p - w^2) + s (2 w - 1) \equiv 1, 
$$
so the claim is proved. 

On the other hand, if (ii) holds, then either $m \neq 0$ or $m = 0$. 
If $m \neq 0$, we have 
\begin{equation}\label{eq}
y + (1 + x y)^2 = 1 + y + 2 x y + x^2 y^2 = 0. 
\end{equation} 
Moreover, $m M_1 =  w^3 (4 - 3 w)$, so that if $M_1 = 0$, then $w (4 - 3 w) = 0$. 
But $4 - 3 w = 1 - 3 x p/m= 1$, so $w = 0$, forcing that $1 + x y = 0$. 
Hence, by \eqref{eq}, $y = 0$ and we get a contradiction. 
Finally, if $m=0$, then $p = - m M_1$, and so $M_1$ can not be zero because $p$ is a submersion. 
\end{proof}

Next lemma follows by the rules of differentiation. 
\begin{lemma}\label{rty}
$$
J\Big(p,\frac{1}{m^2}\Big) = \frac{2 (2 w - 1)}{m^2}, \quad J(p,w) = - m p.
$$ 
\end{lemma}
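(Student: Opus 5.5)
The plan is to reduce both identities to Jacobians of the two factors of $p$ and then check two or three small polynomial identities directly. Write $J(f,g)=f_xg_y-f_yg_x$, which is bilinear, antisymmetric and satisfies the Leibniz rule $J(fg,h)=fJ(g,h)+gJ(f,h)$. Put
$$
n=1+y+2xy+x^2y^2, \qquad k=1+xy,
$$
so that $p=mn$, $w=km$, and $n=y+k^2$. The whole proof rests on the following three elementary Jacobians, which we compute first:
$$
J(m,n)=2w-1,\qquad J(m,k)=m-1,\qquad J(n,k)=-y .
$$

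For the first, use $m_x=1+2xy$, $m_y=x^2$, $n_x=2y(1+xy)$ and $n_y=1+2x+2x^2y$. Expanding gives $J(m,n)=1+2x+2xy+4x^2y+2x^3y^2$. Expanding $2w-1=2(1+xy)(1+x+x^2y)-1$ gives exactly the same polynomial. For the other two, $k_x=y$ and $k_y=x$, so $J(f,k)=xf_x-yf_y$. This gives $J(m,k)=x+x^2y=m-1$ and $J(n,k)=-y$.

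For the first formula of the lemma, the chain rule gives $J(p,m^{-2})=-2m^{-3}J(p,m)$. Leibniz, together with $J(m,m)=0$, gives $J(mn,m)=mJ(n,m)=-m(2w-1)$. Hence $J(p,m^{-2})=2(2w-1)/m^2$.

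For the second formula, expand by Leibniz in both slots:
$$
J(mn,km)=m\bigl(kJ(n,m)+mJ(n,k)\bigr)+nm\,J(m,k)=m\bigl(k(1-2w)-my+n(m-1)\bigr).
$$
So it suffices to show $k(1-2w)-my+n(m-1)=-mn$, i.e.
$$
k-2k^2m-my+2mn-n=0 .
$$
Substitute $n=y+k^2$. Then $2mn-2k^2m=2my$, and the expression reduces to $k-k^2+my-y=-xyk+y(x+x^2y)$, which is $0$.

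There is no real obstacle. The only step needing care is the bookkeeping in the expansion of $J(m,n)$. Keeping everything expressed through $m$, $n$, $k$ rather than raw monomials avoids most of the algebra.
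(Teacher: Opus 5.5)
Your proof is correct and takes the same route as the paper, which only says the lemma ``follows by the rules of differentiation.'' Your factorization $p=mn$, $w=km$ and the three auxiliary Jacobians $J(m,n)=2w-1$, $J(m,k)=m-1$, $J(n,k)=-y$ supply the details of that computation; I checked each of them, the chain-rule step, and the final cancellation $k-k^2+my-y=0$.
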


\begin{lemma}
$$
J(p,q) = -2 \left(M_1^2 + 6 M_2^2\right)
$$
\end{lemma}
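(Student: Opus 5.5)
Since $q$ is written as $m^{-2}$ times a polynomial in $p$ and $w$, I would compute $J(p,q)$ using the fact that $J(p,\cdot)$ is a derivation that kills every function of $p$. Put
$$
Q(p,w)=2(12+31w)p^3+(-6-68w+5w^2+22w^3)p^2+2(7+11w-28w^2+17w^3+w^4)wp-(7-14w+7w^2+11w^4-10w^5)w^2,
$$
so that $q=m^{-2}Q(p,w)$. Then
$$
J(p,q)=J\Big(p,\frac1{m^2}\Big)\,Q+\frac1{m^2}\,\frac{\partial Q}{\partial w}(p,w)\,J(p,w),
$$
because $J(p,p)=0$. Substituting the two formulas of Lemma~\ref{rty} gives
$$
J(p,q)=\frac{1}{m^2}\Big(2(2w-1)\,Q(p,w)-m\,p\,\frac{\partial Q}{\partial w}(p,w)\Big).
$$
The whole computation now takes place in the ring generated by $p$, $w$ and $m^{\pm1}$.

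Next I would write the right-hand side of the claimed identity in the same form. Since $M_2=p(w^2-p)$ and $mM_1=-p+5p^2-6pw^2+4w^3-3w^4$, we have
$$
-2(M_1^2+6M_2^2)=\frac{-2}{m^2}\Big((-p+5p^2-6pw^2+4w^3-3w^4)^2+6m^2p^2(w^2-p)^2\Big).
$$
After multiplying both sides by $m^2$, the statement becomes an identity between polynomials in $p$, $w$ and $m$.

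The difficulty is that the two sides do not have the same degree in $m$: the left side is linear in $m$, through the term $mp\,\partial_wQ$, while the right side contains $m^2$. So $p$, $w$, $m$ cannot be treated as independent variables. I would use the algebraic relations among them, the main one being the one the paper already used, $4-3w=1-3xp/m$, that is, $3xp=m(3w-3)$. More directly, $w=(1+xy)m$ and $p=m(1+y+2xy+x^2y^2)$, which yields relations such as $p-w(1+xy)=my$. The cleanest route is to stop avoiding $x$ and $y$: divide by the common power of $m$, and check the polynomial identity $2(2w-1)Q-mp\,\partial_wQ+2\big((mM_1)^2+6m^2M_2^2\big)=0$ after substituting $p=m\,n$ and $w=m(1+xy)$, where $n=1+y+2xy+x^2y^2$ and $m=1+x+x^2y$. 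At that point it is an explicit identity in $\R[x,y]$, to be verified by expansion, most practically with computer algebra.

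The main obstacle is that verification: the degree is high, around $(x,y)$-degree $40$. To keep it manageable I would first factor out the powers of $m$. Both sides are divisible by $m^4$, since $p$ and $w$ both carry a factor $m$, so after substituting one compares polynomials of the form $m^{k}\cdot(\text{polynomial in }n,\,1+xy,\,m)$. I would then reduce modulo the single relation linking $m$, $n$ and $u=1+xy$, namely $y=n-u^2$, which gives $m=1+x+x(xy)=1+x u$. This expresses everything through $x$, $u$ and $n$ freely, with $y=n-u^2$ and $xy=u-1$, and each side can be expanded and compared coefficient by coefficient.
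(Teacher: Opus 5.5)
Your first step matches the paper's: Lemma~\ref{rty} reduces the claim to $2(2w-1)Q-mp\,Q_w=-2\big((mM_1)^2+6m^2M_2^2\big)$. You are also right that this cannot hold for independent $p,w,m$, since the right side has the $m^2$-part $-12m^2p^2(w^2-p)^2$ while the left side is linear in $m$. Your fallback, expanding the identity in $\R[x,y]$ by computer, would prove the lemma, but both shortcuts you propose for making that expansion manageable fail.

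First, the common factor is only $m^2$, not $m^4$. Since $mM_1=-p+(\text{higher order in }p,w)$, the right side is $m^2\big(-2n^2+m(\cdots)\big)$. Moreover $m\nmid n$, because $n-u=my$ with $u=1+xy$.

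Second, and more seriously, $x$, $u$ and $n$ are \emph{not} free variables: $y=n-u^2$ and $xy=u-1$ force $x(n-u^2)=u-1$. The substitution $p=mn$, $w=mu$, $m=1+xu$ is an injective homomorphism $\R[p,w,m]\to\R[x,u,n]$. So it cannot create the cancellation that fails for independent $p,w,m$, and comparing coefficients in $x,u,n$ would show a nonzero discrepancy, a multiple of $1-u+x(n-u^2)$.

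What is missing is the single relation among $p,w,m$ themselves, which is exactly what the paper uses: $mp=p-w+w^2$, i.e.\ $(m-1)p=w(w-1)$. You were one line away. Your relation $xp=m(w-1)$, together with $xw=m(m-1)$ (from $m=1+xu$), eliminates $x$ and gives precisely this relation.

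The paper then verifies in $\R[p,w,m]$ that $m^2J(p,q)+2\big((mM_1)^2+6m^2M_2^2\big)=2(mp-p+w-w^2)\,G(p,w,m)$, a small division in three variables. Equivalently, since $J(p,w)=-mp\not\equiv0$ makes $p,w$ algebraically independent, you can substitute $m=(p-w+w^2)/p$ and check that the expression vanishes in $\R(p,w)$. Either way, this replaces your degree-$40$ expansion in $x,y$ (still a valid, if heavy, proof) by a low-degree computation that also explains why the identity holds.
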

\begin{proof} 
By writing 
$$
q = \frac{1}{m^2} Q(p,w), 
$$ 
it follows from Lemma \ref{rty} that 
$$
J(p,q) = \frac{2 (2 w - 1) Q(p,w) - m p Q_w(p,w)}{m^2}. 
$$ 
Then, after a lengthy computation, we obtain 
\begin{equation}\label{equ}
J(p,q) + 2 \left(M_1^2 + 6 M_2^2 \right) =\frac{2 (m p-p + w- w^2) G(p,w,m)}{m^2}, 
\end{equation}
for a suitable polynomial $G(p,w,m)$. 

But  $m p=p-w+w^2$, by the expressions of $m$, $p$ and $w$. 
Therefore, since the left side of \eqref{equ} is polynomial by Lemma \ref{qmm}, the proof is complete. 
\end{proof}

Gathering the results of this section, we conclude that 
\begin{corollary}\label{ultimo} 
The map $F = (p,q): \R^2 \to \R^2$ is a non-injective polynomial map satisfying \ref{1}. 
The degree of $p$ is $7$. 
\end{corollary}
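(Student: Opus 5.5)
The corollary follows by assembling the lemmas of this section with the fiber-connectedness criterion of Section~\ref{45601}. There are four things to check: that $p$ and $q$ are polynomials, that $p$ has degree $7$, that $JF$ never vanishes, and that $F$ is not injective.

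\emph{Polynomiality and degree.} By Lemma~\ref{qmm}, $q$ is a polynomial, so $F=(p,q)$ is a polynomial map. For the degree, expand $p=(1+x+x^2y)(1+y+2xy+x^2y^2)$. The top-degree part is the product of the top-degree parts of the two factors, namely $x^2y\cdot x^2y^2=x^4y^3$. Hence $\deg p=7$. Since $p=p_7+1$, this agrees with the degree of the example of \cite{BF}.

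\emph{Nowhere vanishing Jacobian.} By the last lemma, $JF=J(p,q)=-2\left(M_1^2+6M_2^2\right)$. Both $M_1$ and $M_2$ are real polynomials, so $M_1^2+6M_2^2\ge 0$, with equality exactly at common real zeros of $M_1$ and $M_2$. The preceding lemma says there are no such common zeros. Therefore $JF<0$ everywhere, and \eqref{1} holds.

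\emph{Non-injectivity.} I need $p^{-1}(0)$ to be disconnected. Since $p=m\cdot\bigl(y+(1+xy)^2\bigr)$, the zero set of $p$ is the union of $\{m=0\}$ and $\{y+(1+xy)^2=0\}$.
\begin{itemize}
\item On $m=0$, that is $x^2y=-1-x$, we have $x\neq 0$, so this set is the graph $y=-(1+x)/x^2$ over $x\neq 0$. It has two components, one over $x<0$ and one over $x>0$.
\item The curve $y=-(1+xy)^2$ lies in $y<0$; note $y=0$ would force $1+xy=1\neq0$, a contradiction.
\item The two curves do not meet. Indeed $p$ is a submersion (this was checked when the lemma on common zeros was proved), and a crossing point would be a singular point of $p^{-1}(0)$.
\end{itemize}
So $p^{-1}(0)$ has at least two components. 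The paper states this is "easy", and I would cite it or give the short argument above.

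Since $F$ satisfies \eqref{1} and $p$ has a disconnected fiber, the equivalence in Section~\ref{45601} shows that $F$ is not injective. (The mechanism is that $q$ is strictly monotone along each component of the fiber, and an injective polynomial map would be onto, which would force connected fibers.)

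The only step that is not immediate is the topology of $p^{-1}(0)$. Everything else is a direct citation of the lemmas already proved in this section.
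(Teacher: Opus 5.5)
Your proposal is correct and follows the paper's route. Like the paper, you combine Lemma~\ref{qmm}, the no-common-zeros lemma and the identity $J(p,q)=-2(M_1^2+6M_2^2)$ with the criterion of Section~\ref{45601}, and you also write out the disconnectedness of $p^{-1}(0)$, which the paper only calls easy.

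One correction: the no-common-zeros lemma does not check that $p$ is a submersion; it uses it. Its case $m=0$, where $M_1=-\bigl(y+(1+xy)^2\bigr)$, needs exactly the disjointness of $\{m=0\}$ and $\{y+(1+xy)^2=0\}$ that you want. So citing it for this, or deducing submersivity from $JF\neq0$, is circular. Instead, verify the disjointness directly: on $m=0$ one has $x\neq0$, $1+xy=-1/x$ and $y=-(1+x)/x^2$, hence $y+(1+xy)^2=-1/x\neq0$.
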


\section{Acknowledgments} 
FB was partially supported by the National Council for Scientific and Technological Development (CNPq), Brasil, grant 308112/2023-7. 
FB, FF and BOO were also partially supported by the National Council for Scientific and Technological Development (CNPq), Brasil, grant 403959/2023-3.

\end{document}